\documentclass{amsart}%
\usepackage{amsfonts}
\usepackage{amsmath}
\usepackage{amssymb}
\usepackage{amsthm}
\usepackage{booktabs}
\usepackage{mathtools}
\usepackage[dvipsnames]{xcolor}
\usepackage[colorlinks, linkcolor=BrickRed,citecolor=Violet]{hyperref}
\usepackage[noabbrev,nameinlink]{cleveref}
\usepackage[shortlabels]{enumitem}
\usepackage{tikz}
\usetikzlibrary{calc, shapes,arrows,positioning,cd}
\tikzset{>=stealth',
  head/.style = {fill = white, text=black},
  plaque/.style = {draw, rectangle, minimum size = 10mm},
  pil/.style={->,thick},
  junct/.style = {draw,circle,inner sep=0.5pt,outer sep=0pt, fill=black}
  }
\usepackage{varwidth}
\usepackage{subcaption}
\usepackage{multirow}
\usepackage{rotating}
\usepackage[boxsize=1em]{ytableau}

\usepackage{bbm}

\definecolor{darkred}{rgb}{0.7,0,0}
% Dark red emphasis
\newcommand{\defncolor}{\color{darkred}}
\newcommand{\defn}[1]{{\defncolor\emph{#1}}} % emphasis of a definition

\usepackage[colorinlistoftodos]{todonotes}

\makeatletter
\newtheorem*{rep@theorem}{\rep@title}
\newcommand{\newreptheorem}[2]{%
\newenvironment{rep#1}[1]{%
 \def\rep@title{#2 \ref{##1}}%
 \begin{rep@theorem}}%
 {\end{rep@theorem}}}
\makeatother

\makeatletter
\newtheorem*{rep@corollary}{\rep@title}
\newcommand{\newrepcorollary}[2]{%
\newenvironment{rep#1}[1]{%
 \def\rep@title{#2 \ref{##1}}%
 \begin{rep@corollary}}%
 {\end{rep@corollary}}}
\makeatother

\newtheorem{theorem}{Theorem}[section]
\newreptheorem{theorem}{Theorem}
\newtheorem{conjecture}[theorem]{Conjecture}
\newtheorem{corollary}[theorem]{Corollary}
\newtheorem{lemma}[theorem]{Lemma}
\newreptheorem{corollary}{Corollary}
\newtheorem{problem}[theorem]{Problem}
\newtheorem{proposition}[theorem]{Proposition}

\theoremstyle{definition}
\newtheorem{definition}[theorem]{Definition}

\newtheorem{remark}[theorem]{Remark}
\newtheorem{example}[theorem]{Example}

\newtheorem{construction}[theorem]{Construction}

\numberwithin{equation}{section}

\newcommand{\fH}{\mathfrak{H}}

\DeclareMathOperator{\CARRY}{CARRY}

%%%%%%%%%%%%%%%%%%%%%%%%%%%%%%%%%%%%%%%%%%%%%%%
\usepackage{ytableau}
\ytableausetup{smalltableaux, aligntableaux=center, textmode}

\def\unprotectedboldentry#1{\textcolor{Red}{\large{#1}}}
\def\boldentry{\protect\unprotectedboldentry}
\usepackage{tikz}
\usetikzlibrary{calc}
\usepackage{ifthen}
\newcommand{\tikztableauinternal}[1]{
    \def\newtableau{#1}
    \coordinate (x) at (-0.5,0.5);
    \coordinate (y) at (-0.5,0.5);
    \foreach \row in \newtableau {
        \foreach \entry in \row {
            \ifthenelse{\equal{\entry}{X} \OR \equal{\entry}{None}}
               {
                \node (y) at ($(y) + (1,0)$) {};
                \fill[color=gray!30] ($(y)-(0.5,0.5)$) rectangle +(1,1);
                \draw[color=gray, dotted] ($(y)-(0.5,0.5)$) rectangle +(1,1);
               }
               {
                \ifthenelse{\equal{\entry}{\boldentry X}}
                   {
                    \node (y) at ($(y) + (1,0)$) {};
                    \fill[color=gray] ($(y)-(0.5,0.5)$) rectangle +(1,1);
                    \draw ($(y)-(0.5,0.5)$) rectangle +(1,1);
                   }
                   {
                    \node (y) at ($(y) + (1,0)$) {\entry};
                    \draw ($(y)-(0.5,0.5)$) rectangle +(1,1);
                   }
               }
            }
        \coordinate (x) at ($(x)-(0,1)$);
        \coordinate (y) at (x);
        }
}

\newcommand{\tikztableau}[2][scale=0.6,every node/.style={font=\small}]{%
    \begin{tikzpicture}[#1]%
        \tikztableauinternal{#2}%
    \end{tikzpicture}%
}

\newcommand{\tikztableausmall}[1]{\tikztableau[scale=0.45,every node/.style={font=\rm\small}]{#1}}

%%%%%%%%%%%%%%%%%%%%%%%%%%%%%%%%%%%%%%%%%%%%%%%%%%%%%%%%%%%%%

\DeclareMathOperator{\dom}{dom} % get the dominant weight

\DeclareMathOperator{\maj}{maj}

\DeclareMathOperator{\rot}{rot}

\DeclareMathOperator{\antr}{antr}

\DeclareMathOperator{\SYT}{SYT}

\newcommand{\osc}{\mathsf{O}}
\newcommand{\fan}{\mathsf{F}}
\newcommand{\vac}{\mathsf{V}}
\newcommand{\tab}{\mathsf{T}}

\newcommand{\filling}{\Phi}

\newcommand{\Mfan}{\mathsf{M}_{F}}

\newcommand{\Mvactoosc}{\mathsf{M}_{V \rightarrow O}}
\newcommand{\Mvactofan}{\mathsf{M}_{V \rightarrow F}}
\newcommand{\Mosc}{\mathsf{M}_{O}}

\newcommand{\Gosc}{\mathsf{G}_{O}}
\newcommand{\Gfan}{\mathsf{G}_{F}}
\newcommand{\Gvac}{\mathsf{G}_{V}}

\newcommand{\vactofan}{\iota_{V\rightarrow F}}
\newcommand{\vactoosc}{\iota_{V\rightarrow O}}
\newcommand{\fantoosc}{\iota_{F\rightarrow O}}

\newcommand{\spincrystal}{\mathcal{B}_\mathsf{spin}}
\newcommand{\cboxcrystal}{\mathcal{C}_{\square}}
\newcommand{\bboxcrystal}{\mathcal{B}_{\square}}

\newcommand{\blowupNE}{\mathsf{blowup}^{\mathsf{NE}}}
\newcommand{\blowupSE}{\mathsf{blowup}^{\mathsf{SE}}}

\title[Growth diagrams and promotion]{Promotion and growth diagrams for fans of Dyck paths and vacillating tableaux}

\author[Pappe]{Joseph Pappe}
\address[J. Pappe]{Department of Mathematics, University of California, One Shields
Avenue, Davis, CA 95616-8633, U.S.A.}
\email{jhpappe@ucdavis.edu}

\author[Pfannerer]{Stephan Pfannerer}
\address[S. Pfannerer]{Institut f\"ur Diskrete Mathematik und Geometrie, TU Wien}
\email{stephan.pfannerer@tuwien.ac.at}

\author[Schilling]{Anne Schilling}
\address[A. Schilling]{Department of Mathematics, University of California, One Shields
Avenue, Davis, CA 95616-8633, U.S.A.}
\email{anne@math.ucdavis.edu}
\urladdr{http://www.math.ucdavis.edu/\~{}anne}

\author[Simone]{Mary Claire Simone}
\address[M. C. Simone]{Department of Mathematics, University of California, One Shields
Avenue, Davis, CA 95616-8633, U.S.A.}
\email{mcsimone@ucdavis.edu}

\date{\today}
\keywords{Crystal bases, virtual crystals, promotion, Fomin growth diagrams, Dyck paths, chord diagrams}

\makeatletter
\@namedef{subjclassname@2020}{%
  \textup{2020} Mathematics Subject Classification}
\makeatother

\subjclass[2020]{05E10, 05A19, 05E18, 15A72}

\dedicatory{Dedicated to Georgia Benkart}

\begin{document}

\begin{abstract}
We construct an injection from the set of $r$-fans of Dyck paths (resp. vacillating tableaux) of length $n$ into the set of chord diagrams on $[n]$
that intertwines promotion and rotation. This is done in two different ways, namely as fillings of promotion matrices and in terms 
of Fomin growth diagrams. Our analysis uses the fact that $r$-fans of Dyck paths and vacillating tableaux can be viewed as highest weight 
elements of weight zero in crystals of type $B_r$ and $C_r$, respectively, which in turn can be analyzed using virtual crystals. On the level of 
Fomin growth diagrams, the virtualization process corresponds to the Roby--Krattenthaler blow up construction. 
One of the motivations for finding rotation invariant diagrammatic bases such as chord diagrams is the cyclic sieving phenomenon.
Indeed, we give a cyclic sieving phenomenon on $r$-fans of Dyck paths and vacillating tableaux using the promotion action.
\end{abstract}

\maketitle

%%%%%%%%%%%%%%%%%%%%%%%%%%%%%%%%%%%%%%%%%%%%%%%%%%%%%%%%%%%%%%
\section{Introduction}

Interest in invariant subspaces goes back to Rumer, Teller and Weyl~\cite{Weyl.1932}, who studied the quantum mechanical description of
molecules. In particular, they devised diagrammatic bases for the invariant spaces. For SL$(n)$, a set of diagrams spanning the invariant space
was constructed by Cautis, Kamnitzer and Morrison~\cite{CKM.2014}, generalizing Kuperberg's webs~\cite{Kuperberg.1996} for SL$(2)$ and SL$(3)$.

The dimension of the invariant subspace of a tensor product $V^{\otimes N}$ of an irreducible representation $V$ of a Lie algebra $\mathfrak{g}$
is equal to the number of highest weight elements of weight zero in $\mathcal{B}^{\otimes N}$, where $\mathcal{B}$ is the crystal basis associated
to $V$~\cite{Westbury.2016, PfannererRubeyWestbury.2020}. The symmetric group acts on $V^{\otimes N}$ by permuting tensor positions.
By Schur--Weyl duality, this action commutes with the action of the Lie group. In particular, the symmetric group acts on the invariant space
of $V^{\otimes N}$. It was shown by Westbury~\cite{Westbury.2016} that the action of the long cycle corresponds to the action of promotion
on highest weight elements of weight zero in $\mathcal{B}^{\otimes N}$. In this setting promotion is defined using Henriques' and Kamnitzer's
commutor~\cite{HK.2006}, see~\cite{FK.2014, Westbury.2016, Westbury.2018}. Note that the full action of the symmetric group on 
invariant tensors is not yet known in general.

In general, it is desirable to have a correspondence between highest weight elements of weight zero in $\mathcal{B}^{\otimes N}$ and diagram bases,
such as chord diagrams, which intertwine promotion and rotation. For Kuperberg's webs~\cite{Kuperberg.1996}, this was achieved by
Petersen, Pylyavskyy and Rhoades~\cite{PPR.2009}, Russell~\cite{Russell.2013} and Patrias~\cite{Patrias.2019} by showing that the growth algorithm of Khovanov and
Kuperberg~\cite{KK.1999} intertwines promotion with rotation. For the vector representation of the symplectic group and the adjoint representation
of the general linear group, such a correspondence between highest weight elements of weight zero and chord diagrams which intertwines
promotion and rotation was given in~\cite{PfannererRubeyWestbury.2020}.

\smallskip

In this paper, we construct an injection from the set of $r$-fans of Dyck paths (resp. vacillating tableaux) of length $n$ into the set of chord 
diagrams on $[n]$ that intertwines promotion and rotation. There is a natural correspondence between $r$-fans of Dyck paths (resp. vacillating 
tableaux) and highest weight elements in the tensor product of the spin crystal (resp. vector representation) of type $B_r$.
We present this injection in two different ways:
\begin{enumerate}
\item as fillings of promotion matrices~\cite{Lenart.2008} (see Section~\ref{section.promotion diagrams});
\item in terms of Fomin growth diagrams~\cite{Fomin.1986,Roby.1991,Krattenthaler.2006} (see Sections~\ref{section.fomin growth}-\ref{section.Fomin RSK}).
\end{enumerate}
While the first description shows that the map intertwines promotion and rotation, the second description shows injectivity.
Our proof strategy uses virtualization of crystals (see for example~\cite{BumpSchilling.2017}) and results of~\cite{PfannererRubeyWestbury.2020}
for oscillating tableaux of weight zero (or equivalently highest weight words of weight zero for the vector representation type $C_r$):
\begin{enumerate}
\item Find a virtual crystal morphism for the spin crystals (resp. crystals for the vector representation) of type $B_r$ into the $r$-th (resp. second)
tensor power of the crystal of the vector representation of type $C_r$ (see Section~\ref{section.virtual}).
\item Use this virtualization to map an $r$-fan of Dyck paths (resp. vacillating tableau) to an oscillating tableau (see Section~\ref{section.highest weight}).
\item Show that this virtualization commutes with promotion and the filling rules.
\item Show that blowing up the filling of the growth diagram corresponds to the filling of the oscillating tableau. In this sense, the blow up on growth diagrams
is the analogue of the virtualization on crystals.
\end{enumerate}
An overview of our strategy is shown in Figures~\ref{figure.overview r fans} and~\ref{figure.overview vacillating}.

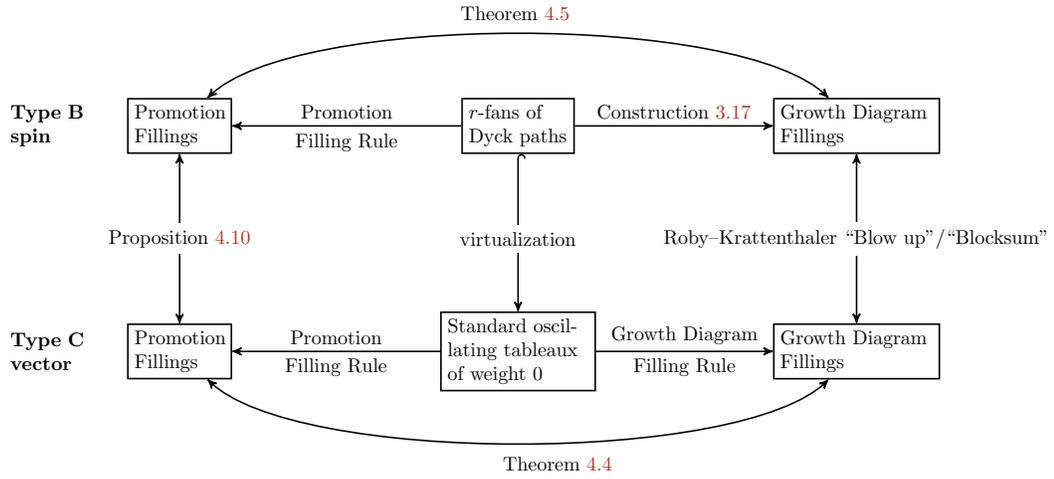
\begin{figure}
\scalebox{0.75}{
\begin{tikzpicture}[thick]

	%outer labels
	\draw(-8, 0) node[text width = 2cm]{\textbf{Type B spin}};
	\draw(-8, -4) node[text width = 2cm]{\textbf{Type C vector}};

	%vertices of commuting diagram
	\node[rectangle, draw = black, text width = 1.6cm] (topleft) at (-6,0) {Promotion Fillings};
	\node[rectangle, draw = black, text width = 1.75cm] (topmid) at (0,0) {$r$-fans of Dyck paths};
	\node[rectangle, draw = black, text width = 2.7cm](topright) at (6,0) {Growth Diagram Fillings};
	\node[rectangle, draw = black, text width = 1.6cm](botleft) at (-6,-4) {Promotion Fillings};
	\node[rectangle, draw = black, text width = 2.5cm](botmid) at (0,-4) {Standard oscillating tableaux of weight $0$};
	\node[rectangle, draw = black, text width = 2.7cm](botright) at (6,-4) {Growth Diagram Fillings};

	%edges
	\draw[<-] (topleft)--(topmid) node[midway, above] {Promotion} node[midway, below] {Filling Rule};
	\draw[->] (topmid)--(topright) node[midway, above] {Construction~\ref{construction.Burgegrowthrule}};
	\draw[<->] (topleft)--(botleft);
	\draw[right hook->] (topmid)--(botmid);
	\draw[<->] (topright)--(botright);
	\draw[<-] (botleft)--(botmid) node[midway, above] {Promotion} node[midway, below] {Filling Rule};
	\draw[->] (botmid)--(botright) node[midway, above] {Growth Diagram} node[midway, below]{Filling Rule};
	\draw[<->] (topleft) to [out= 45,in=135, looseness = .5] (topright);
	\draw[<->] (botleft) to [out= -45,in=-135, looseness = .5] (botright);

	%edge labels
	\draw (-6,-2) node[fill = white] {Proposition ~\ref{proposition.promotion fillings}};
	\draw (0,-2) node[fill=white] {virtualization};
	\draw (6, -2) node[fill=white] {Roby--Krattenthaler ``Blow up"/``Blocksum"};
	\draw (-1, 2) node[text width = 1] {Theorem~\ref{thm:fans-main}};
	\draw (-.25, -6) node[text width = 1] {Theorem~\ref{thm:osc-main}};
\end{tikzpicture}
}
\caption{Overview of strategy and results for $r$-fans of Dyck paths
\label{figure.overview r fans}}
\end{figure}

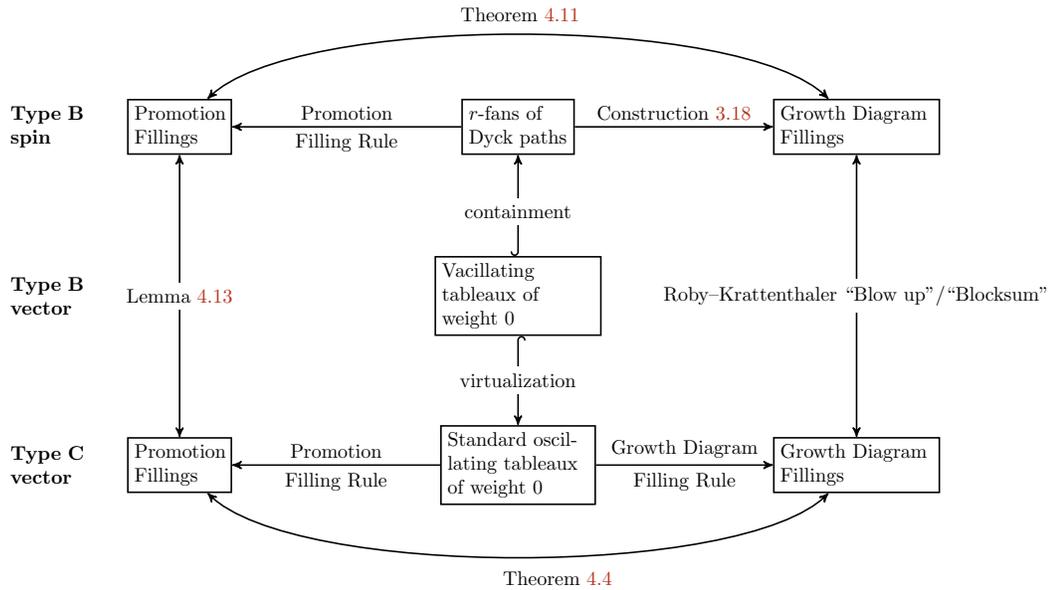
\begin{figure}
\scalebox{0.75}{
\begin{tikzpicture}[thick]

	%outer labels
	\draw(-7, 0) node[text width = 2cm]{\textbf{Type B spin}};
	\draw(-7, -3) node[text width = 2cm]{\textbf{Type B vector}};
	\draw(-7, -6) node[text width = 2cm]{\textbf{Type C vector}};

	%vertices of commuting diagram
	\node[rectangle, draw = black, text width = 1.6cm] (topleft) at (-5,0) {Promotion Fillings};
	\node[rectangle, draw = black, text width = 1.75cm] (topmid) at (1,0) {$r$-fans of Dyck paths};
	\node[rectangle, draw = black, text width = 2.7cm](topright) at (7,0) {Growth Diagram Fillings};
	\node[rectangle, draw = black, text width = 2.7cm](mid) at (1,-3) {Vacillating tableaux of weight $0$};
	\node[rectangle, draw = black, text width = 1.6cm](botleft) at (-5,-6) {Promotion Fillings};
	\node[rectangle, draw = black, text width = 2.5cm](botmid) at (1,-6) {Standard oscillating tableaux of weight $0$};
	\node[rectangle, draw = black, text width = 2.7cm](botright) at (7,-6) {Growth Diagram Fillings};

	%edges
	\draw[<-] (topleft)--(topmid) node[midway, above] {Promotion} node[midway, below] {Filling Rule};
	\draw[->] (topmid)--(topright) node[midway, above] {Construction ~\ref{construction.RSKgrowthrule}};
	\draw[<->] (topleft)--(botleft);
	\draw[right hook->] (mid)--(topmid);
	\draw[right hook->] (mid)--(botmid);
	\draw[<->] (topright)--(botright);
	\draw[<-] (botleft)--(botmid) node[midway, above] {Promotion} node[midway, below] {Filling Rule};
	\draw[->] (botmid)--(botright) node[midway, above] {Growth Diagram} node[midway, below]{Filling Rule};
	\draw[<->] (topleft) to [out= 45,in=135, looseness = .5] (topright);
	\draw[<->] (botleft) to [out= -45,in=-135, looseness = .5] (botright);

	%edge labels
	\draw (-5,-3) node[fill = white] {Lemma~\ref{lem:blocksumVac}}; %Theorem ~\ref{thm.promotion filling}};
	\draw (1,-4.5) node[fill=white] {virtualization};
	\draw (1,-1.5) node[fill=white] {containment};
	\draw (7, -3) node[fill=white] {Roby--Krattenthaler ``Blow up"/``Blocksum"};
	\draw (0, 2) node[text width = 1] {Theorem~\ref{thm:vac-main}};
	\draw (.75, -8) node[text width = 1] {Theorem~\ref{thm:osc-main}};
\end{tikzpicture}
}
\caption{Overview of strategy and results for vacillating tableaux
\label{figure.overview vacillating}}
\end{figure}

\smallskip

Having the injective map to chord diagrams gives a first step towards a diagrammatic basis for the invariant subspaces. In addition,
Fontaine and Kamnitzer~\cite{FK.2014} as well as Westbury~\cite{Westbury.2016} tied the promotion action on highest weight elements
of weight zero to the cyclic sieving phenomenon introduced by Reiner, Stanton and White~\cite{RSW.2004}. In Section~\ref{section.cyclic sieving},
we make this cyclic sieving phenomenon more concrete by providing the polynomial in terms of the energy function. For $r$-fans of Dyck paths, we 
conjecture another polynomial, which is the $q$-deformation of the number of $r$-fans of Dyck paths, to give a cyclic sieving phenomenon.
For vacillating tableaux, we give a polynomial inspired by work of Jagenteufel~\cite{Jagenteufel.2020} for a cyclic sieving phenomenon.

\smallskip

The paper is organized as follows. In Section~\ref{section.crystal}, we give a brief review of crystal bases and virtual crystals and provide
the virtual crystals for spin and vector representation of type $B_r$ into type $C_r$. We also define promotion on crystals via the crystal
commutor. In Section~\ref{section.chord}, we give the various filling rules to construct the map to chord diagrams. Section~\ref{section.main results}
is reserved for the statements and proofs of our main results.

\section*{Acknowledgements}
We wish to thank Sam Hopkins, Joel Kamnitzer, Christian Krattenthaler, Vic Reiner, Martin Rubey, Travis Scrimshaw and Bruce Westbury for discussions.
We especially thank Bruce Westbury for his communications regarding Theorem~\ref{theorem.CSP}.

SP was the recipient of a DOC Fellowship of the Austrian Academy of Sciences.
AS was partially supported by NSF grants DMS--1760329 and DMS--2053350.

%%%%%%%%%%%%%%%%%%%%%%%%%%%%%%%%%%%%%%%%%%%%%%%%%%%%%%%%%%%%%%%%%
\section{Crystal bases}
\label{section.crystal}
%%%%%%%%%%%%%%%%%%%%%%%%%%%%%%%%%%%%%%%%%%%%%%%%%%%%%%%%%%%%%%%%%

%%%%%%%%%%%%%%%%%%%%%%%%%%%%%%%%%%%%%%%%%%%%%%%%%%%%%%%%%%%%%%%%%
\subsection{Background on crystals}
Crystal bases form a combinatorial skeleton of representations of quantum groups associated to Lie algebras. They were first introduced
by Kashiwara~\cite{Kashiwara.1990} and Lusztig~\cite{Lusztig.1990}.

Axiomatically, for a given root system $\Phi$ with index set $I$ and weight lattice $\Lambda$, a \defn{crystal} is a nonempty set $\mathcal{B}$
together with maps
\begin{equation}
\begin{split}
	e_i, f_i &\colon \mathcal{B} \to \mathcal{B} \sqcup \{\emptyset\}\\
	\varepsilon_i, \varphi_i &\colon \mathcal{B} \to \mathbb{Z}\\
	\mathsf{wt} &\colon \mathcal{B} \to \Lambda
\end{split}
\end{equation}
for $i\in I$, satisfying certain conditions (see for example~\cite[Definition 2.13]{BumpSchilling.2017}). The operators $e_i$ and $f_i$ are
called \defn{raising} and \defn{lowering operators}. The map $\mathsf{wt}$ is the \defn{weight map}.
The map $\varepsilon_i$ (resp. $\varphi_i$) measures how often $e_i$ (resp. $f_i$) can be applied
to the given crystal element. For all crystals considered in this paper, we have for $b\in \mathcal{B}$
\begin{equation}
\label{equation.string length}
	\varepsilon_i(b) = \max\{k \geqslant 0 \mid e_i^k (b) \neq \emptyset \} \quad \text{and} \quad
	\varphi_i(b) = \max\{k \geqslant 0 \mid f_i^k(b) \neq \emptyset\}.
\end{equation}

An element $b \in \mathcal{B}$ is called \defn{highest weight} if $e_i(b) = \emptyset$ for all $i \in I$.

Here we define certain crystals for the root systems $B_r$ and $C_r$ explicitly.
Let $\mathbf{e}_i \in \mathbb{Z}^r$ be the $i$-th unit vector with $1$ in position $i$ and 0 everywhere else.

\begin{definition}
The \defn{spin crystal} of type $B_r$, denoted by $\mathcal{B}_{\mathsf{spin}}$, consists of all $r$-tuples $\epsilon = (\epsilon_1,\epsilon_2,\ldots,\epsilon_r)$,
where $\epsilon_i \in \{\pm\}$. The weight of $\epsilon$ is
\[
	\mathsf{wt}(\epsilon) = \frac{1}{2} \sum_{i=1}^r \epsilon_i \mathbf{e}_i.
\]
The crystal operator $f_r$ annihilates $\epsilon$ unless $\epsilon_r=+$. If $\epsilon_r=+$, $f_r$ acts on $\epsilon$ by changing $\epsilon_r$ from $+$ to $-$
and leaving all other entries unchanged.
The crystal operator $f_i$ for $1\leqslant i <r$ annihilates $\epsilon$ unless $\epsilon_i=+$ and $\epsilon_{i+1}=-$. In the latter case, $f_i$ acts on $\epsilon$
by changing $\epsilon_i$ to $-$ and $\epsilon_{i+1}$ to $+$.
Similarly, the crystal operator $e_r$ annihilates $\epsilon$ unless $\epsilon_r=-$. If $\epsilon_r=-$, $e_r$ acts on $\epsilon$ by changing $\epsilon_r$ from
$-$ to $+$.
The crystal operator $e_i$ for $1\leqslant i <r$ annihilates $\epsilon$ unless $\epsilon_i=-$ and $\epsilon_{i+1}=+$. In the latter case, $e_i$ acts on $\epsilon$
by changing $\epsilon_i$ to $+$ and $\epsilon_{i+1}$ to $-$.
\end{definition}

The crystal $\mathcal{B}_{\mathsf{spin}}$ of type $B_3$ is depicted in Figure~\ref{figure.virtual and spin}.

\begin{figure}
\includegraphics[width=11cm]{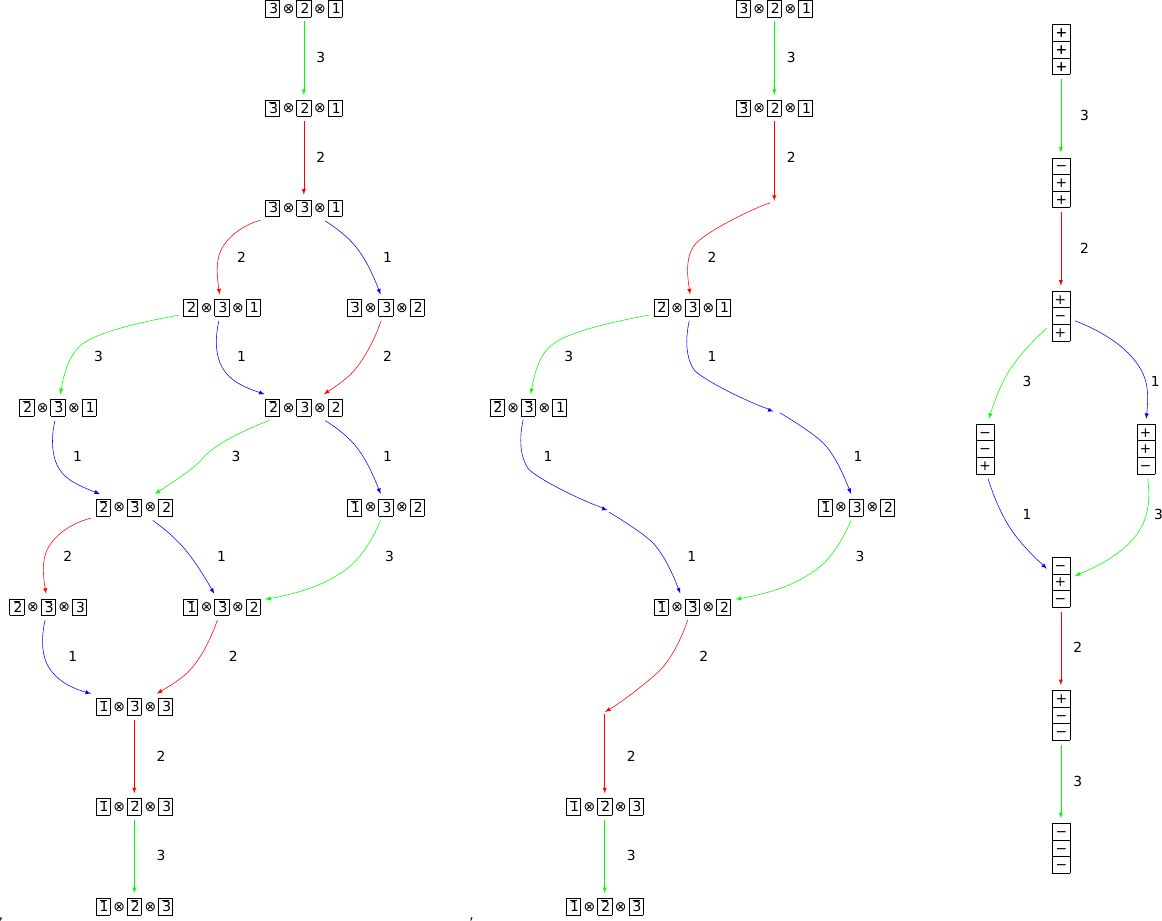}
\caption{Left: One component of the crystal $\widehat{\mathcal{V}} = \mathcal{C}_\square^{\otimes 3}$ of type $C_3$.
Middle: The virtual crystal $\mathcal{V}$ inside $\widehat{\mathcal{V}}$ of type $B_3$.
Right: The spin crystal $\mathcal{B}_{\mathsf{spin}}$ of type $B_3$.
\label{figure.virtual and spin}}
\end{figure}

\begin{definition}
\label{definition.vector crystal}
Here we define the \defn{crystals for the vector representation} of type $B_r$ and $C_r$.
\begin{enumerate}
\item
The crystal $\mathcal{C}_\square$ of type $C_r$ consists of the elements $\{1,2,\ldots,r,\overline{r},\ldots, \overline{2},\overline{1}\}$. The crystal operator
$f_i$ for $1\leqslant i<r$ maps $i$ to $i+1$, maps $\overline{i+1}$ to $\overline{i}$ and annihilates all other elements. The crystal operator $f_r$ maps
$r$ to $\overline{r}$ and annihilates all other elements. Similarly, the crystal operator $e_i$ for $1\leqslant i<r$ maps $i+1$ to $i$, maps $\overline{i}$ to
$\overline{i+1}$ and annihilates all other elements. The crystal operator $e_r$ maps $\overline{r}$ to $r$ and annihilates all other elements.
Furthermore, $\mathsf{wt}(i) = \mathbf{e}_i$ and $\mathsf{wt}(\overline{i}) = -\mathbf{e}_i$.
\item
The crystal $\mathcal{B}_\square$ of type $B_r$ consists of the elements $\{1,2,\ldots,r,0,\overline{r},\ldots, \overline{2},\overline{1}\}$. The crystal operator
$f_i$ for $1\leqslant i<r$ maps $i$ to $i+1$, maps $\overline{i+1}$ to $\overline{i}$ and annihilates all other elements. The crystal operator $f_r$ maps
$r$ to $0$, $0$ to $\overline{r}$ and annihilates all other elements. Similarly, the crystal operator $e_i$ for $1\leqslant i<r$ maps $i+1$ to $i$, maps
$\overline{i}$ to $\overline{i+1}$ and annihilates all other elements. The crystal operator $e_r$ maps $\overline{r}$ to $0$, $0$ to $r$ and annihilates all
other elements. Furthermore, $\mathsf{wt}(i) = \mathbf{e}_i$ and $\mathsf{wt}(\overline{i}) = -\mathbf{e}_i$ for $i\neq 0$ and $\mathsf{wt}(0)=0$.
\end{enumerate}
\end{definition}

\begin{figure}
\scalebox{0.6}{
\begin{tikzpicture}[>=latex,line join=bevel,]
\node (node_0) at (8.0bp,238.0bp) [draw,draw=none] {${\def\lr#1{\multicolumn{1}{|@{\hspace{.6ex}}c@{\hspace{.6ex}}|}{\raisebox{-.3ex}{$#1$}}}\raisebox{-.6ex}{$\begin{array}[b]{*{1}c}\cline{1-1}\lr{1}\\\cline{1-1}\end{array}$}}$};
  \node (node_1) at (8.0bp,162.0bp) [draw,draw=none] {${\def\lr#1{\multicolumn{1}{|@{\hspace{.6ex}}c@{\hspace{.6ex}}|}{\raisebox{-.3ex}{$#1$}}}\raisebox{-.6ex}{$\begin{array}[b]{*{1}c}\cline{1-1}\lr{2}\\\cline{1-1}\end{array}$}}$};
  \node (node_2) at (8.0bp,86.0bp) [draw,draw=none] {${\def\lr#1{\multicolumn{1}{|@{\hspace{.6ex}}c@{\hspace{.6ex}}|}{\raisebox{-.3ex}{$#1$}}}\raisebox{-.6ex}{$\begin{array}[b]{*{1}c}\cline{1-1}\lr{\overline{2}}\\\cline{1-1}\end{array}$}}$};
  \node (node_3) at (8.0bp,10.0bp) [draw,draw=none] {${\def\lr#1{\multicolumn{1}{|@{\hspace{.6ex}}c@{\hspace{.6ex}}|}{\raisebox{-.3ex}{$#1$}}}\raisebox{-.6ex}{$\begin{array}[b]{*{1}c}\cline{1-1}\lr{\overline{1}}\\\cline{1-1}\end{array}$}}$};
  \draw [blue,->] (node_0) ..controls (8.0bp,216.79bp) and (8.0bp,197.03bp)  .. (node_1);
  \definecolor{strokecol}{rgb}{0.0,0.0,0.0};
  \pgfsetstrokecolor{strokecol}
  \draw (17.0bp,200.0bp) node {$1$};
  \draw [red,->] (node_1) ..controls (8.0bp,140.79bp) and (8.0bp,121.03bp)  .. (node_2);
  \draw (17.0bp,124.0bp) node {$2$};
  \draw [blue,->] (node_2) ..controls (8.0bp,64.789bp) and (8.0bp,45.027bp)  .. (node_3);
  \draw (17.0bp,48.0bp) node {$1$};
\end{tikzpicture}}
\hspace{3cm}
\scalebox{0.6}{
\begin{tikzpicture}[>=latex,line join=bevel,]
\node (node_0) at (8.0bp,162.0bp) [draw,draw=none] {${\def\lr#1{\multicolumn{1}{|@{\hspace{.6ex}}c@{\hspace{.6ex}}|}{\raisebox{-.3ex}{$#1$}}}\raisebox{-.6ex}{$\begin{array}[b]{*{1}c}\cline{1-1}\lr{0}\\\cline{1-1}\end{array}$}}$};
  \node (node_1) at (8.0bp,314.0bp) [draw,draw=none] {${\def\lr#1{\multicolumn{1}{|@{\hspace{.6ex}}c@{\hspace{.6ex}}|}{\raisebox{-.3ex}{$#1$}}}\raisebox{-.6ex}{$\begin{array}[b]{*{1}c}\cline{1-1}\lr{1}\\\cline{1-1}\end{array}$}}$};
  \node (node_2) at (8.0bp,238.0bp) [draw,draw=none] {${\def\lr#1{\multicolumn{1}{|@{\hspace{.6ex}}c@{\hspace{.6ex}}|}{\raisebox{-.3ex}{$#1$}}}\raisebox{-.6ex}{$\begin{array}[b]{*{1}c}\cline{1-1}\lr{2}\\\cline{1-1}\end{array}$}}$};
  \node (node_3) at (8.0bp,10.0bp) [draw,draw=none] {${\def\lr#1{\multicolumn{1}{|@{\hspace{.6ex}}c@{\hspace{.6ex}}|}{\raisebox{-.3ex}{$#1$}}}\raisebox{-.6ex}{$\begin{array}[b]{*{1}c}\cline{1-1}\lr{\overline{1}}\\\cline{1-1}\end{array}$}}$};
  \node (node_4) at (8.0bp,86.0bp) [draw,draw=none] {${\def\lr#1{\multicolumn{1}{|@{\hspace{.6ex}}c@{\hspace{.6ex}}|}{\raisebox{-.3ex}{$#1$}}}\raisebox{-.6ex}{$\begin{array}[b]{*{1}c}\cline{1-1}\lr{\overline{2}}\\\cline{1-1}\end{array}$}}$};
  \draw [red,->] (node_0) ..controls (8.0bp,140.79bp) and (8.0bp,121.03bp)  .. (node_4);
  \definecolor{strokecol}{rgb}{0.0,0.0,0.0};
  \pgfsetstrokecolor{strokecol}
  \draw (17.0bp,124.0bp) node {$2$};
  \draw [blue,->] (node_1) ..controls (8.0bp,292.79bp) and (8.0bp,273.03bp)  .. (node_2);
  \draw (17.0bp,276.0bp) node {$1$};
  \draw [red,->] (node_2) ..controls (8.0bp,216.79bp) and (8.0bp,197.03bp)  .. (node_0);
  \draw (17.0bp,200.0bp) node {$2$};
  \draw [blue,->] (node_4) ..controls (8.0bp,64.789bp) and (8.0bp,45.027bp)  .. (node_3);
  \draw (17.0bp,48.0bp) node {$1$};
\end{tikzpicture}
}
\caption{Left: The crystal $\mathcal{C}_\square$ of type $C_2$.
Right: The crystal $\mathcal{B}_\square$ of type $B_2$.
\label{figure.vector}}
\end{figure}

The crystals $\mathcal{C}_\square$ for type $C_2$ and $\mathcal{B}_\square$ for type $B_2$ are depicted in Figure~\ref{figure.vector}.

A remarkable property of crystals is that they respect \defn{tensor products}. Given two crystals $\mathcal{B}$ and $\mathcal{C}$ associated to
the same root system $\Phi$, the tensor product $\mathcal{B} \otimes \mathcal{C}$ as a set is the Cartesian product $\mathcal{B} \times \mathcal{C}$.
The weight of $b \otimes c \in \mathcal{B} \otimes \mathcal{C}$ is the sum of the weights $\mathsf{wt}(b\otimes c) = \mathsf{wt}(b) + \mathsf{wt}(c)$.
Furthermore
\[
	f_i(b\otimes c) = \begin{cases} f_i(b) \otimes c & \text{if $\varphi_i(c) \leqslant \varepsilon_i(b)$,}\\
	b \otimes f_i(c) & \text{if $\varphi_i(c)> \varepsilon_i(b)$,}
	\end{cases}
\]
and
\[
	e_i(b\otimes c) = \begin{cases} e_i(b) \otimes c & \text{if $\varphi_i(c)<\varepsilon_i(b)$,}\\
	b\otimes e_i(c) & \text{if $\varphi_i(c) \geqslant \varepsilon_i(b)$.}
	\end{cases}
\]

%%%%%%%%%%%%%%%%%%%%%%%%%%%%%%%%%%%%%%%%%%%%%%%%%%%%%%%%%%%%%%%%%
\subsection{Virtual crystals}
\label{section.virtual}
Stembridge~\cite{Stembridge.2003} characterized crystals which are associated with quantum group representations for simply-laced root systems
in terms of local rules on the crystal graph. Crystals for non-simply-laced root systems can be constructed using virtual crystals, 
see~\cite[Chapter 5]{BumpSchilling.2017}.

In this paper, we utilize virtual crystals to construct Fomin growth diagrams and the promotion operators for type $B_r$ using results for type $C_r$.
Hence let us briefly review the set-up for virtual crystals. Let $X \hookrightarrow Y$ be an embedding of Lie algebras such that the fundamental
weights $\omega_i$ and simple roots $\alpha_i$ map as follows
\[
\begin{split}
	\omega_i^X &\mapsto \gamma_i \sum_{j \in \sigma(i)} \omega_j^Y,\\
	\alpha_i^X &\mapsto \gamma_i \sum_{j \in \sigma(i)} \alpha_j^Y.
\end{split}
\]
Here $\gamma_i$ is a multiplication factor, $\sigma \colon I^X \to I^Y/\operatorname{aut}$ is a bijection and $\operatorname{aut}$ is an
automorphism on the Dynkin diagram for $Y$.

Let $\widehat{\mathcal{V}}$ be an ambient crystal associated to the Lie algebra $Y$.
In~\cite[Chapter 5]{BumpSchilling.2017} it is assumed that $\widehat{\mathcal{V}}$ is a crystal for a simply-laced root system. However, in general it may be
assumed that $\widehat{\mathcal{V}}$ is a crystal corresponding to a quantum group representation (which is the case in our setting).

\begin{definition}
\label{definition.virtual}
If there is an embedding of Lie algebras $X \hookrightarrow Y$, then $\mathcal{V} \subseteq \widehat{\mathcal{V}}$ is a \defn{virtual crystal}
for the root system $\Phi^X$ if
\begin{enumerate}
\item[{\bf V1.}] The ambient crystal $\widehat{\mathcal{V}}$ is a Stembridge crystal or a crystal associated to a representation for the root system $\Phi^Y$ with
crystal operators $\widehat{e}_i$, $\widehat{f}_i$, $\widehat{\varepsilon}_i$, $\widehat{\varphi}_i$ for $i \in I^Y$ and weight function $\widehat{\mathsf{wt}}$.
\item[{\bf V2.}] If $b\in \mathcal{V}$ and $i\in I^X$, then $\widehat{\varepsilon}_j(b)$ has the same value for all $j\in \sigma(i)$ and that value is a multiple of
$\gamma_i$. The same is true for $\widehat{\varphi}_j(b)$.
\item[{\bf V3.}] The subset $\mathcal{V} \sqcup \{\emptyset\} \subseteq \widehat{\mathcal{V}} \sqcup \{\emptyset\}$ is closed under the virtual crystal
operators
\[
	e_i:= \prod_{j\in \sigma(i)} \widehat{e}_j^{\gamma_i} \quad \text{and} \quad
	f_i:= \prod_{j\in \sigma(i)} \widehat{f}_j^{\gamma_i}.
\]
Furthermore, for all $b\in \mathcal{V}$
\[
	\varepsilon_i(b) = \max\{k\geqslant 0 \mid e_i^k(b) \neq \emptyset\} \quad \text{and} \quad
	\varphi_i(b) = \max\{k \geqslant 0 \mid f_i^k(b) \neq \emptyset\}.
\]
\end{enumerate}
\end{definition}

The tensor product of two virtual crystals for the same embedding $X \hookrightarrow Y$ is again a virtual crystal
(see for example~\cite[Theorem 5.8]{BumpSchilling.2017}).

\subsubsection{Virtual crystal $B_r \hookrightarrow C_r$ spin to vector}
We will now apply the theory of virtual crystals to the embedding $B_r \hookrightarrow C_r$. In this setting $I^{C_r} = I^{B_r} = \{1,2,\ldots,r\}$,
$\sigma(i)=\{i\}$, $\gamma_i=2$ for $1\leqslant i<r$ and $\gamma_r=1$.
We consider as the ambient crystal
\[
	\widehat{\mathcal{V}} = \mathcal{C}_\square^{\otimes r}.
\]
Define an ordering $<$ on the set $[r] \cup [\bar{r}]$ as follows:
\[
	1 < 2 < \dots < r < \bar{r} < \dots < \bar{1}.
\]
Denote by $|\cdot |$ the map from $[r] \cup [\bar{r}]$ to $[r]$ that sends letters to their corresponding unbarred values.

\begin{definition}
\label{definition.V def}
Let $\mathcal{V} \subseteq \widehat{\mathcal{V}}$ be given by
\begin{equation*}
	\mathcal{V} \coloneqq \{ v_r \otimes v_{r-1} \otimes \dots \otimes v_1 \in \widehat{\mathcal{V}}  \mid v_i > v_j \text{ and } |v_i| \not = |v_j| \text{ for all } i > j\}.
\end{equation*}
Let $f_{i} = \widehat{f}^2_{i}$, $e_{i} = \widehat{e}^2_{i}$ for $1 \leqslant i <r$ and $f_{r} = \widehat{f}_{r}$, $e_{r} = \widehat{e}_{r}$.
\end{definition}

\begin{lemma}
\label{lemma.virtual_closure}
$\mathcal{V}\sqcup \{\emptyset\}$ is closed under the operators $f_{i}$ and $e_{i}$ for $1\leqslant i \leqslant r$.
\end{lemma}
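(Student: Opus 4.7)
My plan is to identify $\mathcal{V}$ set-theoretically with the spin crystal and then run a short case analysis based on the tensor product rule.

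First, I would observe that each $v = v_r \otimes \cdots \otimes v_1 \in \mathcal{V}$ is uniquely determined by its sign pattern $\epsilon(v) \in \{\pm\}^r$, where $\epsilon_k = +$ if $k$ appears unbarred among the factors and $\epsilon_k = -$ if $\overline{k}$ appears. Indeed, the defining conditions force each $k \in [r]$ to appear either as $k$ or as $\overline{k}$ exactly once, and the strict decrease in the order $1 < 2 < \cdots < r < \overline{r} < \cdots < \overline{1}$ then pins down the positions uniquely. Checking closure thus reduces to verifying that for every $i$ the image $f_i(v)$ (and symmetrically $e_i(v)$) is either $\emptyset$ or again satisfies the strictly decreasing/distinct-absolute-values condition.

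For $1 \leq i < r$, only factors with value in $\{i, i+1, \overline{i}, \overline{i+1}\}$ contribute to $\widehat{\varphi}_i$ or $\widehat{\varepsilon}_i$, and by the distinct-absolute-values hypothesis there are at most two such factors, entirely determined by $(\epsilon_i, \epsilon_{i+1})$. I would then split into the four cases for this pair. In $(+,+)$ the relevant signs read left to right are $-+$ and cancel; in $(-,-)$ they are again $-+$ and cancel; in $(-,+)$ both contributing factors carry sign $-$. In all three of these subcases the tensor product signature rule gives $\widehat{\varphi}_i(v) \leq 1$, hence $f_i(v) = \widehat{f}_i^2(v) = \emptyset$. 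Only in case $(+,-)$ do two $+$ signs survive uncancelled: the first $\widehat{f}_i$ hits the rightmost $+$, turning the $i$ into $i+1$; the intermediate element then has signature $+-$, so the second $\widehat{f}_i$ hits the remaining $+$, turning $\overline{i+1}$ into $\overline{i}$. A direct check using $i+1 > i$ and $\overline{i} > \overline{i+1}$ in the order, together with the absolute-value gap around the changed positions, shows that the result is strictly decreasing with the same multiset of absolute values, hence lies in $\mathcal{V}$. The argument for $e_i$ is the mirror image, with $(\epsilon_i, \epsilon_{i+1}) = (-,+)$ playing the nontrivial role.

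For $i = r$ the analysis is easier: at most one factor contributes, either $r$ or $\overline{r}$. If $\epsilon_r = +$, then $f_r(v) = \widehat{f}_r(v)$ simply replaces the unique $r$ by $\overline{r}$, and since $\overline{r}$ sits just above $r$ in the order while no other factor has absolute value $r$, the result remains in $\mathcal{V}$. If $\epsilon_r = -$ then $\widehat{f}_r(v) = \emptyset$, and $e_r$ is handled symmetrically.

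The main obstacle I anticipate is the signature bookkeeping in the $(+,-)$ subcase for $i < r$: one must confirm that $\widehat{f}_i$ acts on the expected tensor factor in each of its two applications and that the strict decrease $v_{\ell+1} > v_\ell$ continues to hold at the two positions whose values have just changed. Both verifications reduce to elementary comparisons in the total order on $[r] \cup [\overline{r}]$, using the distinct-absolute-values hypothesis to guarantee the necessary gaps, so once one unpacks the tensor product convention there is no substantive difficulty.
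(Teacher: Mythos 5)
Your proof is correct and follows essentially the same route as the paper: both reduce to a four-way case analysis on the pair of letters with absolute values $i$ and $i+1$ (your sign patterns $(\epsilon_i,\epsilon_{i+1})$ are exactly the paper's four possible forms of $v$), with the single nontrivial case $(+,-)$ handled by tracking the two applications of $\widehat{f}_i$, and the case $i=r$ treated separately in the same way. You merely spell out the signature bookkeeping for the tensor product rule more explicitly than the paper does; no substantive difference.
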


\begin{proof}
Let $v = v_{r} \otimes v_{r-1} \otimes \dots \otimes v_{1} \in \mathcal{V}$. We break into cases depending on the value of $i$.

Assume that $i = r$. By the definition of $\mathcal{V}$, $v$ must either contain an $r$ or $\bar{r}$, but not both. If $v$ contains an $r$, then this
$r$ must be to the left of all other unbarred letters and to the right of all barred letters. As $f_{r}$ changes the $r$ to a $\bar{r}$, $f_{r}(v)$ is still in
$\mathcal{V}$. If $v$ contains an $\bar{r}$, then $f_{r}(v) = \emptyset \in \mathcal{V}\sqcup \{\emptyset\}$.

Assume that $i \not = r$. Note that the conditions imposed on $v$ imply that there exists exactly two indices $j$ and $k$ such that $|v_{j}| = i$ and $|v_{k}| = i+1$. By the ordering imposed on $v$, $v$ can only be in the following forms:
\begin{itemize}
\item $\cdots \otimes i+1 \otimes i \otimes \cdots$
\item $\cdots \otimes \bar{i} \otimes \overline{i+1} \otimes \cdots$
\item $\cdots \otimes \bar{i} \otimes \cdots \otimes i+1 \otimes \cdots$
\item $\cdots \otimes \overline{i+1} \otimes \cdots \otimes i \otimes \cdots$
\end{itemize}
For the first three cases, $f_{i}(v) = \emptyset$. When $v$ is of the form $\cdots \otimes \overline{i+1} \otimes \cdots \otimes i \otimes \cdots$, $f_{i}$
replaces the $\overline{i+1}$ with $\bar{i}$ and the $i$ with $i+1$. Since $v$ does not contain an $\bar{i}$ nor an $i+1$, $f_{i}(v)$ is an element
of $\mathcal{V}$.

The fact that $e_{i}(v) \in \mathcal{V}$ for all $i\in 1\leqslant i\leqslant r$ follows similarly. Thus, $\mathcal{V}$ is closed under the operators $f_{i}$ and $e_{i}$.
\end{proof}

\begin{lemma}
All elements of $\mathcal{V}$ are in the connected component of $\widehat{\mathcal{V}}$ with highest weight element
$r \otimes r-1 \otimes \cdots \otimes 1$.
\end{lemma}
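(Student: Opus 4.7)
The plan is to connect every $v\in \mathcal{V}$ to $r \otimes (r-1) \otimes \cdots \otimes 1$ by a sequence of virtual raising operators $e_i$. Since each $e_i$ is by definition a product of the ambient operators $\widehat{e}_j$, such a sequence is literally a path in $\widehat{\mathcal{V}}$, and therefore witnesses that $v$ lies in the same connected component of $\widehat{\mathcal{V}}$ as $r \otimes (r-1) \otimes \cdots \otimes 1$.

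The first step is to extract from the tensor product rule a concrete description of how each virtual raising operator acts on $\mathcal{V}$, in parallel with the analysis already carried out for the $f_i$ in the proof of Lemma~\ref{lemma.virtual_closure}. For $i=r$ one has $e_r = \widehat{e}_r$, and a direct check shows that $e_r(v)\neq \emptyset$ for $v \in \mathcal{V}$ exactly when $\bar r$ appears in $v$; in that case, $e_r$ replaces the $\bar r$ by $r$. For $i<r$ one has $e_i = \widehat{e}_i^2$, and a case analysis of the four possible local configurations of letters with absolute value $i$ or $i+1$ in an element of $\mathcal{V}$ shows that $e_i(v)\neq \emptyset$ precisely when both $\bar i$ and $i+1$ appear in $v$; in that case the first application of $\widehat{e}_i$ turns $\bar i$ into $\overline{i+1}$ (yielding an intermediate element outside $\mathcal{V}$), and the second turns $i+1$ into $i$, restoring membership in $\mathcal{V}$.

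With this description the connectivity argument is a short induction on the lexicographic pair $(b(v),\, r-k(v))$, where $b(v)$ is the number of barred letters in $v$ and $k(v)$ denotes the largest index $j$ with $\bar j$ appearing in $v$ (set to $0$ if $v$ has no barred letters). If $b(v)=0$, then $v$ consists of the $r$ distinct unbarred letters arranged in the forced decreasing order, so $v = r \otimes (r-1) \otimes \cdots \otimes 1$ and we are done. Otherwise, setting $k=k(v)$, if $k=r$ then $e_r$ is defined and strictly decreases $b(v)$; if $k<r$, the maximality of $k$ forces $k+1$ to appear unbarred in $v$, so $e_k$ is defined and replaces $\bar k$ by $\overline{k+1}$, which leaves $b(v)$ unchanged and increases the largest barred index to $k+1$. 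In either case the statistic strictly decreases, so after finitely many steps we arrive at $r \otimes (r-1) \otimes \cdots \otimes 1$.

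The main technical point will be the two-step verification of the action of $\widehat{e}_i^2$ on $\mathcal{V}$ for $i<r$, since after one application of $\widehat{e}_i$ the element typically leaves $\mathcal{V}$ and the signature rule must then be applied on a slightly altered configuration. This is, however, a finite case check of exactly the same nature as the one already performed for $f_i$ in the proof of Lemma~\ref{lemma.virtual_closure}, and once tabulated it feeds directly into the induction above.
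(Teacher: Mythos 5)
Your proposal is correct and follows essentially the same route as the paper's proof: both identify the rightmost barred letter $\bar{k}$ (equivalently, the maximal barred index $k$), apply $e_r$ when $k=r$ to decrease the number of barred letters and $e_k$ otherwise to push the maximal barred index up to $k+1$, terminating at $r\otimes (r-1)\otimes\cdots\otimes 1$, the unique element of $\mathcal{V}$ with no barred letters. The only cosmetic difference is that you make termination explicit via the lexicographic statistic $(b(v),\,r-k(v))$, whereas the paper appeals to finiteness of the ambient crystal; the content is the same.
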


\begin{proof}
Clearly $r \otimes r-1 \otimes \dots \otimes 1$ is a highest weight element of $\widehat{\mathcal{V}}$ and the only element in $\mathcal{V}$ without any
barred letters.

Consider $v  = v_{r} \otimes \cdots \otimes v_{1} \in \mathcal{V}$ containing a barred letter. Observe that the number of barred letters
in $e_{i}(v)$ is at most the number of barred letters in $v$ whenever $e_{i}(v) \neq \emptyset$. Since $\widehat{\mathcal{V}}$ is finite and
$\mathcal{V}$ is closed under $e_{i}$, it suffices to show that $e_{i}(v) \neq \emptyset$ for some $i$. Let $v_{j}$ denote the rightmost tensor factor in
$v$ that is a barred letter, and let $i = |v_{j}|$. We break into cases depending on the value of $i$.

If $i = r$, then $v_{j} = \bar{r}$ and $v$ cannot contain an $r$. This implies that $e_{r}(v) \neq \emptyset$ as it acts on $v$ by replacing $v_{j}$ by $r$.
The number of barred letters has decreased by one.

If $i \neq r$, then $v_{j} = \bar{i}$. As $v_{j}$ is the rightmost barred letter in $v$, $v$ must be of the form $\cdots \otimes \bar{i} \otimes \cdots \otimes i+1
\otimes \cdots$. Thus, $e_{i}$ acts by changing $\bar{i}$ to $\overline{i+1}$ and $i+1$ to $i$. Note that the rightmost barred letter is
closer to $\bar{r}$.
\end{proof}

\begin{definition}
\label{definition.virtual_spintovector}
Let $\Psi \colon \mathcal{B}_{\mathsf{spin}} \to \mathcal{V}$ be the map
\[
	\Psi(\epsilon_{1} \epsilon_{2} \cdots \epsilon_{r}) = v_{r} \otimes v_{r-1} \otimes \dots \otimes v_{1},
\]
where $v_r > v_{r-1} > \cdots > v_1$ such that if $\epsilon_i= +$ then $v$ contains an $i$ and if $\epsilon_i = -$ then $v$ contains an $\bar{i}$ for all
$1\leqslant i\leqslant r$.
\end{definition}

\begin{lemma}
\label{lemma.virtual_intertwine}
The map $\Psi$ is a bijective map that intertwines the crystal operators on $\mathcal{B}_{\mathsf{spin}}$ and $\mathcal{V}$.
\end{lemma}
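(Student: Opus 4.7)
The plan is to establish bijectivity directly from the definition, then verify intertwining of crystal operators by a short case analysis using the tensor-product rule from Section~\ref{section.crystal}.

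For bijectivity, observe both sets have $2^r$ elements. The spin crystal obviously does, and for $\mathcal{V}$ any element $v_r\otimes\cdots\otimes v_1$ is a strictly $<$-decreasing sequence of $r$ letters in $[r]\cup[\bar r]$ with pairwise distinct absolute values; this forces each $a\in\{1,\dots,r\}$ to appear as exactly one of $a$ or $\bar a$, and the decreasing order then fixes the tensor uniquely. So $|\mathcal{V}|=2^r$ and $\Psi$ has the explicit inverse that reads off, for each $a$, whether $a$ or $\bar a$ is present.

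For intertwining I will show $\Psi\circ f_i = f_i\circ\Psi$ separately for $i=r$ and $1\le i<r$; the argument for $e_i$ is completely symmetric. When $i=r$, so $f_r=\widehat{f}_r$, the element $\Psi(\epsilon)$ has exactly one letter of absolute value $r$, either $r$ or $\bar r$, and no other tensor factor contributes to the $r$-signature. Hence $\widehat{f}_r$ either flips $r\to\bar r$ at that position (matching $\epsilon_r\mapsto -$) or annihilates (matching $f_r(\epsilon)=\emptyset$). The $<$-order is preserved because $r$ and $\bar r$ occupy the unique position at the interface between the barred and unbarred blocks. When $1\le i<r$, so $f_i=\widehat{f}_i^{\,2}$, only the two letters of absolute value $i$ or $i+1$ (those determined by $\epsilon_i,\epsilon_{i+1}$) contribute to the $i$-signature. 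A case analysis on $(\epsilon_i,\epsilon_{i+1})$: in the cases $(+,+),(-,-),(-,+)$ the signature, read left to right in the tensor, is $(-,+),(-,+),(-,-)$ respectively, so after the standard ``$-\,+$'' cancellation no ``$+$'' remains and $\widehat{f}_i$ annihilates, matching $f_i(\epsilon)=\emptyset$ on the spin side. In the remaining case $(+,-)$, the letters $\overline{i+1}$ (barred, on the left) and $i$ (unbarred, on the right) give signature $(+,+)$; the first $\widehat{f}_i$ acts on the rightmost ``$+$'' converting $i$ to $i+1$, and the second $\widehat{f}_i$ acts on the remaining ``$+$'' converting $\overline{i+1}$ to $\bar i$. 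By Lemma~\ref{lemma.virtual_closure} the outcome lies in $\mathcal{V}$; and since $\bar i$ is still barred and $i+1$ still unbarred, each occupies the same tensor position as the letter it replaced, so the result is exactly $\Psi(f_i\epsilon)$.

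The main subtlety is bookkeeping rather than anything conceptual: one must apply the tensor-product rule consistently with the paper's convention, and verify in each case that the position of the updated letter under $\widehat{f}_i^{\,2}$ agrees with its position in $\Psi(f_i\epsilon)$ after re-sorting. This follows automatically because all letters of absolute value outside $\{i,i+1\}$ are untouched and the barred/unbarred status of each affected factor is preserved by $\widehat{f}_i$.
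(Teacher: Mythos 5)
Your proof is correct and follows essentially the same route as the paper: a direct case analysis on $i=r$ versus $i<r$ and on the signs $(\epsilon_i,\epsilon_{i+1})$, checking that $\widehat{f}_i^{\,2}$ (resp.\ $\widehat{f}_r$) acts on $\Psi(\epsilon)$ exactly as $f_i$ acts on $\epsilon$, with $e_i$ handled by the partial-inverse remark. The only cosmetic difference is that you recompute the action of $\widehat{f}_i$ on elements of $\mathcal{V}$ via the signature rule, whereas the paper delegates that bookkeeping to Lemma~\ref{lemma.virtual_closure}; your verification of the four sign cases matches the paper's tensor-product convention.
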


\begin{proof}
From the definition of $\Psi$, it is clearly bijective. Let $\epsilon = \epsilon_1 \epsilon_2 \cdots \epsilon_r \in \mathcal{B}_{\mathsf{spin}}$. Since the raising
and lowering operators of a crystal are partial inverses, it suffices to prove that $f_i(\epsilon) \neq \emptyset$ if and only if $f_i(\Psi(\epsilon))\neq \emptyset$
and $\Psi(f_i(\epsilon)) = f_i(\Psi(\epsilon))$ whenever $f_i(\epsilon) \neq \emptyset$.

Assume that $f_i(\Psi(\epsilon)) \neq \emptyset$. If $i = r$, then $\Psi(\epsilon)$ contains an $r$ implying $\epsilon_{r} = +$. Therefore $f_r(\epsilon) \neq
\emptyset$. If $i \not = r$, then $\epsilon$ contains both an $i$ and an $\overline{i+1}$. Thus, $\epsilon_{i} = +$ and $\epsilon_{i+1} = -$ implying
$f_i(\epsilon) \neq \emptyset$.

Assume that $f_{i}(\epsilon) \neq \emptyset$. If $i = r$, then $\epsilon_r = +$ and $f_r$ acts on $\epsilon$ by replacing $\epsilon_{r}$ with a $-$.
This implies that $\Psi(f_r(\epsilon))$ can be obtained from $\Psi(\epsilon)$ by changing the $r$ to $\bar{r}$, which agrees with the action of $f_r$.
Therefore $\Psi(f_{r}(\epsilon)) = f_{r}(\Psi(\epsilon))$.
If $i \neq r$, then $\epsilon_{i} $ must be a $+$ and $\epsilon_{i+1}$ must be a $-$. Thus, $f_{i}$ swaps the signs of $\epsilon_{i}$ and $\epsilon_{i+1}$.
Since $\epsilon_{i} = +$ and $\epsilon_{i+1} = -$, $\Psi(\epsilon)$ must contain both an $\overline{i+1}$ and an $i$. This implies $\Psi(f_{i}(\epsilon))$
can be obtained from $\Psi(\epsilon)$ by replacing the $\overline{i+1}$ with $\bar{i}$ and the $i$ with $i+1$. Observe that $f_{i}$ acts on $\Psi(\epsilon)$
in exactly the same manner. Hence, $\Psi(f_{i}(\epsilon)) = f_{i}(\Psi(\epsilon))$.
\end{proof}

\begin{proposition}
$\mathcal{V}$ is a virtual crystal for the embedding of Lie algebras $B_{r} \hookrightarrow C_{r}$.
\end{proposition}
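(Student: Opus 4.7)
The plan is to verify the three axioms V1, V2, V3 of a virtual crystal for the embedding $B_r \hookrightarrow C_r$ with parameters $\sigma(i) = \{i\}$, $\gamma_i = 2$ for $1 \leqslant i < r$, and $\gamma_r = 1$.

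Axiom V1 is immediate: the ambient crystal $\widehat{\mathcal{V}} = \mathcal{C}_\square^{\otimes r}$ is a tensor power of the vector representation crystal of type $C_r$, hence is attached to a quantum group representation of type $C_r$, which is permitted by the relaxed form of V1 stated above.

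For V3, I would invoke Lemma~\ref{lemma.virtual_closure} directly for closure of $\mathcal{V}\sqcup\{\emptyset\}$ under $e_i$ and $f_i$. The string-length clause $\varepsilon_i(b) = \max\{k\geqslant 0 : e_i^k(b) \neq \emptyset\}$ (and its $\varphi_i$ analogue) then follows from Lemma~\ref{lemma.virtual_intertwine}: the bijection $\Psi$ intertwines the virtual operators on $\mathcal{V}$ with the spin-crystal operators on $\mathcal{B}_{\mathsf{spin}}$, and $\mathcal{B}_{\mathsf{spin}}$ satisfies~\eqref{equation.string length} by construction, so the property transports back to $\mathcal{V}$.

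The main work is V2. Since $\sigma(i) = \{i\}$ is a singleton, the equality-across-$\sigma(i)$ requirement is vacuous, so one only needs $\widehat{\varepsilon}_i(b), \widehat{\varphi}_i(b) \in \gamma_i \mathbb{Z}$. For $i = r$ this is automatic since $\gamma_r = 1$. For $1 \leqslant i < r$ one must show these statistics are even. I would compute them via the standard signature rule in type $C_r$: only tensor factors with absolute value $i$ or $i+1$ contribute to the $i$-signature; the letters $i$ and $\overline{i+1}$ contribute $+$, while $i+1$ and $\bar{i}$ contribute $-$; after pairing each $-$ with the first $+$ to its right, the unpaired $-$'s count $\widehat{\varepsilon}_i$ and the unpaired $+$'s count $\widehat{\varphi}_i$. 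By the defining condition of $\mathcal{V}$, each $b$ has exactly one factor of absolute value $i$ and exactly one of absolute value $i+1$, so the $i$-signature consists of just two symbols. The key step is to check that the ordering $v_r > \cdots > v_1$ together with $1 < \cdots < r < \bar r < \cdots < \bar 1$ forces these two contributing factors to produce a raw signature of $++$, $--$, or the cancelling $-+$, but never the unpaired $+-$. A four-case analysis on whether each of the two relevant factors is barred or unbarred confirms this and yields $\widehat{\varepsilon}_i(b), \widehat{\varphi}_i(b) \in \{0, 2\}$.

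The only real obstacle is this case analysis for V2; the other two axioms are essentially consequences of the preceding lemmas. The subtlety is simply to keep the signature convention aligned with the tensor product rule stated in the crystal background.
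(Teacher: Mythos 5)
Your proposal is correct and follows essentially the same route as the paper: \textbf{V1} from the ambient crystal being a representation crystal, \textbf{V3} from Lemmas~\ref{lemma.virtual_closure} and~\ref{lemma.virtual_intertwine} together with the string-length property of $\mathcal{B}_{\mathsf{spin}}$, and \textbf{V2} by inspecting Definition~\ref{definition.V def}. The only difference is that you spell out the signature-rule case analysis for \textbf{V2} (correctly, including the observation that the ordering on $\mathcal{V}$ excludes the unpaired $+-$ pattern), where the paper simply asserts that $\widehat{\varepsilon}_i(v),\widehat{\varphi}_i(v)\in 2\mathbb{Z}$ is ``not hard to see.''
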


\begin{proof}
The ambient crystal $\widehat{\mathcal{V}}$ is a crystal coming from a representation (see for example~\cite{BumpSchilling.2017}), ensuring
{\bf V1}. Using Lemmas~\ref{lemma.virtual_closure} and~\ref{lemma.virtual_intertwine}, we have $\Psi(\mathcal{B}_{\mathsf{spin}}) = \mathcal{V}$
is closed under the crystal operators $f_{i}$ and $e_{i}$. Since $\mathcal{B}_{\mathsf{spin}}$ and $\widehat{\mathcal{V}}$ are both seminormal, the
string lengths of $\mathcal{B}_{\mathsf{spin}}$ are the same as the string lengths in $\mathcal{V}$, showing {\bf V3}. It is also not hard to see from
Definition~\ref{definition.V def}, that $\widehat{\varphi}_i(v), \widehat{\varepsilon}_i(v) \in 2 \mathbb{Z}$ for $v \in \mathcal{V}$ and $1\leqslant i<r$,
proving {\bf V2}.
\end{proof}

An example of the virtual crystal construction for $\mathcal{B}_{\mathsf{spin}}$ is given in Figure~\ref{figure.virtual and spin}.
The virtual crystal of this section also follows from~\cite{Kashiwara.1996}.
An affine version of this virtual crystal construction (which implies the one in this section) has appeared in~\cite[Lemma 4.2]{FOS.2009}. 

%%%%%%%%%%%%%%%%%%%%%%%%%%%%%%%%%%%%%%%%%%%%
\subsubsection{Virtual crystal $B_r \hookrightarrow C_r$ vector to vector}

The crystal $\mathcal{B}_\square$ of Definition~\ref{definition.vector crystal} can be realized as a virtual crystal inside the ambient crystal
$\widehat{\mathcal{V}} = \mathcal{C}_\square^{\otimes 2}$.

\begin{definition}
\label{definition.V vector}
Define $\mathcal{V} \subseteq \widehat{\mathcal{V}} = \mathcal{C}_\square^{\otimes 2}$ of type $C_r$ as
\[
	\mathcal{V} = \{ a \otimes a \mid 1\leqslant a\leqslant r\} \cup \{\overline{a} \otimes \overline{a} \mid 1\leqslant a \leqslant r\} \cup \{r \otimes \overline{r}\}
\]
with $f_i=\widehat{f}_i^2$, $e_i = \widehat{e}_i^2$ for $1\leqslant i<r$ and $f_r=\widehat{f}_r$, $e_r=\widehat{e}_r$.
\end{definition}

\begin{lemma}
$\mathcal{V}\sqcup \{\emptyset\}$ of Definition~\ref{definition.V vector} is closed under the operators $f_i$ and $e_i$ for $1\leqslant i\leqslant r$ and all
elements in $\mathcal{V}$ are in the connected component of $\widehat{\mathcal{V}}$ with highest weight $1\otimes 1$.
\end{lemma}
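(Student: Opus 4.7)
The plan is a direct case analysis that mirrors the proof of Lemma~\ref{lemma.virtual_closure} for the spin embedding, but is much shorter because $\mathcal{V}$ has only three families of elements: $a\otimes a$, $\overline{a}\otimes\overline{a}$, and $r\otimes\overline{r}$. I will verify closure under $f_i$ first, and then deduce the statement for $e_i$ either by symmetry or from the fact that $f_i$ and $e_i$ are partial inverses on any seminormal crystal. After that, exhibiting a single $f$-chain will immediately give connectedness.

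For closure, I will fix $v\in\mathcal{V}$ and $i\in\{1,\dots,r\}$, and compute $f_i(v)$ directly from $f_i=\widehat{f}_i^2$ (for $i<r$) or $f_i=\widehat{f}_i$ (for $i=r$), applying the tensor product rule recalled in Section~\ref{section.crystal} together with Definition~\ref{definition.vector crystal}. The upshot of the case analysis will be the following (and nothing else is nonzero): for $i<r$, $f_i(i\otimes i)=(i+1)\otimes(i+1)$ and $f_i(\overline{i+1}\otimes\overline{i+1})=\overline{i}\otimes\overline{i}$, while $f_i(r\otimes\overline{r})=\emptyset$; for $i=r$, $f_r(r\otimes r)=r\otimes\overline{r}$ and $f_r(r\otimes\overline{r})=\overline{r}\otimes\overline{r}$. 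In every remaining case (wrong value of $a$, wrong index, etc.), one checks that at least one of the two applications of $\widehat{f}_i$ hits $\emptyset$, so that $f_i(v)=\emptyset\in\mathcal{V}\sqcup\{\emptyset\}$. The checks are entirely local: each tensor factor of $v$ lies in a short $i$-string of $\mathcal{C}_{\square}$, and the signature/tensor product rule reduces the verification to a one-line computation for each $(v,i)$.

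Connectedness then follows by reading off an explicit $f$-path from $1\otimes 1$ to every element of $\mathcal{V}$. Concretely, the nonzero arrows listed above assemble into the single chain
\[
1\otimes 1\xrightarrow{f_1}2\otimes 2\xrightarrow{f_2}\cdots\xrightarrow{f_{r-1}}r\otimes r\xrightarrow{f_r}r\otimes\overline{r}\xrightarrow{f_r}\overline{r}\otimes\overline{r}\xrightarrow{f_{r-1}}\overline{r-1}\otimes\overline{r-1}\xrightarrow{f_{r-2}}\cdots\xrightarrow{f_1}\overline{1}\otimes\overline{1},
\]
which visits every element of $\mathcal{V}$ exactly once. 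Since $\widehat{e}_j(1)=\emptyset$ for all $j$, the element $1\otimes 1$ is highest weight in $\widehat{\mathcal{V}}$, and the chain shows $\mathcal{V}$ is contained in its connected component.

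I do not expect any real obstacle: the only thing to watch for is being careful about the equality case in the tensor product rule $\varphi_i(c)\leqslant\varepsilon_i(b)$, which determines whether a double application $\widehat{f}_i^2$ survives or vanishes. Organizing the argument by splitting on the three types of elements of $\mathcal{V}$ (and within each, on whether $i<r$ or $i=r$) keeps the bookkeeping under control; the $e_i$-closure statement will then follow by the analogous (and entirely symmetric) chain of computations, or simply by observing that partial inverses of maps preserving $\mathcal{V}\sqcup\{\emptyset\}$ preserve $\mathcal{V}\sqcup\{\emptyset\}$ as well.
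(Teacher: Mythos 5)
Your proposal is correct, and since the paper's ``proof'' of this lemma is literally ``We leave this to the reader to check,'' your case analysis is exactly the verification the authors intend; it mirrors their explicit argument for Lemma~\ref{lemma.virtual_closure}. I checked your table of nonzero arrows against the tensor product rule and Definition~\ref{definition.vector crystal}: for $i<r$ one indeed gets $f_i(i\otimes i)=(i+1)\otimes(i+1)$ and $f_i(\overline{i+1}\otimes\overline{i+1})=\overline{i}\otimes\overline{i}$ with everything else (including $r\otimes\overline r$) killed, and $f_r$ gives $r\otimes r\mapsto r\otimes\overline r\mapsto\overline r\otimes\overline r$; your chain then visits all $2r+1$ elements of $\mathcal{V}$ starting from the highest weight element $1\otimes 1$, which settles connectedness.

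One small caution: the fallback you offer for $e_i$-closure --- ``partial inverses of maps preserving $\mathcal{V}\sqcup\{\emptyset\}$ preserve $\mathcal{V}\sqcup\{\emptyset\}$'' --- is not a valid general principle. Knowing that $f_i(\mathcal{V})\subseteq\mathcal{V}\sqcup\{\emptyset\}$ does not by itself rule out an element $u\notin\mathcal{V}$ with $f_i(u)\in\mathcal{V}$, in which case $e_i$ would carry an element of $\mathcal{V}$ outside $\mathcal{V}$. To make that shortcut rigorous you would need to add that $\widehat f_i$ (hence $\widehat f_i^{\,2}$) is injective where nonzero and then still check $e_i(w)=\emptyset$ for each $w\in\mathcal{V}$ that is not in the image of $f_i|_{\mathcal{V}}$ --- which is essentially the symmetric computation anyway. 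Since your primary route is that direct symmetric computation (and it goes through: e.g.\ $e_i\bigl((i+1)\otimes(i+1)\bigr)=i\otimes i$, all other cases vanish), the proof stands; just drop or repair the partial-inverse remark.
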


\begin{proof}
We leave this to the reader to check.
\end{proof}

\begin{definition}
\label{definition.Psi vector}
Let $\Psi \colon \mathcal{B}_\square \to \mathcal{V}$ be the map $\Psi(a) = a \otimes a$ and $\Psi(\overline{a}) = \overline{a} \otimes \overline{a}$
for $1\leqslant a \leqslant r$ and $\Psi(0)=r \otimes \overline{r}$.
\end{definition}

\begin{lemma}
The map $\Psi$ of Definition~\ref{definition.Psi vector} is a bijective map that intertwines the crystal operators on $\mathcal{B}_\square$ and $\mathcal{V}$.
\end{lemma}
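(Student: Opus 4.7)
The plan is to prove bijectivity by a simple counting argument and then verify the intertwining property by a finite case analysis, exploiting the fact that every element of $\mathcal{B}_\square$ and every element of $\mathcal{V}$ has small support with respect to each crystal operator.

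For bijectivity, note that $|\mathcal{B}_\square|=2r+1$ while Definition~\ref{definition.V vector} presents $\mathcal{V}$ as a disjoint union of cardinality $r+r+1=2r+1$, so it suffices to observe that $\Psi$ is injective, which is clear because the three pieces of the target are separated by the bar/unbar pattern and by the distinct letters $a$ used.

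For the intertwining, since $e_i$ and $f_i$ are partial inverses, it is enough to verify, for every $b\in\mathcal{B}_\square$ and every $i\in\{1,\dots,r\}$, that $f_i(b)=\emptyset$ if and only if $f_i(\Psi(b))=\emptyset$, and that $\Psi(f_i(b))=f_i(\Psi(b))$ otherwise. I would split into two regimes. First, for $1\le i<r$, the virtual operator is $\widehat{f}_i^2$, and the element $\Psi(0)=r\otimes\overline{r}$ is annihilated because neither $r$ nor $\overline{r}$ lies in the support of $\widehat{f}_i$ (matching $f_i(0)=\emptyset$ in $\mathcal{B}_\square$); for $\Psi(a)=a\otimes a$ and $\Psi(\overline{a})=\overline{a}\otimes\overline{a}$ with $a\notin\{i,i+1\}$, both sides are $\emptyset$ for trivial support reasons, and for $a\in\{i,i+1\}$ a direct application of the tensor-product rule gives the four computations
\[
\widehat{f}_i^2(i\otimes i)=(i{+}1)\otimes(i{+}1),\quad \widehat{f}_i^2(\overline{i{+}1}\otimes\overline{i{+}1})=\overline{i}\otimes\overline{i},
\]
while $\widehat{f}_i$ annihilates $(i{+}1)\otimes(i{+}1)$ and $\overline{i}\otimes\overline{i}$ in one step; these match $f_i(i)=i+1$, $f_i(\overline{i+1})=\overline{i}$, $f_i(i+1)=f_i(\overline{i})=\emptyset$ in $\mathcal{B}_\square$.

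Second, for $i=r$ the virtual operator is $\widehat{f}_r$ itself. A direct check using the tensor product rule shows $\widehat{f}_r(r\otimes r)=r\otimes\overline{r}$ and $\widehat{f}_r(r\otimes\overline{r})=\overline{r}\otimes\overline{r}$, while $\widehat{f}_r$ annihilates $a\otimes a$ and $\overline{a}\otimes\overline{a}$ for $a<r$ as well as $\overline{r}\otimes\overline{r}$; these mirror exactly $f_r(r)=0$, $f_r(0)=\overline{r}$, and $f_r$ annihilating every other element of $\mathcal{B}_\square$.

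The only step that requires any care is the $i=r$ computation, because it is the unique place where $\widehat{f}_r$ acts non-trivially in a single step on a mixed tensor $r\otimes\overline{r}$, and one must track the conditions $\varphi_r(c)\le\varepsilon_r(b)$ of the tensor rule to conclude that the operator acts on the left factor in $r\otimes r$ but on the left factor of $r\otimes\overline{r}$ as well. Everything else reduces to invoking the tensor rule twice and reading off values of $\varepsilon_i,\varphi_i$ from the definition of $\mathcal{C}_\square$, so the argument is entirely finite and explicit.
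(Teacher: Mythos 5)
The paper offers no argument for this lemma (its proof is ``We leave this to the reader to check''), so your direct, finite verification is exactly the intended route, and its overall structure --- a counting argument for bijectivity, reduction to the lowering operators via the partial-inverse property, and a case check using the tensor product rule --- is sound. However, two local statements are wrong as written. First, your justification that $\widehat{f}_i^{\,2}$ annihilates $\Psi(0)=r\otimes\overline{r}$ for $i<r$ ``because neither $r$ nor $\overline{r}$ lies in the support of $\widehat{f}_i$'' fails for $i=r-1$: there $\widehat{f}_{r-1}(\overline{r})=\overline{r-1}\neq\emptyset$. The conclusion still holds, but for the tensor-rule reason: since $\varphi_{r-1}(\overline{r})=1\leqslant 1=\varepsilon_{r-1}(r)$, the operator is directed to the left factor $r$, where $\widehat{f}_{r-1}$ vanishes, so already $\widehat{f}_{r-1}(r\otimes\overline{r})=\emptyset$. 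Second, your closing remark that $\widehat{f}_r$ ``acts on the left factor in $r\otimes r$'' contradicts your own (correct) displayed computation $\widehat{f}_r(r\otimes r)=r\otimes\overline{r}$: since $\varphi_r(r)=1>0=\varepsilon_r(r)$, the operator acts on the \emph{right} factor there, and only on $r\otimes\overline{r}$ (where $\varphi_r(\overline{r})=0\leqslant 0=\varepsilon_r(r)$) does it act on the left factor. With these two points repaired the proof is complete; everything else --- the computations $\widehat{f}_i^{\,2}(i\otimes i)=(i{+}1)\otimes(i{+}1)$ and $\widehat{f}_i^{\,2}(\overline{i{+}1}\otimes\overline{i{+}1})=\overline{i}\otimes\overline{i}$, the annihilation cases, and the chain $r\otimes r\mapsto r\otimes\overline{r}\mapsto\overline{r}\otimes\overline{r}$ matching $r\mapsto 0\mapsto\overline{r}$ --- checks out against the tensor product rule as stated in the paper.
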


\begin{proof}
We leave this to the reader to check.
\end{proof}

\begin{proposition}
$\mathcal{V}$ of Definition~\ref{definition.V vector} is a virtual crystal for the embedding of Lie algebras $B_{r} \hookrightarrow C_{r}$.
\end{proposition}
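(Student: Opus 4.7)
The plan is to follow the same template as the proof of the previous proposition for the spin crystal, verifying the three conditions \textbf{V1}, \textbf{V2}, and \textbf{V3} of the definition of a virtual crystal, with the bulk of the work already packaged in the two lemmas that immediately precede the statement.

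First, \textbf{V1} is immediate: the ambient crystal $\widehat{\mathcal{V}} = \mathcal{C}_\square^{\otimes 2}$ is a tensor product of the vector representation crystal of type $C_r$, hence comes from a quantum group representation, as recorded earlier in the paper (cf.\ the analogous step in the previous proposition and~\cite{BumpSchilling.2017}).

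Next, for \textbf{V3}, the closure of $\mathcal{V} \sqcup \{\emptyset\}$ under the virtual operators $f_i = \widehat{f}_i^2$, $e_i = \widehat{e}_i^2$ for $1\leqslant i<r$ and $f_r=\widehat{f}_r$, $e_r=\widehat{e}_r$ is precisely the content of the first of the two lemmas above. The string length condition then follows from the bijection $\Psi\colon \mathcal{B}_\square \to \mathcal{V}$ of Definition~\ref{definition.Psi vector}: by the second lemma, $\Psi$ intertwines the crystal operators, and since both $\mathcal{B}_\square$ (as a crystal of a type $B_r$ representation) and $\widehat{\mathcal{V}}$ (as a tensor product of crystals of type $C_r$ representations) are seminormal, the string lengths on $\mathcal{B}_\square$ transport to the required equalities $\varepsilon_i(b) = \max\{k \geqslant 0 \mid e_i^k(b) \neq \emptyset\}$ and $\varphi_i(b) = \max\{k \geqslant 0 \mid f_i^k(b) \neq \emptyset\}$ for $b \in \mathcal{V}$.

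Finally, for \textbf{V2}, one checks directly from Definition~\ref{definition.V vector} that $\widehat{\varphi}_i(v)$ and $\widehat{\varepsilon}_i(v)$ are even for $1\leqslant i<r$ and every $v\in\mathcal{V}$, matching $\gamma_i = 2$ (and trivially a multiple of $\gamma_r=1$ when $i=r$). Indeed, using the tensor product rule for crystals on each of the three shapes of elements $a\otimes a$, $\overline{a}\otimes \overline{a}$, and $r\otimes \overline{r}$, one sees that the contribution of each tensor factor to $\widehat{\varphi}_i$ and $\widehat{\varepsilon}_i$ is duplicated (for $i<r$, the letter $i$, $\overline i$, $i+1$, or $\overline{i+1}$ appears in both factors or in neither), which gives the required divisibility by $2$. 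Since $\sigma(i) = \{i\}$ in this embedding, the equal-value condition across $j \in \sigma(i)$ is vacuous.

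The main obstacle, such as it is, will simply be the case check in \textbf{V2}: one must verify evenness of $\widehat{\varphi}_i(v)$ and $\widehat{\varepsilon}_i(v)$ on the singular element $r\otimes\overline{r}$, where the two tensor factors carry different letters; here one uses that $\widehat{\varphi}_i(r\otimes \overline r)=\widehat{\varepsilon}_i(r\otimes \overline r)=0$ for all $i<r$, so evenness holds trivially, and all other elements of $\mathcal{V}$ have identical tensor factors, making the divisibility by $2$ immediate.
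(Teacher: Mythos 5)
Your proof is correct and follows exactly the template the paper uses for the analogous spin-crystal proposition; the paper itself gives no argument here (it leaves the verification to the reader), so you have simply filled in the intended details. In particular your handling of \textbf{V2} on the element $r\otimes\overline{r}$ is right: for $i=r-1$ the $\widehat{\varepsilon}_{r-1}$ of the left factor cancels the $\widehat{\varphi}_{r-1}$ of the right factor under the tensor product rule, giving $\widehat{\varphi}_{r-1}=\widehat{\varepsilon}_{r-1}=0$, and on the diagonal elements $a\otimes a$ the string lengths double as you claim.
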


\begin{proof}
We leave this to the reader to check.
\end{proof}

\begin{figure}
\includegraphics[width=13cm]{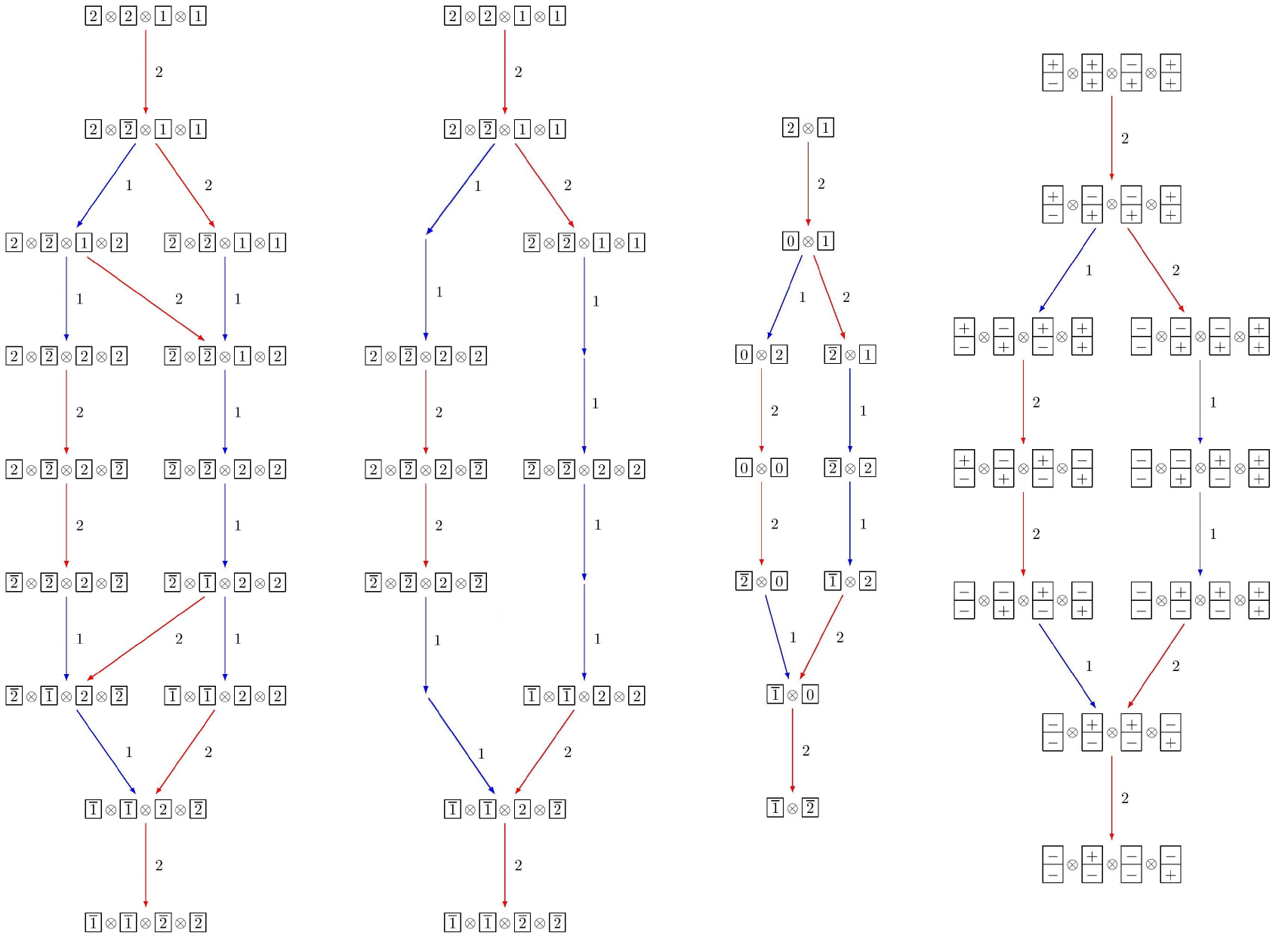}
\caption{
Far Left: One connected component $\widehat{\mathcal{S}}$ of the crystal
$\widehat{\mathcal{V}}^{\otimes 2} = (\mathcal{C}_\square^{\otimes 2})^{\otimes 2}$ of type $C_2$.
Middle Left: The connected component $\mathcal{S}$ of the virtual crystal $\mathcal{V}^{\otimes 2}$ inside $\mathcal{S}$ induced by
Definition~\ref{definition.V vector}.
Middle Right: The corresponding connected component $\mathcal{T}$ of the crystal $\mathcal{B}_\square^{\otimes 2}$ of type $B_2$ that
corresponds to $\mathcal{S}$ under the embedding given in Definition~\ref{definition.Psi vector}.
Far Right: The connected component $\mathcal{U}$ of $(\mathcal{B}_{\mathsf{spin}}\otimes\mathcal{B}_{\mathsf{spin}})^{\otimes 2}$
of type $B_2$ corresponding to $\mathcal{T}$ under the isomorphism given in Figure~\ref{figure.B vector}.
\label{figure.virtual vect and spin}}
\end{figure}

An example of the virtual crystal construction for $\mathcal{B}_\square$ is given in Figure~\ref{figure.virtual vect and spin}.
The virtual crystal of this section also follows from~\cite{Kashiwara.1996}.
An affine version of this virtual crystal construction (which implies the one in this section) has appeared in~\cite[Theorem 4.8]{FOS.2009}. 

%%%%%%%%%%%%%%%%%%%%%%%%%%%%%%%%%%%%%%%%%%%
\subsection{Highest weights of weight zero}
\label{section.highest weight}

A weight $\lambda \in \Lambda$ is called \defn{minuscule} if $\langle \lambda, \alpha^\vee \rangle \in \{0,\pm1\}$ for all coroots $\alpha^\vee$.
A crystal $\mathcal{B}$ is called \defn{minuscule} if $\mathsf{wt}(b)$ is minuscule for all $b \in \mathcal{B}$. Note that $\mathcal{B}_{\mathsf{spin}}$
is a minuscule crystal (see for example~\cite[Chapter 5.4]{BumpSchilling.2017}).

A weight $\lambda$ is called \defn{dominant} if $\langle \lambda,\alpha_i^\vee \rangle \geqslant 0$ for all $i\in I$. Let
$\Lambda^{+} \subseteq \Lambda$ denote the set of all dominant weights. Except for spin weights, dominant weights can be identified with partitions,
where the fundamental weight $\omega_h$ corresponds to a column of height $h$ in the partition. A \defn{partition} $\lambda$ is a sequence
$\lambda = (\lambda_1,\lambda_2, \ldots,\lambda_\ell)$ such that $\lambda_1 \geqslant \lambda_2 \geqslant \cdots \geqslant \lambda_\ell \geqslant 0$.
We identify partitions that differ by trailing zeroes. That is, $(3,2,0,0)$ is identified with the partition $(3,2)$.

Let $\mathcal{B}_1,\mathcal{B}_2,\ldots,\mathcal{B}_n$ be minuscule crystals. For any highest weight element
\[
	u = u_n\otimes \cdots \otimes u_1 \in \mathcal{B}_n \otimes \cdots \otimes \mathcal{B}_1
\]
we may bijectively associate a sequence of dominant weights $\emptyset = \mu^0, \mu^1, \ldots, \mu^n$, where $\mu^q := \sum_{i=1}^q \mathsf{wt}(u_i)$.
The final weight $\mu := \mu^n$ of such a sequence is also the weight of the crystal element $u$.
If $\mu$ is zero, $u$ is a \defn{highest weight element of weight zero}.

Note that the number of highest weight elements of weight zero in a tensor product of crystals is equal to the dimension of the invariant subspace,
see for example~\cite{Westbury.2016, PfannererRubeyWestbury.2020}.

%%%%%%%%%%%%%%%%%%%%%%%%%%%%%%%%%%%%%%%%%%%
\subsubsection{Oscillating tableaux}
Oscillating tableaux were introduced by Sundaram~\cite{Sundaram.1990}.

\begin{definition}[Sundaram~\cite{Sundaram.1990}]
An \defn{$r$-symplectic oscillating tableau} $\osc$ of length $n$ and shape $\mu$ is a sequence of partitions
\[
	\osc = (\emptyset = \mu^0, \mu^1, \ldots, \mu^{n} = \mu)
\]
such that the Ferrers diagrams of two consecutive partitions differ by exactly one cell, and each partition $\mu^i$ has at most $r$ nonzero parts.
\end{definition}

The $r$-symplectic oscillating tableaux of length $n$ and shape $\mu$ are in bijection with highest weight elements in 
$\mathcal{C}_\square^{\otimes n}$
of type $C_r$ and weight $\mu$. This can be seen by induction on $n$. For $n=1$, the only highest weight element is $1$ and the only
oscillating tableau is $(\emptyset, \square)$. Suppose the claim is true for $n-1$. If $u = b \otimes u_0 \in \mathcal{C}_\square^{\otimes n}$ is highest
weight, then $u_0 \in \mathcal{C}_\square^{\otimes (n-1)}$ must be highest weight and hence by induction corresponds to an oscillating tableau
$(\emptyset= \mu^0, \mu^1, \ldots, \mu^{n-1})$. The element $b$ is either an unbarred or barred letter. If $b$ is the unbarred letter $a$, $\mu^n$ differs
from $\mu^{n-1}$ by a box in row $a$. If $b$ is the barred letter $\overline{a}$, $\mu^n$ has one less box in row $a$ than $\mu^{n-1}$. More precisely,
for a highest weight element $b_n \otimes \cdots \otimes b_1 \in \mathcal{C}_\square^{\otimes n}$, the corresponding oscillating tableau satisfies
$\mu^q = \sum_{i=1}^q \mathsf{wt}(b_i)$. This map can be reversed
and it is not hard to see that the result is a highest weight element using the tensor product rule.

%%%%%%%%%%%%%%%%%%%%%%%%%%%%%%%%%%%%%%%%%%%
\subsubsection{$r$-fans of Dyck paths}
\label{section.r fans}
Next we relate highest weight elements of weight zero in $\mathcal{B}_{\mathsf{spin}}^{\otimes n}$ of type $B_r$ and $r$-fans of Dyck paths.
A \defn{Dyck path} of length $n$ is a path from $(0,0)$ to $(n,0)$ consisting of up-steps $(1,1)$ and down-steps $(1,-1)$ which never crosses 
the line $y=0$.

\begin{definition}
\label{definition.r-fan}
An \defn{$r$-fan of Dyck paths} $\fan$ of length $n$ is a sequence
\[
	\fan = (\emptyset = \mu^0, \mu^1, \ldots, \mu^{n} = \emptyset)
\]
of partitions $\mu^i$ with at most $r$ parts such that the Ferrers diagram of two consecutive partitions differs by exactly one cell in each part.
In other words, $\mu^i$ differs from $\mu^{i+1}$ by $(\pm 1, \pm 1, \ldots, \pm 1)$ for $0\leqslant i<n$.
\end{definition}

\begin{example}
\label{example.3-fan}
For $r=3$ and $n=4$, the following is a $3$-fan of Dyck paths
\[
	\fan = ((000),(111),(220), (111), (000)).
\]
\end{example}

Since $\mathcal{B}_{\mathsf{spin}}$ of type $B_r$ is minuscule, by the above discussion $\epsilon = \epsilon_n \otimes \cdots \otimes \epsilon_1 \in
\mathcal{B}_{\mathsf{spin}}^{\otimes n}$ is highest weight if and only if $\sum_{i=1}^q \mathsf{wt}(\epsilon_i)$ is dominant for all $1\leqslant q \leqslant n$.
Hence highest weight elements of weight zero can be identified with an $r$-fan of Dyck paths of length $n$: the $j$-th entry of $\epsilon_i$ is $+$ if and only
if the $j$-th Dyck path has an up-step at position $i$. In particular, for a highest weight element $\epsilon$ of weight zero, the sequence of dominant weights
$\mu^q := \sum_{i=1}^q 2\mathsf{wt}(\epsilon_i)$ for $0\leqslant q \leqslant n$ defines an $r$-fan of Dyck paths consistent with Definition~\ref{definition.r-fan}.

A similar bijection was given in~\cite{OS.2019}.

\begin{example}
\label{example.3fan}
The $3$-fan of Dyck paths of Example~\ref{example.3-fan} corresponds to the element
\[
	\epsilon = (-, -, -) \otimes (-, -, +) \otimes (+, +, -) \otimes (+, +, +) \in \mathcal{B}_{\mathsf{spin}}^{\otimes 4}.
\]
\end{example}

Following Definition~\ref{definition.virtual_spintovector}, we obtain an embedding from the set of $r$-fans of Dyck paths into the set of oscillating tableaux.

\begin{definition}
For an $r$-fan of Dyck paths $\fan = (\emptyset = \lambda^0, \lambda^1, \ldots, \lambda^{n} = \emptyset)$ we define the oscillating
tableau $\fantoosc(\fan) = (\emptyset = \mu^0, \dots, \mu^{rn} = \emptyset)$ as follows. Let $v^t = \Psi(\lambda^t-\lambda^{t-1})$
for $1\leqslant t \leqslant n$ with $\Psi$ as in Definition~\ref{definition.virtual_spintovector}. Then
\[
	\mu^{tr + s} = \lambda^t + \sum_{i=1}^s \mathsf{wt}(v_i^{t+1}) \qquad \text{ for $0\leqslant t < n$, $0\leqslant s < r$.}
\]
\end{definition}

%%%%%%%%%%%%%%%%%%%%%%%%%%%%%%%%%%%%%%%%%%%
\subsubsection{Vacillating tableaux}
\label{section.vacillating}
Next we define \defn{vacillating tableaux} which correspond to highest weight elements in $\mathcal{B}_\square^{\otimes n}$ of type $B_r$.

\begin{definition}
A \defn{$(2r+1)$-orthogonal vacillating tableau} of length $n$ is a sequence of partitions $\vac = (\emptyset = \lambda^0,\dots, \lambda^n)$ such that:
\begin{enumerate}[(i)]
\item $\lambda^i$ has at most $r$ parts.
\item Two consecutive partitions either differ by a box or are equal.
\item If two consecutive partitions are equal, then all their $r$ parts are greater than $0$.
\end{enumerate}
We call $\lambda^n$ the \defn{weight} of $\vac$.
\end{definition}

A highest weight element $u = u_n \otimes \cdots \otimes u_1 \in \mathcal{B}_\square^{\otimes n}$ of type $B_r$ corresponds to the $(2r+1)$-vacillating
tableau $(\emptyset=\lambda^0,\lambda^1,\ldots,\lambda^n)$, where $\lambda^q=\sum_{i=1}^q \mathsf{wt}(u_i)$.

Note that $\mathcal{B}_\square$ is not minuscule. The crystal $\mathcal{B}_\square$ is isomorphic to the component with highest weight
element $(+,-,\ldots,-) \otimes (+,\ldots,+)$ in $\mathcal{B}_{\mathsf{spin}} \otimes \mathcal{B}_{\mathsf{spin}}$, see Figure~\ref{figure.B vector}.
From this we obtain a map from the set of vacillating tableaux of weight zero and length $n$ into the set of fans of Dyck paths of length $2n$ that we now
explain. Denote by $\mathbf{1}$ the vector $\mathbf{e}_1+\mathbf{e}_2+\dots+\mathbf{e}_r$ and write $\rho<\nu$ if $\nu=\rho+\mathbf{e}_i$ for some $i$.

\begin{figure}
\scalebox{0.6}{
\begin{tikzpicture}[>=latex,line join=bevel,]
\node (node_0) at (8.0bp,390.0bp) [draw,draw=none] {${\def\lr#1{\multicolumn{1}{|@{\hspace{.6ex}}c@{\hspace{.6ex}}|}{\raisebox{-.3ex}{$#1$}}}\raisebox{-.6ex}{$\begin{array}[b]{*{1}c}\cline{1-1}\lr{2}\\\cline{1-1}\end{array}$}}$};
  \node (node_1) at (8.0bp,86.0bp) [draw,draw=none] {${\def\lr#1{\multicolumn{1}{|@{\hspace{.6ex}}c@{\hspace{.6ex}}|}{\raisebox{-.3ex}{$#1$}}}\raisebox{-.6ex}{$\begin{array}[b]{*{1}c}\cline{1-1}\lr{\overline{2}}\\\cline{1-1}\end{array}$}}$};
  \node (node_2) at (8.0bp,10.0bp) [draw,draw=none] {${\def\lr#1{\multicolumn{1}{|@{\hspace{.6ex}}c@{\hspace{.6ex}}|}{\raisebox{-.3ex}{$#1$}}}\raisebox{-.6ex}{$\begin{array}[b]{*{1}c}\cline{1-1}\lr{\overline{1}}\\\cline{1-1}\end{array}$}}$};
  \node (node_3) at (8.0bp,466.0bp) [draw,draw=none] {${\def\lr#1{\multicolumn{1}{|@{\hspace{.6ex}}c@{\hspace{.6ex}}|}{\raisebox{-.3ex}{$#1$}}}\raisebox{-.6ex}{$\begin{array}[b]{*{1}c}\cline{1-1}\lr{1}\\\cline{1-1}\end{array}$}}$};
  \node (node_4) at (8.0bp,162.0bp) [draw,draw=none] {${\def\lr#1{\multicolumn{1}{|@{\hspace{.6ex}}c@{\hspace{.6ex}}|}{\raisebox{-.3ex}{$#1$}}}\raisebox{-.6ex}{$\begin{array}[b]{*{1}c}\cline{1-1}\lr{\overline{3}}\\\cline{1-1}\end{array}$}}$};
  \node (node_5) at (8.0bp,238.0bp) [draw,draw=none] {${\def\lr#1{\multicolumn{1}{|@{\hspace{.6ex}}c@{\hspace{.6ex}}|}{\raisebox{-.3ex}{$#1$}}}\raisebox{-.6ex}{$\begin{array}[b]{*{1}c}\cline{1-1}\lr{0}\\\cline{1-1}\end{array}$}}$};
  \node (node_6) at (8.0bp,314.0bp) [draw,draw=none] {${\def\lr#1{\multicolumn{1}{|@{\hspace{.6ex}}c@{\hspace{.6ex}}|}{\raisebox{-.3ex}{$#1$}}}\raisebox{-.6ex}{$\begin{array}[b]{*{1}c}\cline{1-1}\lr{3}\\\cline{1-1}\end{array}$}}$};
  \draw [red,->] (node_0) ..controls (8.0bp,368.79bp) and (8.0bp,349.03bp)  .. (node_6);
  \definecolor{strokecol}{rgb}{0.0,0.0,0.0};
  \pgfsetstrokecolor{strokecol}
  \draw (17.0bp,352.0bp) node {$2$};
  \draw [blue,->] (node_1) ..controls (8.0bp,64.789bp) and (8.0bp,45.027bp)  .. (node_2);
  \draw (17.0bp,48.0bp) node {$1$};
  \draw [blue,->] (node_3) ..controls (8.0bp,444.79bp) and (8.0bp,425.03bp)  .. (node_0);
  \draw (17.0bp,428.0bp) node {$1$};
  \draw [red,->] (node_4) ..controls (8.0bp,140.79bp) and (8.0bp,121.03bp)  .. (node_1);
  \draw (17.0bp,124.0bp) node {$2$};
  \draw [green,->] (node_5) ..controls (8.0bp,216.79bp) and (8.0bp,197.03bp)  .. (node_4);
  \draw (17.0bp,200.0bp) node {$3$};
  \draw [green,->] (node_6) ..controls (8.0bp,292.79bp) and (8.0bp,273.03bp)  .. (node_5);
  \draw (17.0bp,276.0bp) node {$3$};
\end{tikzpicture}
}
\hspace{3cm}
\scalebox{0.45}{
\begin{tikzpicture}[>=latex,line join=bevel,]
\node (node_0) at (39.0bp,22.0bp) [draw,draw=none] {${\def\lr#1{\multicolumn{1}{|@{\hspace{.6ex}}c@{\hspace{.6ex}}|}{\raisebox{-.3ex}{$#1$}}}\raisebox{-.6ex}{$\begin{array}[b]{*{1}c}\cline{1-1}\lr{-}\\\cline{1-1}\lr{-}\\\cline{1-1}\lr{-}\\\cline{1-1}\end{array}$}} \otimes {\def\lr#1{\multicolumn{1}{|@{\hspace{.6ex}}c@{\hspace{.6ex}}|}{\raisebox{-.3ex}{$#1$}}}\raisebox{-.6ex}{$\begin{array}[b]{*{1}c}\cline{1-1}\lr{+}\\\cline{1-1}\lr{+}\\\cline{1-1}\lr{-}\\\cline{1-1}\end{array}$}}$};
  \node (node_1) at (39.0bp,322.0bp) [draw,draw=none] {${\def\lr#1{\multicolumn{1}{|@{\hspace{.6ex}}c@{\hspace{.6ex}}|}{\raisebox{-.3ex}{$#1$}}}\raisebox{-.6ex}{$\begin{array}[b]{*{1}c}\cline{1-1}\lr{+}\\\cline{1-1}\lr{-}\\\cline{1-1}\lr{-}\\\cline{1-1}\end{array}$}} \otimes {\def\lr#1{\multicolumn{1}{|@{\hspace{.6ex}}c@{\hspace{.6ex}}|}{\raisebox{-.3ex}{$#1$}}}\raisebox{-.6ex}{$\begin{array}[b]{*{1}c}\cline{1-1}\lr{-}\\\cline{1-1}\lr{+}\\\cline{1-1}\lr{+}\\\cline{1-1}\end{array}$}}$};
  \node (node_2) at (39.0bp,422.0bp) [draw,draw=none] {${\def\lr#1{\multicolumn{1}{|@{\hspace{.6ex}}c@{\hspace{.6ex}}|}{\raisebox{-.3ex}{$#1$}}}\raisebox{-.6ex}{$\begin{array}[b]{*{1}c}\cline{1-1}\lr{+}\\\cline{1-1}\lr{-}\\\cline{1-1}\lr{-}\\\cline{1-1}\end{array}$}} \otimes {\def\lr#1{\multicolumn{1}{|@{\hspace{.6ex}}c@{\hspace{.6ex}}|}{\raisebox{-.3ex}{$#1$}}}\raisebox{-.6ex}{$\begin{array}[b]{*{1}c}\cline{1-1}\lr{+}\\\cline{1-1}\lr{+}\\\cline{1-1}\lr{+}\\\cline{1-1}\end{array}$}}$};
  \node (node_3) at (39.0bp,522.0bp) [draw,draw=none] {${\def\lr#1{\multicolumn{1}{|@{\hspace{.6ex}}c@{\hspace{.6ex}}|}{\raisebox{-.3ex}{$#1$}}}\raisebox{-.6ex}{$\begin{array}[b]{*{1}c}\cline{1-1}\lr{-}\\\cline{1-1}\lr{+}\\\cline{1-1}\lr{-}\\\cline{1-1}\end{array}$}} \otimes {\def\lr#1{\multicolumn{1}{|@{\hspace{.6ex}}c@{\hspace{.6ex}}|}{\raisebox{-.3ex}{$#1$}}}\raisebox{-.6ex}{$\begin{array}[b]{*{1}c}\cline{1-1}\lr{+}\\\cline{1-1}\lr{+}\\\cline{1-1}\lr{+}\\\cline{1-1}\end{array}$}}$};
  \node (node_4) at (39.0bp,222.0bp) [draw,draw=none] {${\def\lr#1{\multicolumn{1}{|@{\hspace{.6ex}}c@{\hspace{.6ex}}|}{\raisebox{-.3ex}{$#1$}}}\raisebox{-.6ex}{$\begin{array}[b]{*{1}c}\cline{1-1}\lr{-}\\\cline{1-1}\lr{-}\\\cline{1-1}\lr{-}\\\cline{1-1}\end{array}$}} \otimes {\def\lr#1{\multicolumn{1}{|@{\hspace{.6ex}}c@{\hspace{.6ex}}|}{\raisebox{-.3ex}{$#1$}}}\raisebox{-.6ex}{$\begin{array}[b]{*{1}c}\cline{1-1}\lr{-}\\\cline{1-1}\lr{+}\\\cline{1-1}\lr{+}\\\cline{1-1}\end{array}$}}$};
  \node (node_5) at (39.0bp,122.0bp) [draw,draw=none] {${\def\lr#1{\multicolumn{1}{|@{\hspace{.6ex}}c@{\hspace{.6ex}}|}{\raisebox{-.3ex}{$#1$}}}\raisebox{-.6ex}{$\begin{array}[b]{*{1}c}\cline{1-1}\lr{-}\\\cline{1-1}\lr{-}\\\cline{1-1}\lr{-}\\\cline{1-1}\end{array}$}} \otimes {\def\lr#1{\multicolumn{1}{|@{\hspace{.6ex}}c@{\hspace{.6ex}}|}{\raisebox{-.3ex}{$#1$}}}\raisebox{-.6ex}{$\begin{array}[b]{*{1}c}\cline{1-1}\lr{+}\\\cline{1-1}\lr{-}\\\cline{1-1}\lr{+}\\\cline{1-1}\end{array}$}}$};
  \node (node_6) at (39.0bp,622.0bp) [draw,draw=none] {${\def\lr#1{\multicolumn{1}{|@{\hspace{.6ex}}c@{\hspace{.6ex}}|}{\raisebox{-.3ex}{$#1$}}}\raisebox{-.6ex}{$\begin{array}[b]{*{1}c}\cline{1-1}\lr{-}\\\cline{1-1}\lr{-}\\\cline{1-1}\lr{+}\\\cline{1-1}\end{array}$}} \otimes {\def\lr#1{\multicolumn{1}{|@{\hspace{.6ex}}c@{\hspace{.6ex}}|}{\raisebox{-.3ex}{$#1$}}}\raisebox{-.6ex}{$\begin{array}[b]{*{1}c}\cline{1-1}\lr{+}\\\cline{1-1}\lr{+}\\\cline{1-1}\lr{+}\\\cline{1-1}\end{array}$}}$};
  \draw [green,->] (node_1) ..controls (39.0bp,286.67bp) and (39.0bp,268.86bp)  .. (node_4);
  \definecolor{strokecol}{rgb}{0.0,0.0,0.0};
  \pgfsetstrokecolor{strokecol}
  \draw (48.0bp,272.0bp) node {$3$};
  \draw [green,->] (node_2) ..controls (39.0bp,386.67bp) and (39.0bp,368.86bp)  .. (node_1);
  \draw (48.0bp,372.0bp) node {$3$};
  \draw [red,->] (node_3) ..controls (39.0bp,486.67bp) and (39.0bp,468.86bp)  .. (node_2);
  \draw (48.0bp,472.0bp) node {$2$};
  \draw [red,->] (node_4) ..controls (39.0bp,186.67bp) and (39.0bp,168.86bp)  .. (node_5);
  \draw (48.0bp,172.0bp) node {$2$};
  \draw [blue,->] (node_5) ..controls (39.0bp,86.673bp) and (39.0bp,68.862bp)  .. (node_0);
  \draw (48.0bp,72.0bp) node {$1$};
  \draw [blue,->] (node_6) ..controls (39.0bp,586.67bp) and (39.0bp,568.86bp)  .. (node_3);
  \draw (48.0bp,572.0bp) node {$1$};
\end{tikzpicture}
}
\caption{Left: $\mathcal{B}_\square$ of type $B_3$, Right: The component in $\mathcal{B}_{\mathsf{spin}} \otimes \mathcal{B}_{\mathsf{spin}}$ of type $B_3$
isomorphic to $\mathcal{B}_\square$.
\label{figure.B vector}}
\end{figure}

\begin{definition}
\label{definition.vactofan}
For a vacillating tableau of weight zero $\vac=(\emptyset=\lambda^0,\dots,\lambda^n=\emptyset)$ we define the fan of Dyck paths
$\vactofan(\vac)= (\emptyset=\mu^0,\dots,\mu^{2n}=\emptyset)$ as follows:
\begin{align*}
\mu^{2i} &= 2\cdot \lambda^i\\
\mu^{2i-1} &= \begin{cases}
2\cdot \lambda^{i-1} + \mathbf{1}&\text{if $\lambda^{i-1}<\lambda^i$,}\\
2\cdot \lambda^{i} + \mathbf{1}&\text{if $\lambda^{i-1}>\lambda^i$,}\\
2\cdot \lambda^{i-1} + \mathbf{1}-2\mathbf{e}_r &\text{if $\lambda^{i-1}=\lambda^i$.}
\end{cases}
\end{align*}
\end{definition}

Similarly, following Definition~\ref{definition.Psi vector}, we obtain an embedding from the set of vacillating tableaux of weight zero into
the set of oscillating tableaux.

\begin{definition} 
\label{definition.vactoosc}
For a vacillating tableau of weight zero $\vac=(\emptyset=\lambda^0,\dots,\lambda^n=\emptyset)$ we define the oscillating
tableau $\vactoosc(\vac)= (\emptyset=\mu^0,\dots,\mu^{2n}=\emptyset)$ as follows:
\begin{align*}
\mu^{2i} &= 2\cdot \lambda^i\\
\mu^{2i-1} &= \lambda^{i-1} + \lambda^{i} + \begin{cases}
0 &\text{if $\lambda^{i-1}\neq \lambda^i$,}\\
-\mathbf{e}_r &\text{if $\lambda^{i-1}=\lambda^i$.}
\end{cases}
\end{align*}
\end{definition}

%%%%%%%%%%%%%%%%%%%%%%%%%%%%%%%%%%%%%%%%%%%
\subsection{Promotion via crystal commutor}

For finite crystals $B_\lambda$ of classical type of highest weight $\lambda$, Henriques and Kamnitzer~\cite{HK.2006} introduced the crystal
commutor as follows. Let $\eta_{B_\lambda} \colon B_\lambda \to B_\lambda$ be the Lusztig involution, which maps the highest weight vector to the
lowest weight vector and interchanges the crystal operators $f_i$ with $e_{i'}$, where $w_0(\alpha_i) = -\alpha_{i'}$ under the longest element $w_0$.
This can be extended to tensor products of such crystals by mapping each connected component to itself using the above.
Then the \defn{crystal commutor} is defined as
\[
\begin{split}
	\sigma \colon B_\lambda \otimes B_\mu &\to B_\mu \otimes B_\lambda\\
	b \otimes c & \mapsto \eta_{B_\mu \otimes B_\lambda} (\eta_{B_\mu}(c) \otimes \eta_{B_\lambda}(b)).
\end{split}
\]
If we want to emphasize the crystals involved, we write $\sigma_{A,B} \colon A \otimes B \to B \otimes A$.

Following~\cite{FK.2014,Westbury.2016,Westbury.2018}, we define the promotion operator using the crystal commutor.
\begin{definition}
Let $C$ be a crystal and $u \in C^{\otimes n}$ a highest weight element. Then \defn{promotion} $\mathsf{pr}$ on $u$ is defined as
$\sigma_{C^{\otimes n-1},C}(u)$.
\end{definition}

\begin{remark}
Note that inverse promotion is given by $\sigma_{C,C^{\otimes n-1}}(u)$. The conventions in the literature about what is called promotion
and what is called inverse promotion are not always consistent. Our convention here agrees with the definition of promotion on posets that removes
the letters 1 and slides letters (see for example~\cite{Stanley.2009,AKS.2014}).
The convention here is the opposite of the convention on tableaux which removes the largest letter and uses jeu de taquin slides (see for
example~\cite{Rhoades.2010,BST.2010}).
\end{remark}

\begin{example}\label{example.crystalcommmutor}
Consider the crystal $C=B_\square$ of type $A_2$ (see~\cite{BumpSchilling.2017}). Then
\[
	u= 1\otimes 3 \otimes 2 \otimes 2 \otimes 1 \otimes 1 \in C^{\otimes 6}
\]
is highest weight and
\[
	\sigma_{C^{\otimes 5},C}(u) = 2 \otimes 1 \otimes 3 \otimes 1 \otimes 2 \otimes 1.
\]
The recording tableaux for the RSK insertion of the words $132211$ and $213121$ (from right to left) are
\[
	\tikztableausmall{{1,2,6},{3,4},{5}} \qquad \text{and} \qquad
	\tikztableausmall{{1,3,5},{2,6},{4}}
\]
which are related by the usual (inverse) promotion operator (removing the letter 1, doing jeu-de-taquin slides, filling the empty cell with the largest letter
plus one and subtracting 1 from all entries) on standard tableaux.
\end{example}

\begin{example}
Promotion on the element $\epsilon$ in Example~\ref{example.3fan} is
\[
	\sigma_{\mathcal{B}_{\mathsf{spin}}^{\otimes 3},\mathcal{B}_{\mathsf{spin}}}(\epsilon)
	= (-,-,-) \otimes (-,+,+) \otimes (+,-,-) \otimes (+,+,+).
\]
\end{example}

Note that if $\Psi \colon C \to \mathcal{V} \subseteq \widehat{\mathcal{V}}$ is a virtual embedding, then
\begin{equation}
\label{equation.virtual sigma}
	\Psi \circ \sigma_{C^{\otimes n-1},C} = \sigma_{\widehat{\mathcal{V}}^{\otimes n-1},\widehat{\mathcal{V}}} \circ \Psi
\end{equation}
by Axioms V2 and V3 in Definition~\ref{definition.virtual} as long as the folding $\sigma$ and the multiplication factors $\gamma_i$ respect
the map $w_0(\alpha_i) = -\alpha_{i'}$. This is the case for the virtualizations in this paper.

%%%%%%%%%%%%%%%%%%%%%%%%%%%%%%%%%%%%%%%%%%%
\subsection{Promotion via local rules}
\label{section.promotion local rules}

Adapting local rules of van Leeuwen~\cite{vanLeeuwen.1998}, Lenart~\cite{Lenart.2008} gave a combinatorial realization of the crystal commutor
$\sigma_{A,B}$ by constructing an equivalent bijection between the highest weight elements of $A \otimes B$ and $B \otimes A$ respectively.
The\defn{ local rules} of Lenart~\cite{Lenart.2008} can be stated as follows:
four weight vectors $\lambda, \mu, \kappa, \nu \in \Lambda$ depicted in a square diagram
$\begin{tikzpicture}[scale=0.5]
\draw (0,0) rectangle (2,2);
\node at (0,2)[anchor=south west]{$\lambda$};
\node at (2,2)[anchor=south west]{$\nu$};
\node at (0,0)[anchor=south west]{$\kappa$};
\node at (2,0)[anchor=south west]{$\mu$};
\node at (0,)[anchor=south east]{};
\end{tikzpicture}$
satisfy the local rule, if $\mu = \dom_{W}(\kappa+\nu-\lambda)$, where $W$ is the Weyl group of the root system $\Phi$ underlying $A$ and $B$.
Furthermore, $\dom_W(\rho)$ is the dominant weight in the Weyl orbit of $\rho$.

\begin{theorem}[\protect{\cite[Theorem~4.4]{Lenart.2008}}]\label{thm:local_commutor}
  Let $A$ and $B$ be crystals embedded into tensor products
  $A_\ell \otimes\dots\otimes A_1$ and $B_k\otimes\dots\otimes B_1$ of
  crystals of minuscule representations, respectively.  Let
  $w=w_{k+\ell} \otimes \dots \otimes w_{1}$ be a highest weight element in $A\otimes B$
  with corresponding tableau $(\emptyset = \mu^0, \mu^1, \ldots, \mu^{k+\ell} = \mu)$
  Then
  $\sigma_{A,B}(w)$ can be computed as follows.  Create a
  $k\times \ell$ grid of squares as in \eqref{eq:local_commutor}, labelling
  the edges along the left border with $w_1,\dots,w_k$ and the edges
  along the top border with $w_{k+1},\dots,w_{k+\ell}$:
  \begin{equation}\label{eq:local_commutor}
    \tikzset{ short/.style={ shorten >=7pt, shorten <=7pt } }
    \begin{tikzpicture}[baseline=(current bounding box.center), scale=1.2]
      \node at (0,0) {$\mu^0$};
      \node at (0,1) {$\mu^1$};
      \node at (0,2) {$\mu^{k-1}$};
      \node at (0,3) {$\mu^k$};
      \node at (1,0) {$\hat \mu^1$};
      \node at (3,0) {$\hat \mu^{\ell-1}$};
      \node at (4,0) {$\hat \mu^{\ell}$};
      \node[right] at (4,3) {{\hbox to 5pt{\hss $\mu^{k+\ell}$\hss}}};
      % vertical
      \draw[->,short] (0,0) -- (0,1) node [midway, left] {$w_1$};
      \draw[dotted,short] (0,1) -- (0,2) node {};
      \draw[->,short] (0,2) -- (0,3) node [midway, left] {$w_k$};
      \draw[->,short] (1,0) -- (1,1) node {};
      \draw[dotted,short] (1,1) -- (1,2) node {};
      \draw[->,short] (1,2) -- (1,3) node {};
      \draw[->,short] (3,0) -- (3,1) node {};
      \draw[dotted,short] (3,1) -- (3,2) node {};
      \draw[->,short] (3,2) -- (3,3) node {};
      \draw[->,short] (4,0) -- (4,1) node [midway, right] {$\hat w_{1+\ell}$};
      \draw[dotted,short] (4,1) -- (4,2) node {};
      \draw[->,short] (4,2) -- (4,3) node [midway, right] {$\hat w_{k+\ell}$};
      % horizontal
      \draw[->,short] (0,3) -- (1,3) node [midway, above] {$w_{k+1}$};
      \draw[dotted,short] (1,3) -- (3,3) node {};
      \draw[->,short] (3,3) -- (4,3) node [midway, above] {$w_{k+\ell}$};
      \draw[->,short] (0,2) -- (1,2) node {};
      \draw[dotted,short] (1,2) -- (3,2) node {};
      \draw[->,short] (3,2) -- (4,2) node {};
      \draw[->,short] (0,1) -- (1,1) node {};
      \draw[dotted,short] (1,1) -- (3,1) node {};
      \draw[->,short] (3,1) -- (4,1) node {};
      \draw[->,short] (0,0) -- (1,0) node [midway, below] {$\hat w_1$};
      \draw[dotted,short] (1,0) -- (3,0) node {};
      \draw[->,short] (3,0) -- (4,0) node [midway, below] {$\hat w_\ell$};
    \end{tikzpicture}
  \end{equation}
  For each square use the local rule to compute the weight vectors on the square's corners. Given a horizontal edge
from $\kappa$ to $\mu$  in the $j$th column, label the edge by the element in $A_{j}$ with weight $\mu-\kappa$. Similarly, given a vertical edge from $\mu$ to $\nu$ in the $i$th row, label the edge by the element in $B_{i}$ with weight $\nu-\mu$. The labels $\hat w_{k+\ell}\dots \hat w_{1}$ of the
  edges along the right and the bottom border of the grid then form
  $\sigma_{A,B}(w)$ with corresponding tableau $(\emptyset = \mu^{0}, \hat \mu^{1}, \ldots, \hat \mu^{k+\ell-1}, \mu^{k+\ell} = \mu)$.
\end{theorem}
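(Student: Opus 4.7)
The plan is to reduce to the base case $k=\ell=1$ and then induct on the grid size, leveraging the fact that the crystal commutor is built functorially from the Lusztig involutions.

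First, I would verify the base case $k=\ell=1$ directly. Here $A=A_1$ and $B=B_1$ are both minuscule, so a highest weight element of $A \otimes B$ corresponds to a path $\mu^0 \to \mu^1 \to \mu^2$ with each step adding one minuscule weight. I need to check that $\sigma_{A_1,B_1}$ sends this to the path $\mu^0 \to \hat{\mu}^1 \to \mu^2$, where $\hat{\mu}^1 = \dom_W(\mu^0 + \mu^2 - \mu^1)$. Using $\sigma(b \otimes c) = \eta_{B\otimes A}(\eta_B(c) \otimes \eta_A(b))$, I would expand the Lusztig involutions explicitly. The key point is that for minuscule crystals, the connected component of $B \otimes A$ containing $\eta_B(c) \otimes \eta_A(b)$ has a highest weight that can be read off from the weights $\mu^0, \mu^1, \mu^2$, and a Weyl-group calculation identifies its intermediate weight with $\dom_W(\mu^0 + \mu^2 - \mu^1)$. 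The minusculity is essential because it guarantees that the local rule has a unique solution and that each edge of the grid can be labeled unambiguously by an element of the appropriate minuscule crystal factor.

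For the inductive step, I would use the following decomposition. Suppose $A = A' \otimes A_k$ where $A'$ is the tensor product of the first $k-1$ minuscule factors. Then by the naturality/coherence of the commutor (the cactus group relations of Henriques--Kamnitzer), $\sigma_{A,B}$ can be obtained by first applying $\sigma_{A_k, B}$ to the bottom row of the grid, then applying $\sigma_{A', B}$ to the resulting configuration, in a way that corresponds exactly to filling in the bottom row of squares before moving up. Symmetrically, $B$ can be decomposed column by column. Thus the whole grid is built up by repeatedly applying the base-case rule, and the result is a well-defined highest weight element of $B \otimes A$ whose path along the right and bottom borders reads off $\sigma_{A,B}(w)$.

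The main obstacle is the coherence statement that allows the global commutor to be computed as a composition of local commutors on adjacent squares. This is where the cactus group axioms (or equivalently the octahedron/Jacobi-style identity for commutors on three tensor factors) enter; one must check that the filling of any square in the interior of the grid is independent of the order in which surrounding squares are filled, and that each such filling is compatible with the Lusztig involution definition. A secondary technical point is verifying that the edge labelings produced by the local rule along the right and bottom borders indeed come from the correct minuscule factors (i.e., the $j$-th vertical edge on the right is labeled by an element of $B_j$, matching the original labeling on the left), which follows from the uniqueness of the minuscule edge with prescribed weight difference.
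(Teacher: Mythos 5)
First, a remark on the comparison: the paper offers no proof of this statement at all --- it is quoted from \cite[Theorem~4.4]{Lenart.2008} --- so your attempt has to be measured against Lenart's argument rather than anything in this paper. Your base case is fine but nearly vacuous: for $k=\ell=1$ the element $\sigma_{A_1,B_1}(w)$ is forced to be the unique highest weight element of $B_1\otimes A_1$ of weight $\mu^2$, whose first step is necessarily the dominant weight of the minuscule crystal $A_1$, i.e.\ $\dom_W(\mu^2-\mu^1)$; no real work happens there.

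The genuine gap is the inductive step. The identity you need --- that $\sigma_{A'\otimes A_k,B}$ factors as a composition of $\sigma_{A_k,B}$ and $\sigma_{A',B}$ (suitably tensored with identities), so that the grid may be filled one row of squares at a time by ``local commutors'' --- is a hexagon/braiding identity. The Henriques--Kamnitzer commutor does not satisfy the hexagon axiom: it makes the category of crystals coboundary, not braided, and the cactus relations you invoke, namely $\sigma_{B,A}\circ\sigma_{A,B}=\mathrm{id}$ and $\sigma_{A,C\otimes B}\circ(\mathrm{id}_A\otimes\sigma_{B,C})=\sigma_{B\otimes A,C}\circ(\sigma_{A,B}\otimes\mathrm{id}_C)$, do not yield a row-by-row factorization. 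So the coherence statement on which your entire induction rests is not a consequence of what you cite and is false as a statement about commutors; assuming that the interior squares can be interpreted as commutors at all is essentially assuming the theorem. A second, related problem is that the interior squares are not instances of your base case even in spirit: their top-left corner $\lambda$ is a general dominant weight rather than $\emptyset$, and the element read along the top row of the grid, $w_{k+\ell}\otimes\cdots\otimes w_{k+1}\otimes w_k$, need not be highest weight in $A\otimes B_k$ --- it is only highest weight relative to the prefix $w_{k-1}\otimes\cdots\otimes w_1$ --- so you would need a ``relative'' version of the two-factor computation, which you never formulate. Lenart's actual proof sidesteps both issues: it starts from the defining formula $\sigma_{A,B}(b\otimes c)=\eta_{B\otimes A}(\eta_B(c)\otimes\eta_A(b))$ and uses van Leeuwen's growth-diagram description \cite{vanLeeuwen.1998} of the Lusztig involution $\eta$ on a tensor product of minuscule crystals; the $k\times\ell$ rectangle of local rules arises from composing and cancelling those growth diagrams, not from composing commutors square by square.
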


\begin{example}
Performing Lenart's local rules on the elements in Example~\ref{example.crystalcommmutor} gives
\[
\tikzset{ vshort/.style={ shorten >=7pt, shorten <=7pt }, hshort/.style={ shorten >=20pt, shorten <=20pt } }
\begin{tikzpicture}[baseline=(current bounding box.center), scale=1.1]
      \node at (0,0) {$(0,0,0)$};
      \node at (2,0) {$(1,0,0)$};
      \node at (4,0) {$(1,1,0)$};
      \node at (6,0) {$(2,1,0)$};
      \node at (8,0) {$(2,1,1)$};
      \node at (10,0) {$(3,1,1)$};
      \node at (0,1) {$(1,0,0)$};
      \node at (2,1) {$(2,0,0)$};
      \node at (4,1) {$(2,1,0)$};
      \node at (6,1) {$(2,2,0)$};
      \node at (8,1) {$(2,2,1)$};
      \node at (10,1) {$(3,2,1)$};
      % vertical
      \draw[->,vshort] (0,0) -- (0,1) node [midway, left] {$1$};
      \draw[->,vshort] (2,0) -- (2,1) node [midway, left] {$1$};
      \draw[->,vshort] (4,0) -- (4,1) node [midway, left]{$1$};
      \draw[->,vshort] (6,0) -- (6,1) node [midway, left]{$2$};
      \draw[->,vshort] (8,0) -- (8,1) node [midway, left]{$2$};
      \draw[->,vshort] (10,0) -- (10,1) node [midway, right]{$2$};
      % horizontal
      \draw[->,hshort] (0,0) -- (2,0) node [midway, below] {$1$};
      \draw[->,hshort] (2,0) -- (4,0) node  [midway, below]{$2$};
      \draw[->,hshort] (4,0) -- (6,0) node [midway, below] {$1$};
      \draw[->,hshort] (6,0) -- (8,0) node  [midway, below]{$3$};
      \draw[->,hshort] (8,0) -- (10,0) node  [midway, below]{$1$};
      \draw[->,hshort] (0,1) -- (2,1) node  [midway, above]{$1$};
      \draw[->,hshort] (2,1) -- (4,1) node  [midway, above] {$2$};
      \draw[->,hshort] (4,1) -- (6,1) node [midway, above]{$2$};
      \draw[->,hshort] (6,1) -- (8,1) node [midway, above] {$3$};
      \draw[->,hshort] (8,1) -- (10,1) node  [midway, above]{$1$};
 \end{tikzpicture}
\]
which recovers $\sigma_{C^{\otimes 5},C}(1\otimes 3 \otimes 2 \otimes 2 \otimes 1 \otimes 1) = 2 \otimes 1 \otimes 3 \otimes 1 \otimes 2 \otimes 1$.
\end{example}

%%%%%%%%%%%%%%%%%%%%%%%%%%%%%%%%%%%%%%%%%%%
\section{Chord diagrams}
\label{section.chord}
%%%%%%%%%%%%%%%%%%%%%%%%%%%%%%%%%%%%%%%%%%%

%%%%%%%%%%%%%%%%%%%%%%%%%%%%%%%%%%%%%%%%%%%
\subsection{Promotion matrices}
\label{section.promotion diagrams}

\begin{figure}
\begin{center}
\begin{tabular}{p{0.20\textwidth}|p{0.16\textwidth}|p{0.16\textwidth}|p{0.16\textwidth}|p{0.15\textwidth}}\footnotesize
{\bf1.} Calculate promotion over and over again using a calculation schema & \footnotesize {\bf2.} Cut and glue the schema to obtain a
square &\footnotesize {\bf3.} Fill all cells according to a function $\filling$ with integers & \footnotesize{\bf4.} Interpret the filled square as
adjacency matrix of a graph &\footnotesize {\bf5.} Read the chord diagram from the adjacency matrix.\\\hline\begin{center}\vspace{1.3em}
\begin{tikzpicture}[scale=0.33,baseline={([yshift=-.8ex]current bounding box.center)}]
\draw[very thick,fill=gray!35] (4,4) -- (4,3) -- (5,3) -- (5,2) -- (6,2) -- (6,1) -- (7,1) -- (7,0) -- (8,0) -- (4,0) -- (4,4);
\draw[very thick,fill=gray] (0,4) -- (1,4) -- (1,3) -- (2,3) -- (2,2) -- (3,2) -- (3,1) -- (4,1) -- (4,0) -- (4,4) -- (0,4);
\draw (0,4) -- (4,4) -- (4,0);
\draw (1,4) -- (1,3) -- (5,3) -- (5,0);
\draw (2,4) -- (2,2) -- (6,2) -- (6,0);
\draw (3,4) -- (3,1) -- (7,1) -- (7,0);
\draw (4,0) -- (8,0) -- (8,0);
\end{tikzpicture}\end{center} & \begin{center}\vspace{1.4em}
\begin{tikzpicture}[scale=0.5,baseline={([yshift=-.8ex]current bounding box.center)},scale=0.7]
\draw[very thick,fill=gray!35] (0,4) -- (0,3) -- (1,3) -- (1,2) -- (2,2) -- (2,1) -- (3,1) -- (3,0) -- (4,0) -- (0,0) -- (0,4);
\draw[very thick,fill=gray] (0,4) -- (1,4) -- (1,3) -- (2,3) -- (2,2) -- (3,2) -- (3,1) -- (4,1) -- (4,0) -- (4,4) -- (0,4);
\foreach \i in {0,1,2,3,4} {
\draw (0,\i) -- (4,\i);
\draw (\i,0) -- (\i,4);
}
\end{tikzpicture}\end{center}
&\begin{center}
\begin{tikzpicture}
\draw (-0.1,-0.1) rectangle (1.6,1.6);
\node at (0,1.6)[anchor=south west]{$\lambda$};
\node at (1.6,1.6)[anchor=south west]{$\nu$};
\node at (0,0)[anchor=south west]{$\kappa$};
\node at (1.6,0)[anchor=south west]{$\mu$};
\node at (0.8,0.8) {\footnotesize $\filling(\lambda,\kappa,\nu,\mu)$};
\end{tikzpicture}\end{center}
&\begin{center}\vspace{1.2em}\scalebox{0.6}{$
\begin{pmatrix}
\cdot &\cdot &\cdot &\cdot &\cdot&\cdot\\
\cdot &\cdot &\cdot &\cdot &\cdot&\cdot\\
\cdot &\cdot &\cdot &\cdot &\cdot&\cdot\\
\cdot &\cdot &\cdot &\cdot &\cdot&\cdot\\
\cdot &\cdot &\cdot &\cdot &\cdot&\cdot\\
\cdot &\cdot &\cdot &\cdot &\cdot&\cdot
\end{pmatrix}
$}\end{center}
&\begin{center}\vspace{0.9em}\hspace{-1em}
\newcommand\n{8}
    \scalebox{0.6}{\begin{tikzpicture}[baseline={([yshift=-.8ex]current bounding box.center)},scale=0.75]
        \foreach \number in {1,...,\n}{
            \node (N-\number) at ({\number*(360/\n)}:2cm) {$\circ$};
        }

        \foreach \number in {1,...,\n}{
            \foreach \y in {1,...,\n}{
                \draw (N-\number) -- (N-\y);
            }
        }
    \end{tikzpicture}}
\end{center}
\end{tabular}
\end{center}
\caption{Overview of the steps in our map\label{fig:overview}}
\end{figure}

In this section we summarize the method developed in~\cite{Pfannerer.2022} to obtain a map from
highest weight words of weight zero to chord diagrams that intertwines promotion and rotation.

We start with the definition of chord diagrams and their rotation.

\begin{definition}
A \defn{chord diagram} of size $n$ is a graph with $n$ vertices depicted on a circle which are labelled $1,\dots, n$ in counter-clockwise orientation.

The \defn{rotation} of a chord diagram is obtained by rotating all edges clockwise by $\frac{2\pi}{n}$ around the center of the diagram.
\end{definition}

In our setting all chord diagrams are undirected graphs with possibly multiple edges between the same two vertices. We can therefore identify
chord diagrams with their \defn{adjacency matrix}. The adjacency matrix is a symmetric $n\times n$ matrix $M=(m_{ij})_{1\leqslant i,j\leqslant n}$
with non-negative integer entries and $m_{ij}$ denotes the number of edges between vertex $i$ and vertex $j$.

\begin{proposition}[\cite{Pfannerer.2022}]
\label{proposition:rotation_via_toroidal}
Let $M$ be the adjacency matrix of a chord diagram $G$. Denote by $\rot M$ the toroidal shift of $M$, that is, the matrix obtained from $M$ by
first cutting the top row and pasting it to the bottom and then cutting the leftmost column and pasting it to the right.

Then $\rot M$ is the adjacency matrix corresponding to the rotation of $G$.
\end{proposition}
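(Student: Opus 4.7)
The plan is to reduce both operations---rotation of the chord diagram $G$ and the toroidal shift of its adjacency matrix $M$---to explicit formulas for the $(i,j)$-entry, and then simply check that the two formulas agree. This is a pure indexing verification and I do not anticipate any serious obstacle; the only subtle point is keeping track of the direction conventions.

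First I would place vertex $k$ at angle $\theta_k = \frac{2\pi k}{n}$, so that the labels $1,\dots,n$ appear in counter-clockwise order as required. Since rotation by definition acts only on the edges while the vertex labels stay fixed on the circle, an edge endpoint sitting at vertex $i$ is carried under the clockwise rotation by $\frac{2\pi}{n}$ to angle $\theta_i - \frac{2\pi}{n} = \theta_{i-1}$, that is, to the position of vertex $i-1$, with indices interpreted modulo $n$. Consequently, each edge $\{i,j\}$ of $G$ becomes the edge $\{i-1,\,j-1\}$ of the rotated diagram. Writing $M' = (m'_{ij})$ for the adjacency matrix of the rotation of $G$, one obtains
\[
m'_{ij} \;=\; m_{i+1,\,j+1} \qquad (\text{indices mod } n).
\]

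Next I would verify that the toroidal shift defined in the statement produces exactly the same matrix. Cutting the top row of $M$ and pasting it at the bottom yields an intermediate matrix whose $(i,j)$-entry is $m_{i+1,\,j}$; cutting the leftmost column of this intermediate matrix and pasting it on the right then yields $(\rot M)_{ij} = m_{i+1,\,j+1}$. This coincides entry-wise with $m'_{ij}$ from the previous paragraph, so $\rot M$ is indeed the adjacency matrix of the rotated diagram, as claimed. The one thing to be careful about is the sign in the index shift: because vertices are labelled counter-clockwise while the rotation is clockwise, the bijection on vertex labels is $i \mapsto i-1$, which is precisely the inverse of the shift $(i,j)\mapsto(i+1,j+1)$ on matrix indices, and it is exactly this pairing of inverse shifts that makes the two operations correspond.
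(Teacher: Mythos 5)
Your verification is correct, and both index computations (edge $\{i,j\}\mapsto\{i-1,j-1\}$ under clockwise rotation of counter-clockwise labels, hence $m'_{ij}=m_{i+1,j+1}$, matching the toroidal shift) agree with the convention the paper itself uses later in the proof of Proposition~\ref{proposition:rotation_promotion}. The paper cites this statement from the reference without reproducing a proof, and your argument is the standard direct check one would give there.
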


Let us now outline the idea to construct such a rotation and promotion intertwining map and then provide the details on the individual steps on the
examples of oscillating tableaux, $r$-fans of Dyck paths and vacillating tableaux. A visual guideline can be seen in Figure~\ref{fig:overview}.

\begin{construction}\label{construction:tableau_to_chord}
The construction is given as follows:
\begin{description}
\item[Step 1] Iteratively calculate promotion of a highest weight word of weight zero and length $n$ using Lenart's schema \eqref{eq:local_commutor}
a total of $n$ times.
\item[Step 2] Group the results into a square grid, called the \defn{promotion matrix}.
\item[Step 3] Fill the cells of the square grid with certain non-negative integers according to a filling rule $\filling$ that only depends on the four corners
of the cells in the schema \eqref{eq:local_commutor}.
\item[Step 4] Regard the filling as the adjacency matrix of a graph, which is the chord diagram.
\end{description}
\end{construction}

We now discuss the filling rules in the various cases. Note that the filling rules are new even in the case of oscillating tableaux as the proofs
in~\cite{PfannererRubeyWestbury.2020} did not follow this construction.

%%%%%%%%%%%%%%%%%%%%%%%%%%%%%%%%%%%%%%%%%%%%
\subsubsection{Chord diagrams for oscillating tableaux}
Recall that the Weyl group of type $C_r$ is the hyperoctahedral group $\fH_r$ of signed permutations of $\{\pm 1, \pm 2, \dots,\pm r\}$.
Weights are elements in $\mathbb{Z}^r$ and dominant weights are weakly decreasing integer vectors with non-negative entries (or equivalently partitions).
Thus, the dominant representative $\dom_{\fH_r}(\lambda)$ of a weight $\lambda$ is obtained by sorting the absolute values of its entries
into weakly decreasing order.

We slightly modify Lenart's schema for the crystal commutor \eqref{eq:local_commutor} by omitting edge labels as only the weights on the corners are
needed. Additionally, given an oscillating tableau $\osc=(\emptyset=\mu^0,\mu^1,\dots,\mu^{n}=\mu)$, we start each row with the zero weight
$\emptyset$ and end each row with the weight $\mu$, which makes it easier to iteratively use this schema to calculate promotion.
This way the promotion of the oscillating tableau $\osc=(\emptyset=\mu^0,\mu^1,\dots,\mu^{n}=\mu)$ is the unique sequence
$(\emptyset=\hat\mu^0,\hat\mu^1,\dots, \hat\mu^{n}=\mu)$, such that all squares in the diagram
\[
\tikzset{ short/.style={ shorten >=7pt, shorten <=7pt } }
    \begin{tikzpicture}[baseline=(current bounding box.center), scale=1.4]
    \node at (0,0+0.13)[anchor=west] {$\mu^0$};
    \node at (1,0+0.13)[anchor=west] {$\mu^1$};
    \node at (2,0+0.13)[anchor=west] {$\mu^2$};
    \node at (4,0+0.13)[anchor=west] {$\mu^{n-1}$};
    \node at (5,0+0.13)[anchor=west] {$\mu^{n}$};

    \node at (1,-1+0.13)[anchor=west] {$\hat\mu^0$};
    \node at (2,-1+0.13)[anchor=west] {$\hat\mu^1$};
    \node at (4,-1+0.13)[anchor=west] {$\hat\mu^{n-2}$};
    \node at (5,-1+0.13)[anchor=west] {$\hat\mu^{n-1}$};
    \node at (6,-1+0.13)[anchor=west] {$\hat\mu^{n}$};

    \draw (0,0) -- (2,0);
    \draw[dotted,short] (2,0) -- (4,0);
    \draw (4,0) -- (5,0);
    \draw (1,-1) -- (2,-1);
    \draw[dotted,short] (2,-1) -- (4,-1);
    \draw (4,-1) -- (6,-1);
    \draw (1,0) -- (1,-1);
    \draw (2,0) -- (2,-1);
    \draw (4,0) -- (4,-1);
    \draw (5,0) -- (5,-1);
    \end{tikzpicture}
\]
satisfy the local rule of Section~\ref{section.promotion local rules}.

Using this schema we iteratively calculate promotion a total of $n$ times and depict the results in a diagram as seen in
Figure~\ref{fig:promotion-diagram} on the left. This diagram consists of $n$ promotion schemas glued together.
As $\mathsf{pr}^n =\mathsf{id}$, the labels on the top and the bottom row must be equal to $\mu^0,\dots,\mu^{n}$.

 We now transform this diagram by copying everything to the right of the $n$-th column into the triangular empty space on the left, see
 Figure~\ref{fig:promotion-diagram}. In this way the labels on the right corners of the $n$-th column are duplicated. We obtain an $n \times n$ grid,
 where each corner of a cell is labelled with a dominant weight and the labels on the top and bottom border are equal and the labels on the left and right
 border are equal. This grid is called the \defn{promotion matrix} of $\osc$.

To obtain an adjacency matrix, we fill the cells of this diagram with non-negative integers according to the following rule.

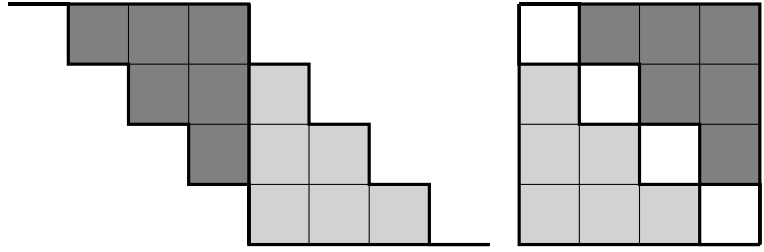
\begin{figure}\[
\begin{array}{cc}
\begin{tikzpicture}[scale=0.8]
\draw[very thick,fill=gray!35] (4,4) -- (4,3) -- (5,3) -- (5,2) -- (6,2) -- (6,1) -- (7,1) -- (7,0) -- (8,0) -- (4,0) -- (4,4);

\draw[very thick,fill=gray] (0,4) -- (1,4) -- (1,3) -- (2,3) -- (2,2) -- (3,2) -- (3,1) -- (4,1) -- (4,0) -- (4,4) -- (0,4);

\draw (0,4) -- (4,4) -- (4,0);
\draw (1,4) -- (1,3) -- (5,3) -- (5,0);
\draw (2,4) -- (2,2) -- (6,2) -- (6,0);
\draw (3,4) -- (3,1) -- (7,1) -- (7,0);
\draw (4,0) -- (8,0) -- (8,0);
\end{tikzpicture}
&
\begin{tikzpicture}[scale=0.8]
\draw[very thick,fill=gray!35] (0,4) -- (0,3) -- (1,3) -- (1,2) -- (2,2) -- (2,1) -- (3,1) -- (3,0) -- (4,0) -- (0,0) -- (0,4);
\draw[very thick,fill=gray] (0,4) -- (1,4) -- (1,3) -- (2,3) -- (2,2) -- (3,2) -- (3,1) -- (4,1) -- (4,0) -- (4,4) -- (0,4);
\foreach \i in {0,1,2,3,4} {
\draw (0,\i) -- (4,\i);
\draw (\i,0) -- (\i,4);
}
\end{tikzpicture}
\end{array}\]
\caption{The transformation into a promotion matrix. The highlighted part is cut away and glued on the left.}
\label{fig:promotion-diagram}
\end{figure}

\begin{definition}
The \defn{filling rule} for oscillating tableaux is
\begin{equation}\label{eq:fillingosc}
	\filling(\lambda,\kappa,\nu,\mu) = \begin{cases}
	1&\text{if $\kappa+\nu-\lambda$ contains a negative entry,}\\
	0&\text{else,}
\end{cases}
\end{equation}
where the cells are labelled as depicted below:
\begin{equation}
\label{equation.labelling of cells}
	\begin{tikzpicture}[scale=0.8]
	\draw (0,0) rectangle (2,2);
	\node at (0,2)[anchor=south west]{$\lambda$};
	\node at (2,2)[anchor=south west]{$\nu$};
	\node at (0,0)[anchor=south west]{$\kappa$};
	\node at (2,0)[anchor=south west]{$\mu$};
	\node at (1,1) {\scalebox{0.8}{\footnotesize$\filling(\lambda,\kappa,\nu,\mu)$}};
	\end{tikzpicture}.
\end{equation}
\end{definition}

\begin{definition}
Denote by $\Mosc$ the function that maps an $r$-symplectic oscillating tableau of length $n$ to an $n\times n$ adjacency matrix
using Construction~\ref{construction:tableau_to_chord} and the filling rule \eqref{eq:fillingosc}.
\end{definition}

Next, we generalize the above construction for $r$-fans of Dyck paths and vacillating tableaux.

%%%%%%%%%%%%%%%%%%%%%%%%%%%%%%%%%%%%%%%%%%%%
\subsubsection{Chord diagrams for $r$-fans of Dyck paths}
Given an $r$-fan of Dyck paths $\fan = (\emptyset=\mu^0,\mu^1,\dots,\mu^{n}=\emptyset)$, we construct an adjacency matrix via
Construction~\ref{construction:tableau_to_chord} using the following filling rule:

\begin{definition}
The \defn{filling rule} for fans of Dyck paths is
\begin{equation}\label{eq:fillingfans}
	\filling(\lambda,\kappa,\nu,\mu) = \text{number of negative entries in $\kappa + \nu-\lambda$},
\end{equation}
where the cells are labelled as in~\eqref{equation.labelling of cells}.
\end{definition}

\begin{remark}
Note that for oscillating tableaux at most one negative entry can occur. Thus the filling rule~\eqref{eq:fillingfans} for fans of Dyck paths
is a natural generalization of the rule~\eqref{eq:fillingosc}.
\end{remark}

\begin{definition}
Denote by $\Mfan$ the function that maps an $r$-fan of Dyck paths of length $n$ to an $n\times n$ adjacency matrix
using Construction~\ref{construction:tableau_to_chord} and the filling rule \eqref{eq:fillingfans}.
\end{definition}

\newpage

\begin{example}
Consider the following fan corresponding to the sequence of vectors $\fan = (000, 111, 222, 311, 422, 331, 222, 111, 000)$.
\begin{enumerate}
\item We apply promotion a total of $n=8$ times, to obtain the full orbit.
\def\N{8}
\def\promotionorbit{{{"000", "111", "222", "311", "422", "331", "222", "111", "000"},
{"000", "111", "200", "311", "220", "111", "000", "111", "000"},
{"000", "111", "222", "311", "220", "111", "222", "111", "000"},
{"000", "111", "200", "111", "200", "311", "200", "111", "000"},
{"000", "111", "220", "311", "422", "311", "222", "111", "000"},
{"000", "111", "220", "331", "220", "311", "200", "111", "000"},
{"000", "111", "222", "111", "220", "111", "220", "111", "000"},
{"000", "111", "000", "111", "200", "311", "220", "111", "000"},
{"000", "111", "222", "311", "422", "331", "222", "111
", "000"}}}
 \begin{center}
\scalebox{0.8}{
\begin{tikzpicture}%[font=\sffamily]
  \foreach\k in {0,1,...,\N}
  {
  \foreach \l in {0,1,...,\N}
   \node at ($({(\l+\k)*0.75},{(\N-\k)*0.5})$){\pgfmathparse{\promotionorbit[\k][\l]}\pgfmathresult};
  }
\end{tikzpicture}.}\end{center}
\item We group the results into the promotion matrix and fill the cells of the square grid according to $\filling$. For better readability we omitted zeros.
\def\N{8}\def\Nminusone{7}
\def\promotiondiagram{{{"000", "111", "222", "311", "422", "331", "222", "111", "000"},
{"111", "000", "111", "200", "311", "220", "111", "000", "111"},
{"222", "111", "000", "111", "222", "311", "220", "111", "222"},
{"311", "200", "111", "000", "111", "200", "111", "200", "311"},
{"422", "311", "222", "111", "000", "111", "220", "311", "422"},
{"331", "220", "311", "200", "111", "000", "111", "220", "331"},
{"222", "111", "220", "111", "220", "111", "000", "111", "222"},
{"111", "000", "111", "200", "311", "220", "111", "000", "111"},
{"000", "111", "222", "311", "422", "331", "222", "111", "000"}}}
\def\promotiondiagramfilling{{
{"", "", "", "", "", "", "", "3"},
{"", "", "2", "", "", "", "1", ""},
{"", "2", "", "", "", "1", "", ""},
{"", "", "", "", "2", "", "1", ""},
{"", "", "", "2", "", "1", "", ""},
{"", "", "1", "", "1", "", "1", ""},
{"", "1", "", "1", "", "1", "", ""},
{"3", "", "", "", "", "", "", ""}}}
 \begin{center}
\scalebox{0.8}{\begin{tikzpicture}[scale=0.8,baseline={([yshift=-.8ex]current bounding box.center)}]%,baseline=(current bounding box.center)]%,font=\sffamily]
\draw (1,\N+0.5) {};
  \foreach\k in {0,1,...,\N}
  {
	\draw[dotted, gray] (0,\k) -- (\N,\k);
  	\draw[dotted, gray] (\k,0) -- (\k,\N);
  \foreach \l in {0,1,...,\N}
   \node at (\l-0.4,\N-\k+0.1)[anchor=west,scale=0.9]{\pgfmathparse{\promotiondiagram[\k][\l]}\pgfmathresult};
  }
    \foreach\k in {0,1,...,\Nminusone}
  {
  \foreach \l in {0,1,...,\Nminusone}
   \node at (\l+0.5,\N-\k-0.5){\bf\pgfmathparse{\promotiondiagramfilling[\k][\l]}\pgfmathresult};
  }
\end{tikzpicture}}
\end{center}
\item Regard the filling as the adjacency matrix of a graph, the chord diagram.
 \[\Mfan(\fan) = \left(\begin{array}{rrrrrrrr}
0 & 0 & 0 & 0 & 0 & 0 & 0 & 3 \\
0 & 0 & 2 & 0 & 0 & 0 & 1 & 0 \\
0 & 2 & 0 & 0 & 0 & 1 & 0 & 0 \\
0 & 0 & 0 & 0 & 2 & 0 & 1 & 0 \\
0 & 0 & 0 & 2 & 0 & 1 & 0 & 0 \\
0 & 0 & 1 & 0 & 1 & 0 & 1 & 0 \\
0 & 1 & 0 & 1 & 0 & 1 & 0 & 0 \\
3 & 0 & 0 & 0 & 0 & 0 & 0 & 0
\end{array}\right)\qquad \begin{tikzpicture}[line width=1pt, baseline={([yshift=-0.8ex]current bounding box.center)}]
    \node (a) [draw=none, minimum size=4cm, regular polygon, regular polygon sides=8] at (0,0) {};
    \foreach \n in {1,2,...,8}
        \path (a.center) -- (a.corner \n) node[pos=1.15] {$\n$};
    \draw(a.corner 1) -- node [midway,fill=white,inner sep=1pt] {3} (a.corner 8);
    \draw(a.corner 2) -- node [midway,fill=white,inner sep=1pt] {1} (a.corner 7);
    \draw(a.corner 2) -- node [midway,fill=white,inner sep=1pt] {2} (a.corner 3);
    \draw(a.corner 3) -- node [midway,fill=white,inner sep=1pt] {1} (a.corner 6);
    \draw(a.corner 4) -- node [midway,fill=white,inner sep=1pt] {1} (a.corner 7);
    \draw(a.corner 4) -- node [midway,fill=white,inner sep=1pt] {2} (a.corner 5);
    \draw(a.corner 5) -- node [midway,fill=white,inner sep=1pt] {1} (a.corner 6);
    \draw(a.corner 6) -- node [midway,fill=white,inner sep=1pt] {1} (a.corner 7);
  \end{tikzpicture}\]
\end{enumerate}
\end{example}

%%%%%%%%%%%%%%%%%%%%%%%%%%%%%%%%%%%%%%%%%%%
\subsubsection{Chord diagrams for vacillating tableaux}
Note that $\mathcal{B}_\square$ is not minuscule and thus Theorem~\ref{thm:local_commutor} is not directly applicable. Using
Definition~\ref{definition.Psi vector} we can embed $\mathcal{B}_\square$ in $\mathcal{C}_\square^{\otimes 2}$
which gives a map $\vactoosc$ from vacillating tableaux to oscillating tableaux of twice the length which commutes with the crystal commutor. That is
\begin{equation}\label{eq:vactooscpromotion}
	\vactoosc \circ \mathsf{pr}_{\mathcal{B}_\square} = \vactoosc \circ \sigma_{\mathcal{B}_\square^{\otimes n-1},\mathcal{B}_\square}
	= \sigma_{(\mathcal{C}_\square^{\otimes 2})^{\otimes n-1},\mathcal{C}_\square^{\otimes 2}} \circ \vactoosc.
\end{equation}
This follows directly from the properties of virtualization.

Let $\vac$ be a vacillating tableau of length $n$ and weight zero. Let $\osc=(\emptyset=\mu^0,\mu^1,\dots,\mu^{2n}=\emptyset)$
be the corresponding oscillating tableau using $\vactoosc$. Then we obtain the promotion of $\vac$ using the following schema
\begin{equation}\label{eq:schema_vac_pr}
\tikzset{ short/.style={ shorten >=7pt, shorten <=7pt } }
    \begin{tikzpicture}[baseline=(current bounding box.center), scale=1.4]
    \node at (0,0+0.13)[anchor=west] {$\mu^0$};
    \node at (1,0+0.13)[anchor=west] {$\mu^1$};
    \node at (2,0+0.13)[anchor=west] {$\mu^2$};
    \node at (3,0+0.13)[anchor=west] {$\mu^3$};
    \node at (5,0+0.13)[anchor=west] {$\mu^{2n-1}$};
    \node at (6,0+0.13)[anchor=west] {$\mu^{2n}$};

    \node at (2,-1+0.13)[anchor=west] {$\mu^1$};
    \node at (6,-1+0.13)[anchor=west] {$\hat\mu^{2n-1}$};

    \node at (2,-2+0.13)[anchor=west] {$\hat\mu^0$};
    \node at (3,-2+0.13)[anchor=west] {$\hat\mu^1$};
    \node at (5,-2+0.13)[anchor=west] {$\hat\mu^{2n-3}$};
    \node at (6,-2+0.13)[anchor=west] {$\hat\mu^{2n-2}$};
    \node at (7,-2+0.13)[anchor=west] {$\hat\mu^{2n-1}$};
	\node at (8,-2+0.13)[anchor=west] {$\hat\mu^{2n}$.};

    \draw (0,0) -- (3,0);
    \draw[dotted,short] (3,0) -- (5,0);
    \draw (5,0) -- (6,0);
    \draw (2,-1) -- (3,-1);
    \draw[dotted,short] (3,-1) -- (5,-1);
    \draw (5,-1) -- (6,-1);
	\draw (2,-2) -- (3,-2);
	\draw[dotted,short] (3,-2) -- (5,-2);
	\draw (5,-2) -- (8,-2);

    \draw (2,0) -- (2,-2);
    \draw (3,0) -- (3,-2);
    \draw (5,0) -- (5,-2);
    \draw (6,0) -- (6,-2);
    \end{tikzpicture}
\end{equation}

Following Construction~\ref{construction:tableau_to_chord}, we apply promotion a total of $n$ times and use the cut-and-glue procedure to obtain
a $2n\times 2n$ square. We fill the squares using the filling rule for oscillating tableaux as given by~\eqref{eq:fillingosc}.

To obtain an $n\times n$ adjacency matrix, we subdivide the $2n\times 2n$ matrix into $2\times 2$ blocks and take the sum of each block.

\begin{definition}
Denote by $\Mvactoosc$ the function that maps a vacillating tableau $\vac$ of weight zero of length $n$
to an $n\times n$ adjacency matrix using $\vactoosc$, Schema~\eqref{eq:schema_vac_pr}, Construction~\ref{construction:tableau_to_chord}, 
filling rule \eqref{eq:fillingosc},
and block sums.
\end{definition}

\begin{example}
\label{ex:vac}
Consider the vacillating tableau of length $9$
\[
	\vac = (000, 100, 200, 210, 211, 111, 111, 110, 100, 000).
\]
We first embed $\vac$ into an oscillating
tableau using the bijection $\Psi$ from $\mathcal{B}_\square$ to $\mathcal{V}$ given in Definition~\ref{definition.Psi vector}. Specifically, we use
$\Psi$ to establish a correspondence between the highest weight element in $\mathcal{B}_\square^{\otimes 9}$ associated to $\vac$
and a highest weight element in $(\mathcal{C}_\square^{\otimes 2})^{\otimes 9}$, from which we obtain $\vactoosc(\vac)$ as
\begin{multline*}
	\vactoosc(\vac) = (000, 100, 200, 300, 400, 410, 420, 421, 422, 322, 222, 221, \\
	222, 221, 220, 210, 200, 100, 000).
\end{multline*}
\begin{enumerate}
\item We apply promotion a total of $n=9$ times on the above schema ($2n = 18$ times on the oscillating tableau $\vactoosc(\vac)$),
to obtain the full orbit. Below the first iteration of promotion, we show all $9$ applications of promotion.
\end{enumerate}
\begin{center}
{\scriptsize
 \begin{tabular}{p{0.23cm} p{0.23cm} p{0.23cm} p{0.23cm} p{0.23cm} p{0.23cm} p{0.23cm} p{0.23cm} p{0.23cm} p{0.23cm} p{0.23cm} p{0.23cm} p{0.23cm} p{0.23cm} p{0.23cm} p{0.23cm} p{0.23cm} p{0.23cm} p{0.23cm} p{0.23cm} p{0.23cm} }
000 & 100 &  200 & 300 &  400 & 410 & 420 &  421 &  422 &  322 & 222 &  221 &  222 &  221 &  220 &  210 &  200 &  100 &  000 & & \\
& & 100  &  200 &  300 &  310 &  320 &  321 &  322 & 222 &  221&  220&  221&  220& 221 & 211&  210 &110&  100 \\
& & 000&100 & 200& 210& 220& 221 & 222& 221& 220& 221& 222& 221& 222 &221 &220& 210& 200& 100& 000
\end{tabular}
}
\end{center}
\[
\includegraphics[scale=1]{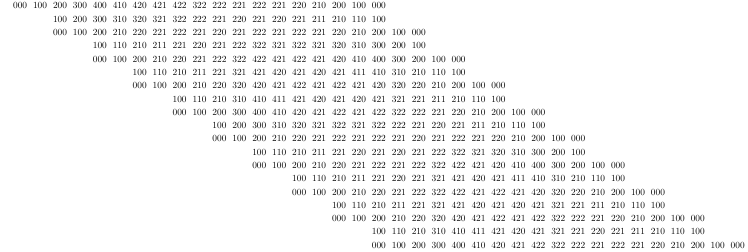}
\]
\begin{enumerate}[(2)]
\item We group the results into the promotion matrix and fill the cells of the square grid according to $\Phi$ in~\eqref{eq:fillingosc}.
For better readability, we subdivided the diagram into $2\times 2$ blocks and took the sum of the entries in each block, as well as omitted the zeros.
\[
\includegraphics[scale=1]{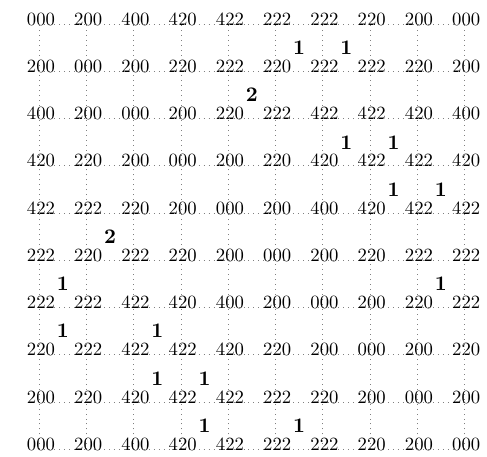}
\]
\item Regard the filling as the adjacency matrix of a graph, the chord diagram.
\[\Mvactoosc(\vac) = \left(\begin{array}{rrrrrrrrr}
0 & 0 & 0 & 0 & 0 & 1 & 1 & 0 & 0 \\
0 & 0 & 0 & 0 & 2 & 0 & 0 & 0 & 0 \\
0 & 0 & 0 & 0 & 0 & 0 & 1 & 1 & 0 \\
0 & 0 & 0 & 0 & 0 & 0 & 0 & 1 & 1 \\
0 & 2 & 0 & 0 & 0 & 0 & 0 & 0 & 0 \\
1 & 0 & 0 & 0 & 0 & 0 & 0 & 0 & 1 \\
1 & 0 & 1 & 0 & 0 & 0 & 0 & 0 & 0 \\
0 & 0 & 1 & 1 & 0 & 0 & 0 & 0 & 0 \\
0 & 0 & 0 & 1 & 0 & 1 & 0 & 0 & 0 \\
\end{array}\right)\qquad \begin{tikzpicture}[line width=1pt, baseline={([yshift=-0.8ex]current bounding box.center)}]
    \node (a) [draw=none, minimum size=4cm, regular polygon, regular polygon sides=9] at (0,0) {};
    \foreach \n in {1,2,...,9}
        \path (a.center) -- (a.corner \n) node[pos=1.15] {$\n$};
    \draw(a.corner 1) -- node [midway,fill=white,inner sep=1pt] {1} (a.corner 6);
    \draw(a.corner 1) -- node [midway,fill=white,inner sep=1pt] {1} (a.corner 7);
    \draw(a.corner 2) -- node [midway,fill=white,inner sep=1pt] {2} (a.corner 5);
    \draw(a.corner 3) -- node [midway,fill=white,inner sep=1pt] {1} (a.corner 7);
    \draw(a.corner 3) -- node [midway,fill=white,inner sep=1pt] {1} (a.corner 8);
    \draw(a.corner 4) -- node [midway,fill=white,inner sep=1pt] {1} (a.corner 8);
    \draw(a.corner 4) -- node [midway,fill=white,inner sep=1pt] {1} (a.corner 9);
    \draw(a.corner 6) -- node [midway,fill=white,inner sep=1pt] {1} (a.corner 9);
  \end{tikzpicture}\]
\end{enumerate}
\end{example}

Alternatively, we may obtain an adjacency matrix by embedding $\mathcal{B}_{\square}$ as a connected component of
$\mathcal{B}_{\mathsf{spin}}^{\otimes 2}$ (see Section~\ref{section.vacillating}). As discussed in Definition~\ref{definition.vactofan}, this embedding
gives rise to the map $\vactofan$ from vascillating tableaux to $r$-fans of Dyck paths of twice the length. From the $r$-fans of Dyck paths, we
apply $\Mfan$ to obtain a $2n \times 2n$ matrix. Subdividing this matrix into $2\times 2$ blocks and taking block sums produces an $n \times n$
adjacency matrix for vascillating tableaux.

\begin{definition}
Denote by  $\Mvactofan$ the function that maps a vascillating tableau $\vac$ of weight zero and length $n$ to an $n \times n$ adjacency
matrix using $\vactofan$, Construction~\ref{construction:tableau_to_chord}, filling rule \eqref{eq:fillingfans}, and block sums.
\end{definition}

%%%%%%%%%%%%%%%%%%%%%%%%%%%%%%%%%%%%%%%%%%%
\subsubsection{Promotion and rotation}

For the various maps $\mathsf{M}_X$ with $X\in \{O,F,V\to O, V \to F\}$ constructed in this section, we obtain the following main result.

\begin{proposition}
\label{proposition:rotation_promotion}
The map $\mathsf{M}_X$ for $X\in \{O,F,V\to O, V \to F\}$ intertwines promotion and rotation, that is
\[
	\mathsf{M}_X \circ \mathsf{pr} = \rot \circ \mathsf{M}_X.
\]
\end{proposition}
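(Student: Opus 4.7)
The plan is to show that one application of promotion on the input tableau corresponds to a simultaneous cyclic shift of the top row to the bottom and the leftmost column to the right in the associated promotion matrix. By Proposition~\ref{proposition:rotation_via_toroidal}, this toroidal shift is exactly the rotation of the resulting chord diagram, which gives the identity $\mathsf{M}_X(\mathsf{pr}(T)) = \rot \mathsf{M}_X(T)$.

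First I would treat the cases $X=O$ and $X=F$ uniformly, since Construction~\ref{construction:tableau_to_chord} is identical in both and only the filling rule $\filling$ differs. In Step~1, the $(k+1)$st row of the full parallelogram built from $T$ records the weights of $\mathsf{pr}^k(T)$, computed via Lenart's local rules. Starting the construction with $\mathsf{pr}(T)$ shifts the row index cyclically by one, and because $\mathsf{pr}^n=\mathsf{id}$, the new bottom row coincides with the original top row. Moreover, since each row of the parallelogram is horizontally offset by one position from the previous, relabelling the rows simultaneously shifts the columns. After the cut-and-paste in Step~2, both shifts descend to cyclic shifts within the $n\times n$ square: the promotion matrix of $\mathsf{pr}(T)$ is obtained from that of $T$ by moving the top row to the bottom and the leftmost column to the right. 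Since the filling rules in~\eqref{eq:fillingosc} and~\eqref{eq:fillingfans} depend only on the four corner weights of each cell, the filled matrix undergoes the same toroidal shift, and Proposition~\ref{proposition:rotation_via_toroidal} concludes the argument.

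For $X=V\to O$ and $X=V\to F$, I would reduce to the oscillating case (resp.\ to the fan case) via the embeddings $\vactoosc$ and $\vactofan$. Because these come from virtual crystal morphisms, they intertwine the crystal commutor and hence promotion: a single promotion step on a vacillating tableau of length $n$ corresponds to two promotion steps of the associated oscillating tableau or $r$-fan of length $2n$. At the level of the $2n\times 2n$ promotion matrix, this is a toroidal shift by two rows and two columns. Taking $2\times 2$ block sums, this doubled shift collapses to a single toroidal shift of the $n\times n$ block-summed matrix, which is again the rotation of the resulting chord diagram.

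The main obstacle will be the careful bookkeeping of the cut-and-paste procedure in Step~2: one has to check that the weights along the diagonal seam agree, that the joint row-column shift on the parallelogram descends cleanly to a single toroidal shift of the $n\times n$ square with no off-by-one errors, and that Lenart's local rule is preserved across the seam (which follows from the periodicity of the iterated promotion scheme). Once this bookkeeping is done, the block-sum case requires only the observation that subdivision into $2\times 2$ blocks is compatible with toroidal shifts of even step size, and the proposition follows in all four cases simultaneously.
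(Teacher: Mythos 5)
Your argument for $X=O$ and $X=F$ is essentially the paper's: both proofs come down to the observation that the $(i,j)$ entry of the filled square depends only on four corner weights of the form ``$(j-i)$-th entry of $\mathsf{pr}^i(\tab)$'', so replacing $\tab$ by $\mathsf{pr}(\tab)$ shifts all indices by one and the filled matrix undergoes exactly the toroidal shift of Proposition~\ref{proposition:rotation_via_toroidal}. The bookkeeping across the seam that you worry about is handled in the paper simply by working modulo $n$ with the periodic array $\mu^{i,j}$ rather than with the literal parallelogram.

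For the vacillating cases, however, you take a different route (reduction to the length-$2n$ objects via $\vactoosc$ and $\vactofan$), and as stated it has two genuine gaps. First, your key claim that one promotion of $\vac$ corresponds to \emph{two single-step promotions} of $\vactoosc(\vac)$ (resp.\ $\vactofan(\vac)$) is not what virtualization gives you for free: the commutor argument yields $\sigma_{(\mathcal{C}_\square^{\otimes 2})^{\otimes n-1},\,\mathcal{C}_\square^{\otimes 2}}$, i.e.\ promotion with respect to the doubled alphabet, and identifying this with $\mathsf{pr}^2_{\cboxcrystal}$ requires a separate boundary analysis (this is exactly the content of the paper's Lemma~\ref{lemma:fan_osc_promotion_equivalence} in the fan-to-oscillating setting, which takes real work). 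Second, your reduction tacitly assumes $\mathsf{M}_{V\to X}=\mathsf{blocksum}_{2}\circ \mathsf{M}_{X}\circ \iota_{V\to X}$, but $\Mvactoosc$ and $\Mvactofan$ are defined via the two-row scheme~\eqref{eq:scheme_vac_pr}, not by block-summing the full $2n\times 2n$ matrix; the paper's Lemma~\ref{lem:blocksumVac} shows these differ by $2(r-1)E$ on the diagonal when $X=F$. Both gaps are repairable (the diagonal correction is invariant under $\rot$, and the promotion identity can be proved as in Lemma~\ref{lemma:fan_osc_promotion_equivalence}), but the paper avoids them entirely by noting that even in the vacillating constructions the $(i,j)$ entry of the final $n\times n$ matrix is a function $\widetilde\filling$ of four partitions taken directly from the promotion orbit of $\vac$ itself --- the intermediate odd-indexed partitions and the middle row of the scheme are determined by those four corners --- so the same one-line index shift covers all four cases uniformly.
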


\begin{proof}
Let $\tab$ be either a fan of Dyck paths, an oscillating tableau of weight zero or a vacillating tableau of weight zero of length n and denote by
$\widehat \tab$ its promotion.

For $0 \leqslant i,j < n$ let $\mu^{i,j}$ be the $(j-i)$-th entry of $\mathsf{pr}^{i}(\tab)$, where indexing starts with zero and is understood modulo $n$. 
For $1\leqslant i,j \leqslant n$ denote by $m_{i,j}$ the entry in the $i$-th row and $j$-th column of $\mathsf{M}_X(\tab)$.
Similarly, denote by $\widehat\mu^{i,j}$ the $(j-i)$-th entry of $\mathsf{pr}^{i}(\widehat\tab)$ and by $\widehat m_{i,j}$ the $i$-th row and 
$j$-th column of $\mathsf{M}_X(\widehat\tab)$.

In all of our constructions $m_{i,j}$ depends on the four partitions $\mu^{i-1,j-1}$, $\mu^{i,j-1}$, $\mu^{i-1,j}$ and $\mu^{i,j}$ via some function 
$m_{i,j}=\widetilde\filling(\mu^{i-1,j-1},\mu^{i,j-1},\mu^{i-1,j},\mu^{i,j})$. 
Analogously we have $\widehat m_{i,j}= \widetilde\filling(\widehat\mu^{i-1,j-1},\widehat\mu^{i,j-1},\widehat\mu^{i-1,j},\widehat\mu^{i,j})$.

A simple calculation gives
\begin{align*}
\widehat m_{i,j}&= \widetilde\filling(\widehat\mu^{i-1,j-1},\widehat\mu^{i,j-1},\widehat\mu^{i-1,j},\widehat\mu^{i,j})\\
&= \widetilde\filling(\mu^{i,j},\mu^{i+1,j},\mu^{i,j+1},\mu^{i+1,j+1}) = m_{i+1,j+1},
\end{align*}
where indices are understood modulo $n$.
Thus, $\mathsf{M}_X(\widehat \tab) = \rot(\mathsf{M}_X(\tab))$.
\end{proof}

Note that the promotion matrix $\mathsf{M}_X(\tab)$ is sometimes referred to as the \defn{promotion-evacuation diagram} of $\tab$ as it also encodes 
information about the evacuation of $\tab$. Following~\cite{PfannererRubeyWestbury.2020}, a generalization of Sch\"utzenberger's evacuation operator 
can be defined on crystals as follows.

\begin{definition}
Let $C$ be a crystal and $u \in C^{\otimes n}$ a highest weight element. Then \defn{evacuation} $\mathsf{evac}$ on $u$ is defined as 
\[
	(1_{C^{\otimes n-2}} \otimes \mathsf{pr}) \circ \cdots \circ (1_{C} \otimes \mathsf{pr}) \circ \mathsf{pr}(u),
\]
where $(1_{C^{\otimes n-m}} \otimes \mathsf{pr})(w_{n} \otimes \cdots \otimes w_{2} \otimes w_{1}) = w_{n} \otimes \cdots 
\otimes w_{m+1} \otimes \mathsf{pr}(w_{m} \otimes \cdots \otimes w_{1})$.
\end{definition}

Given a tableau $\tab$ corresponding to a highest weight element $u$, we denote by $\mathsf{evac}(\tab)$ the tableau associated to the highest 
weight element $\mathsf{evac}(u)$.

\begin{proposition}
The map $\mathsf{M}_X$ for $X\in \{O,F,V\to O, V \to F\}$ intertwines evacuation and the anti-transpose, that is
\[
	\mathsf{M}_X \circ \mathsf{evac} = \antr \circ \mathsf{M}_X,
\]
where the anti-transpose $\antr$ of a matrix is its transpose over its anti-diagonal.
\end{proposition}

\begin{proof}
Let $\tab$ be either a fan of Dyck paths, an oscillating tableau of weight zero, or a vacillating tableau of weight zero of length $n$. From the definition 
of $\mathsf{evac}$ and the construction of $\mathsf{M}_X$, we have that $\mathsf{evac}(\tab)$ is precisely the tableau obtained by reading the right border 
of $\mathsf{M}_X$ from bottom to top. Note that in order to prove the statement for $\Mvactoosc$ it suffices to show it for $\Mosc$ as $\Psi$ 
intertwines $\sigma_{\bboxcrystal^{\otimes m}, \bboxcrystal}$ and $\sigma_{(\cboxcrystal^{\otimes 2})^{\otimes m}, \cboxcrystal^{\otimes 2}}$  for all 
$m \geqslant 1$ by Equation~\eqref{equation.virtual sigma}, where $\Psi$ is the virtualization map given in Definition ~\ref{definition.Psi vector}. 
Similarly, in order to prove the statement for $\Mvactofan$ it suffices to prove it for $\Mfan$.

Consider partitions $\lambda, \kappa, \nu, \mu$ labelling the corner of a cell in $\mathsf{M}_{X}$ as in~\eqref{equation.labelling of cells}, where $X \in \{O, F \}$. 
By~\cite[Lemma 4.1.2]{vanLeeuwen.1998}, we have $\mu = \dom_{W}(\kappa+\nu-\lambda)$ if and only if $\lambda = \dom_{W}(\kappa+\nu-\mu)$ as 
$\spincrystal$ and $\cboxcrystal$ are minuscule. 
This implies that partitions labelling the corners of every cell in $\mathsf{M}_X \circ \mathsf{evac}$ and $\antr \circ \mathsf{M}_X$ are equal. 

To complete the proof we show that filling rules $\filling(\lambda, \kappa, \nu, \mu)$ given in ~\eqref{eq:fillingosc} and ~\eqref{eq:fillingfans} satisfy 
$\filling(\lambda, \kappa, \nu, \mu) = \filling(\mu, \kappa, \nu, \lambda)$. As partitions connected by a vertical or horizontal edge in $\Mosc$ differ by 
exactly one box, we have that $\filling(\lambda, \kappa, \nu, \mu) = 1$ if and only if $\lambda = \mu = (\lambda_{1}, \ldots, \lambda_{i}, 0, \ldots, 0)$, 
$\lambda_{i} = 1$ for some $i$, and $\kappa = \nu = (\lambda_{1}, \ldots, \lambda_{i-1}, 0, 0, \ldots, 0)$. Thus, the filling rule for oscillating tableaux 
satisfies $\filling(\lambda, \kappa, \nu, \mu) = \filling(\mu, \kappa, \nu, \lambda)$. By a similar argument the filling rule for fans of Dyck paths also 
satisfies the desired symmetry.
\end{proof}

%%%%%%%%%%%%%%%%%%%%%%%%%%%%%%%%%%%%%%%%%%%%%%%%%%%%%%%%%%%%%%%%%
\subsection{Fomin growth diagrams}
\label{section.fomin growth}
Generally speaking, a \defn{Fomin growth diagram} is a means to bijectively map sequences of partitions satisfying certain constraints to fillings of a
Ferrers shape with non-negative integers~\cite{Fomin.1986,Roby.1991,Leeuwen.2005,Krattenthaler.2006}. In this setting, we draw the Ferrers shape in French 
notation (to fix how the growth diagrams are arranged).

To map a filling of a Ferrers shape to a sequence of partitions we iteratively label all corners of cells of the shape with partitions by certain local rules.
Given a cell, where already all three partitions on the left and bottom corners are known, the forward rules determine the fourth partition on the top
right corner based on the filling of the cell. Conversely, given the three partitions on the top and right corners of a cell, the backwards rules determine
the last partition and the filling of the cell. When defining the local rules we label the cells as seen in Figure~\ref{fig:growth-labels}.

\begin{figure}
\[\begin{tikzpicture}[scale=0.8]
\draw (0,0) rectangle (2,2);
\node at (0,2)[anchor=south west]{$\alpha$};
\node at (2,2)[anchor=south west]{$\beta$};
\node at (0,0)[anchor=south west]{$\gamma$};
\node at (2,0)[anchor=south west]{$\delta$};
\node at (1,1) {$m$};
\end{tikzpicture}\]
\caption{A cell of a growth diagram filled with a non-negative integer $m$}
\label{fig:growth-labels}
\end{figure}
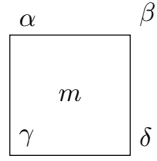
For partitions $\delta$ and  $\alpha$, we define their union $\delta \cup \alpha$ to be the partition containing $\delta_i + \alpha_i$ cells in row $i$,
where $\delta_i$ and $\alpha_i$ denote the number of cells in row $i$ of $\delta$ and $\alpha$ respectively. Recall that we pad partitions with 0's if
necessary. We denote $\delta \cup \delta$ by $2\delta$.
We define the intersection of two partitions $\delta \cap \alpha$ to be the partition containing $\min\{\delta_i, \alpha_i\}$ cells in row $i$.

We begin by describing the local rules for a filling of a Ferrers shape with at most one $1$ in each row and in each column and $0$'s
everywhere else (omitted for readability). Moreover, we require that any two adjacent partitions in the labelling of our growth diagram
(for example, $\gamma \rightarrow \alpha$ and $\gamma \rightarrow \delta$ in Figure~\ref{fig:growth-labels}) must either coincide or the one
at the head of the arrow is obtained from the other by adding a unit vector. We record the local forward rules and local backward rules for this case
of $0/1$ filling, which are stated explicitly in~\cite[p. 4-5]{Krattenthaler.2006}.

Given a $0/1$ filling of a Ferrers shape and partitions labelling the bottom and left side of the Ferrers shape, we apply the following
\defn{local forward rules} to complete the labelling.
\begin{itemize}
\item[(F1)] If $\gamma = \delta = \alpha$, and there is no $1$ in the cell, then $\beta = \gamma$.
 \item[(F2)] If $\gamma = \delta \neq \alpha$, then $\beta = \alpha$.
 \item[(F3)] If $\gamma = \alpha \neq \delta$, then $\beta = \delta$.
 \item[(F4)] If $\gamma, \delta, \alpha$ are pairwise different, then $\beta = \delta \cup \alpha$.
 \item[(F5)] If $\gamma \neq \delta = \alpha$, then $\beta$ is formed by adding a square to the $(k+1)$-st row of $\delta = \alpha$, given that
 $\delta = \alpha$ and $\gamma$ differ in the $k$-th row.
 \item[(F6)] If $\gamma = \delta = \alpha$, and if there is a $1$ in the cell, then $\beta$ is formed by adding a square to the first row of
 $\gamma = \delta = \alpha$.
\end{itemize}

Given a Ferrers shape and partitions labelling the top and right side, we apply the following \defn{local backward rules} to complete the labelling and
recover the filling.
\begin{itemize}
\item[(B1)] If $\beta = \delta = \alpha$, then $\gamma = \beta$.
 \item[(B2)] If $\beta = \delta \neq \alpha$, then $\gamma= \alpha$.
 \item[(B3)] If $\beta = \alpha \neq \delta$, then $\gamma = \delta$.
 \item[(B4)] If $\beta, \delta, \alpha$ are pairwise different, then $\gamma = \delta \cap \alpha$.
 \item[(B5)] If $\beta \neq \delta = \alpha$, then $\gamma$ is formed by deleting a square from
 the $(k-1)$-st row of $\delta = \alpha$, given that
 $\delta = \alpha$ and $\beta$ differ in the $k$-th row with $k \geqslant 2$.
 \item[(B6)] If $\beta \neq \delta = \alpha$, and if $\beta$ and $\delta = \alpha$ differ in the first row, then $\gamma = \delta = \alpha$ and the cell
 is filled with a $1$.
\end{itemize}

\begin{construction}[\cite{PfannererRubeyWestbury.2020}]
\label{construction.growthdiagram}
  Let $\osc=(\emptyset=\mu^0,\mu^1,\dots,\mu^n=\emptyset)$ be an oscillating  tableau.  The associated triangular growth diagram is the
  Ferrers shape $(n-1, n-2, \ldots, 2, 1, 0)$. Label the cells according to the following specification:
  \begin{enumerate}
  \item Label the north-east corners of the cells on the main diagonal from the top-left to the bottom-right with the partitions in $\osc$.
  \item For each $i\in \{0,\dots,n-1\}$ label the corner on the first subdiagonal adjacent to the labels $\mu^{i}$ and $\mu^{i+1}$ with the partition
          $\mu^{i} \cap \mu^{i+1}$.
  \item Use the backwards rules B1-B6 to obtain all other labels and the fillings of the cells.
  \end{enumerate}
We denote by $\Gosc(\osc)$ the symmetric $n\times n$ matrix one obtains from the filling of the growth diagram by putting zeros
in the unfilled cells and along the diagonal and completing this to a symmetric matrix.

Starting from a filling of a growth diagram one obtains the oscillating tableau by setting all vectors on corners on the bottom and left
border of the diagram to be the empty partition and applying the forwards growth rules F1-F6.
\end{construction}

Next, we will extend these local rules to any filling of a Ferrers shape with non-negative integers.

%%%%%%%%%%%%%%%%%%%%%%%%%%%%%%%%%%%%%%%%%%%%%%%%%%%%%%%%%%%%%%%%%%%%%%%%%
\subsection{Fomin growth diagrams: Rule Burge}
\label{section.Fomin Burge}

Given a filling of a Ferrers shape $(\lambda_1, \ldots, \lambda_\ell)$ with non-negative integers, we produce a ``blow up" construction of the original shape
for the Burge variant which contains south-east chains of $1$'s, as done by~\cite{Krattenthaler.2006}. We begin by separating entries.
If a cell is filled with a positive entry $m$, we replace the cell with an $m\times m$ grid of cells with $1$'s along the diagonal (from top-left to bottom-right).
If there exist several nonzero entries in one column, we arrange the grids of cells also from top-left to bottom-right, so that the $1$'s form a south-east
chain in each column. We make the same arrangements for the rows, also establishing a south-east chain in each row. The resulting blow up
Ferrers diagram then contains $c_j$ columns in the original $j$-th column, where $c_j$ is equal to the sum of the entries in column $j$ or $1$ if the
$j$-th column contains only $0$'s, and $r_i$ rows in the original $i$-th row, where $r_i$ is equal to the sum of the entries in row $i$ or $1$ if the
$i$-th row contains only $0$'s. See Figure~\ref{fig:blow-up-example}.

Since the filling of the blow up growth diagram consists of $1$'s and $0$'s, we now apply the forward local rules. To start, we label all of the
corners of the cells on the left side and the bottom side of the blow up growth diagram by $\emptyset$. Then we apply the forward local rules to
determine the partition labels of the other corners, using the $0/1$ filling and partitions defined in previous iterations of the forward local rule.
Finally, we ``shrink back" the labelled blow up growth diagram to obtain a labelling of the original Ferrers diagram by only considering the partitions labelling
positions $\{(c_1 + \cdots + c_j,  r_{i}+  \cdots + r_{\ell}) \mid \, 1 \leqslant i \leqslant \ell, 1 \leqslant j \leqslant \lambda_{\ell-i+1}\}$. These positions are
precisely the intersections of the bolded black lines in Figure~\ref{fig:blow-up-example}.
To shrink back, we ignore the labels on intersections involving any blue lines in the blow up growth diagram and assign the partition labelling
 $(c_1 + \cdots + c_j,  r_{i}+  \cdots + r_{\ell})$ to the position $(j, \ell-i+1)$ in the original Ferrers diagram.
The resulting labelling has the property that
partitions on adjacent corners differ by a vertical strip \cite[Theorem 11]{Krattenthaler.2006}.

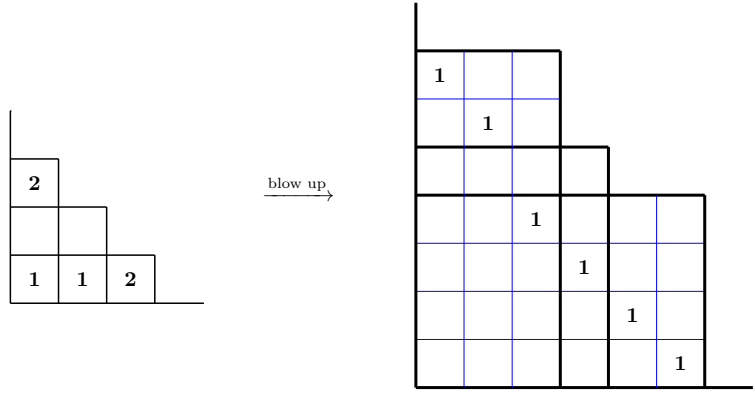
\begin{figure}
\def\N{4}\def\Nminusone{3}
\def\promotiondiagramfilling{{ %done on a torus
{"", "", "",""}, %column 1%
{"2", "", "",""},
{"", "", "",""},
{"1", "1", "2",""}}}
 \begin{center}
\scalebox{0.8}{$\begin{tikzpicture}[scale=.8,baseline={([yshift=-.6ex]current bounding box.center)}]%,baseline=(current bounding box.center)]%,font=\sffamily]
\draw (1,\N+0.5) {};
\foreach\k in {0,1,...,\N}
  {
	\draw[line width=0.25mm, solid, black] (0,\k) -- (\N-\k,\k);
  	\draw[line width=0.25mm, solid, black] (\N-\k,\k) -- (\N-\k,0);
  }
   \foreach\k in {0,...,\Nminusone}
  {
  \foreach \l in {0,...,\k}
   \node at (\l+0.5,\N-\k-0.5){\bf\pgfmathparse{\promotiondiagramfilling[\k][\l]}\pgfmathresult};
  }
\end{tikzpicture}
\hspace{25pt} \xrightarrow{\text{blow up}} \hspace{35pt}
\begin{tikzpicture}[scale=.8,baseline={([yshift=-.6ex]current bounding box.center)}]%,baseline=(current bounding box.center)]%,font=\sffamily]
\draw (1,\N+0.5) {};
%custom rows from original
	\draw[line width=0.5mm, solid, black] (0,0) -- (7,0);
	\draw[line width=0.5mm, solid, black] (0,4) -- (6,4);
	\draw[line width=0.5mm, solid, black] (0,5) -- (4,5);
	\draw[line width=0.5mm, solid, black] (0,7) -- (3,7);
%custom rows added in blow up, should have consecutive blue horizontal lines adding up to sum of entries in that row
  \foreach\k in {1,2,3}
  {
	\draw[solid, blue] (0,\k) -- (6,\k);
  }
  \foreach\k in {6}
  {
	\draw[solid, blue] (0,\k) -- (3,\k);
  }
%custom columns from original, should have 0 and max = total column entry sum, number of black lines = number of original columns + 0
	\draw[line width=0.5mm, solid, black] (0,0) -- (0,8);
	\draw[line width=0.5mm, solid, black] (3,0) -- (3,7);
	\draw[line width=0.5mm, solid, black] (4,0) -- (4,5);
	\draw[line width=0.5mm, solid, black] (6,0) -- (6,4);
%custom columns added in blow up, should have consecutive blue vertical lines adding up to sum of entries in that column
\foreach\k in {1,2}
  {
  	\draw[solid, blue] (\k,0) -- (\k,7);
  }
  \foreach\k in {5}
  {
  	\draw[solid, blue] (\k,0) -- (\k,4);
  }
  \node at (0+0.5, 7-0.5){\bf1};
  \node at (1+0.5, 6-0.5){\bf1};
  \node at (2+0.5, 4-0.5){\bf1};
  \node at (3+0.5, 3-0.5){\bf1};
  \node at (4+0.5, 2-0.5){\bf1};
  \node at (5+0.5, 1-0.5){\bf1};

\foreach\k in {1,...,4}
	\node at (0+0.5, \k-0.5){};
\foreach\k in {1,...,4}
	\node at (1+0.5, \k-0.5){};
\foreach\k in {1,...,3}
	\node at (2+0.5, \k-0.5){};
\foreach\k in {1,...,2}
	\node at (3+0.5, \k-0.5){};

\node at (4.5, 0.5){};

\foreach \k in {0, 1, 2, 3}
	\node at (\k+0.5, 5-0.5){};

\node at (0+0.5, 6-0.5){};
\node at (1+0.5, 7-0.5){};
\node at (2+0.5, 7-0.5){};
\node at (2+0.5, 6-0.5){};
\node at (3+0.5, 4-0.5){};
\node at (4+0.5, 4-0.5){};
\node at (5+0.5, 4-0.5){};
\node at (4+0.5, 3-0.5){};
\node at (5+0.5, 3-0.5){};
\node at (5+0.5, 2-0.5){};

\end{tikzpicture}
$}
\end{center}
\caption{An example of the blow up construction for Burge rules replacing positive integer entries with south-east chains of $1$'s in each column and row.}
\label{fig:blow-up-example}
\end{figure}

We now describe the direct Burge forward and backwards rules~\cite[Section 4.4]{Krattenthaler.2006}. Consider a cell filled by a non-negative integer
$m$, and labelled by the partitions $\gamma, \delta, \alpha$, where $\gamma\subset \delta$ and $\gamma\subset \alpha$, $\alpha/\gamma$ and
$\delta/\gamma$ are vertical strips. Moreover, denote by $\mathbbm{1}_{A}$ the truth function
\[
	\mathbbm{1}_{A} = \begin{cases} 1&\text{if $A$ is true,}\\
	0& \text{otherwise}.
	\end{cases}
\]
Then $\beta$ is determined by the following procedure:
\begin{description}
\item[Burge F0] Set $\CARRY:=m$ and $i:=1$.
\item[Burge F1] Set $\beta_i:= \max\{\delta_i,\alpha_i\}+\min\{\mathbbm{1}_{\gamma_i=\delta_i=\alpha_i},\CARRY\}$
\item[Burge F2] If $\beta_i = 0$, then stop and return $\beta=(\beta_1,\beta_2,\dots,\beta_{i-1})$. If not, then set
$\CARRY:=\CARRY-\min\{\mathbbm{1}_{\gamma_i=\delta_i=\alpha_i},\CARRY\}+\min\{\delta_i,\alpha_i\}-\gamma_i$ and $i:=i+1$ and go to F1.
\end{description}

Note that this algorithm is reversible. Given $\beta,\delta,\alpha$ such that $\beta/\delta$ and $\beta/\alpha$ are vertical strips, the backwards
algorithm is defined by the following rules:
\begin{description}
\item[Burge B0] Set $i:=\max\{j \mid \text{$\beta_j$ is positive}\}$ and $\CARRY:=0$.
\item[Burge B1] Set $\gamma_i:=\min\{\delta_i,\alpha_i\}-\min\{\mathbbm{1}_{\gamma_i=\alpha_i=\beta_i},\CARRY\}$.
\item[Burge B2] Set $\CARRY:=\CARRY-\min\{\mathbbm{1}_{\beta_i=\delta_i=\alpha_i},\CARRY\}+\beta_i-\max\{\delta_i,\alpha_i\}$ and $i:=i-1$. If $i=0$,
then stop and return $\gamma=(\gamma_1,\gamma_2,\dots)$ and $m=\CARRY$. If not, got to B1.
\end{description}

\begin{construction}
\label{construction.Burgegrowthrule}
Let $\fan = (\emptyset =\mu^0,\mu^1,\dots,\mu^{n}=\emptyset)$ be an $r$-fan of Dyck paths. The associated triangular growth diagram is
the Ferrers shape $(n-1, n-2, \ldots, 2, 1, 0)$. Label the cells according to the following specification:
\begin{enumerate}
\item Label the north-east corners of the cells on the main diagonal from the top-left to the bottom-right with the partitions in $\fan$.
\item For each $i\in \{0,\dots,n-1\}$ label the corner on the first subdiagonal adjacent to the labels $\mu^{i}$ and $\mu^{i+1}$ with the partition
$\mu^{i} \cap \mu^{i+1}$.
\item Use the backwards rules Burge B0, B1 and B2 to obtain all other labels and the fillings of the cells.
\end{enumerate}

We denote by $\Gfan(\fan)$ the symmetric $n\times n$ matrix one obtains from the filling of the growth diagram by putting zeros in the
unfilled cells and along the diagonal and completing this to a symmetric matrix.

Starting from a filling of a growth diagram one obtains the $r$-fan by filling the cells of a growth diagram, setting all vectors on
corners on the bottom and left border of the diagram to be the empty partition and applying the forwards growth rules Burge F0-F2.
\end{construction}

An example is given in Figure~\ref{fig:fan-example}.

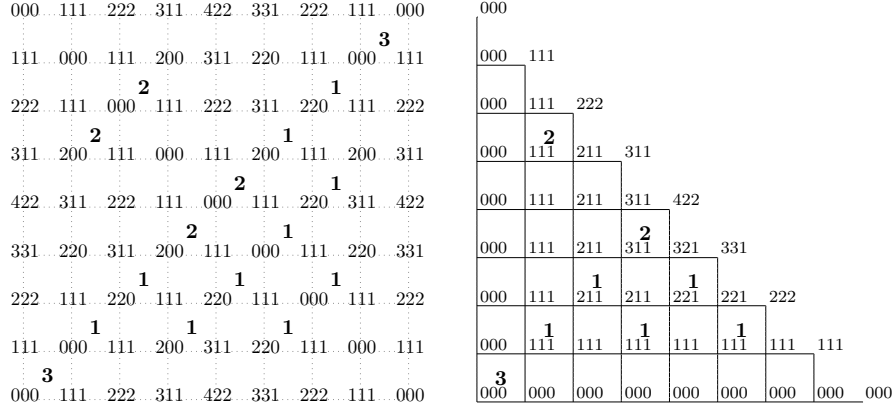
\begin{figure}
\def\N{8}\def\Nminusone{7}
\def\promotiondiagram{{{"000", "111", "222", "311", "422", "331", "222", "111", "000"},
{"111", "000", "111", "200", "311", "220", "111", "000", "111"},
{"222", "111", "000", "111", "222", "311", "220", "111", "222"},
{"311", "200", "111", "000", "111", "200", "111", "200", "311"},
{"422", "311", "222", "111", "000", "111", "220", "311", "422"},
{"331", "220", "311", "200", "111", "000", "111", "220", "331"},
{"222", "111", "220", "111", "220", "111", "000", "111", "222"},
{"111", "000", "111", "200", "311", "220", "111", "000", "111"},
{"000", "111", "222", "311", "422", "331", "222", "111", "000"}}}
\def\promotiondiagramfilling{{
{"", "", "", "", "", "", "", "3"},
{"", "", "2", "", "", "", "1", ""},
{"", "2", "", "", "", "1", "", ""},
{"", "", "", "", "2", "", "1", ""},
{"", "", "", "2", "", "1", "", ""},
{"", "", "1", "", "1", "", "1", ""},
{"", "1", "", "1", "", "1", "", ""},
{"3", "", "", "", "", "", "", ""}}}
\def\growthlabels{{{"000"}, {"000", "000"}, {"000", "111", "000"}, {"000", "111", "111", "000"}, {"000", "111", "211", "111", "000"}, {"000", "111", "211", "211", "111", "000"}, {"000", "111", "211", "311", "221", "111", "000"}, {"000", "111", "211", "311", "321", "221", "111", "000"}, {"000", "111", "222", "311", "422", "331", "222", "111", "000"}}}
 \begin{center}
\scalebox{0.8}{$\begin{tikzpicture}[scale=0.8,baseline={([yshift=-.8ex]current bounding box.center)}]%,baseline=(current bounding box.center)]%,font=\sffamily]
\draw (1,\N+0.5) {};
  \foreach\k in {0,1,...,\N}
  {
	\draw[dotted, gray] (0,\k) -- (\N,\k);
  	\draw[dotted, gray] (\k,0) -- (\k,\N);
  \foreach \l in {0,1,...,\N}
   \node at (\l-0.4,\N-\k+0.1)[anchor=west,scale=0.9]{\pgfmathparse{\promotiondiagram[\k][\l]}\pgfmathresult};
  }
    \foreach\k in {0,1,...,\Nminusone}
  {
  \foreach \l in {0,1,...,\Nminusone}
   \node at (\l+0.5,\N-\k-0.5){\bf\pgfmathparse{\promotiondiagramfilling[\k][\l]}\pgfmathresult};
  }
\end{tikzpicture}\qquad
\begin{tikzpicture}[scale=0.8,baseline={([yshift=-.8ex]current bounding box.center)}]%,baseline=(current bounding box.center)]%,font=\sffamily]
\draw (1,\N+0.5) {};

\foreach\k in {0,1,...,\N}
  {
	\draw[solid, black] (0,\k) -- (\N-\k,\k);
  	\draw[solid, black] (\N-\k,\k) -- (\N-\k,0);
  	\draw[dotted, gray] (\N-\k,\k) -- (\N-\k,0); %
   %\node at (\k-0.4,\N-\k+0.1)[anchor=west,scale=0.9]{\pgfmathparse{\growthlabels[0][\k]}\pgfmathresult};
\foreach \l in {0,...,\k}
   \node at (\l+0.35,\k-\l+0.2){\footnotesize\pgfmathparse{\growthlabels[\k][\l]}\pgfmathresult};
  }

   \foreach\k in {0,...,\Nminusone}
  {
  \foreach \l in {0,...,\k}
   \node at (\l+0.5,\N-\k-0.5){\bf\pgfmathparse{\promotiondiagramfilling[\k][\l]}\pgfmathresult};
  }
\end{tikzpicture}
$}
\end{center}
\caption{On the left the filled promotion matrix of $\fan=(000, 111, 222, 311, 422, 331, 222, 111, 000)$. On the right the
triangular growth diagram for the same fan.}
\label{fig:fan-example}
\end{figure}

%%%%%%%%%%%%%%%%%%%%%%%%%%%%%%%%%%%%%%%%%%%%%%%%%%%%%%%%%%%
\subsection{Fomin growth diagrams: Rule RSK}
\label{section.Fomin RSK}
Given a filling of a Ferrers shape $(\lambda_1, \dots, \lambda_\ell)$ with non-negative integers, we produce a ``blow up" construction of the original shape for the RSK variant which
contains north-east chains of $1$'s, as done by \cite{Krattenthaler.2006}. We begin by separating entries. If a cell is filled with positive entry $m$,
we replace the cell with an $m\times m$ grid of cells with $1$'s along the off-diagonal (from bottom-left to top-right). If there exist several nonzero
entries in one column, we arrange the grids of cells also from bottom-left to top-right, so that the $1$'s form a north-east chain in each column.
We make the same arrangements for the rows, also establishing a north-east chain in each row. The resulting blow up Ferrers diagram
then contains $c_j$ columns in the original $j$-th column, where $c_j$ is equal to the sum of the entries in column $j$ or $1$ if the $j$-th
column contains only $0$'s, and $r_i$ rows in the original $i$-th row, where $r_i$ is equal to the sum of the entries in row $i$ or $1$ if the $i$-th
row contains only $0$'s.

Since the filling of the blow up growth diagram consists of $1$'s and $0$'s, we now apply the forward local rules. To start, we label all of the
corners of the cells on the left side and the bottom side of the blow up growth diagram by $\emptyset$. Then, we apply the forward local rules to
determine the partition labels of the other corners, using the $0/1$ filling and partitions defined in previous iterations of the forward local rule.
Finally, we ``shrink back" the labelled blow up growth diagram to obtain a labelling of the original Ferrers diagram by only partitions labelling positions
$\{(c_1 + \cdots + c_j,  r_{i}+  \cdots + r_{\ell}) \mid 1 \leqslant i \leqslant \ell, 1 \leqslant j \leqslant \lambda_{\ell-i+1}\}$. To shrink back, we assign the
partition labelling  $(c_1 + \cdots + c_j,  r_{i}+  \cdots + r_{\ell})$ in the blow up growth diagram to the position $(j, \ell-i+1)$ in the original Ferrers diagram.
The resulting labelling has the property that partitions on adjacent corners differ by a horizontal strip~\cite[Theorem 7]{Krattenthaler.2006}.

The direct RSK forward rules are as follows \cite[Section 4.1]{Krattenthaler.2006}:
Consider a cell as in Figure~\ref{fig:growth-labels} filled by a non-negative integer $m$, and labelled by the partitions $\gamma, \delta, \alpha$,
where $\gamma\subset \delta$ and $\gamma\subset \alpha$, $\alpha/\gamma$ and $\delta/\gamma$ are horizontal strips. Then $\beta$ is determined
by the following procedure:
\begin{description}
\item[RSK F0] Set $\CARRY:=m$ and $i:=1$.
\item[RSK F1] Set $\beta_i:= \max\{\delta_i,\alpha_i\}+\CARRY$
\item[RSK F2] If $\beta_i = 0$, then stop and return $\beta=(\beta_1,\beta_2,\dots,\beta_{i-1})$. If not, then set $\CARRY:=\min\{\delta_i,\alpha_i\}-\gamma_i$
and $i:=i+1$ and go to F1.
\end{description}

Note that this algorithm is reversible. Given $\beta,\delta,\alpha$ such that $\beta/\delta$ and $\beta/\alpha$ are horizontal strips, the backwards
algorithm is defined by the following rules:
\begin{description}
\item[RSK B0] Set $i:=\max\{j \mid \text{$\beta_j$ is positive}\}$ and $\CARRY:=0$.
\item[RSK B1] Set $\gamma_i:=\min\{\delta_i,\alpha_i\}-\CARRY$.
\item[RSK B2] Set $\CARRY:=\beta_i-\max\{\delta_i,\alpha_i\}$ and $i:=i-1$. If $i=0$, then stop and return $\gamma=(\gamma_1,\gamma_2,\dots)$
and $m=\CARRY$. If not, got to B1.
\end{description}

\begin{construction}
\label{construction.RSKgrowthrule}

Let $\vac = (\emptyset=\mu^0,\mu^1,\dots,\mu^{n}=\emptyset)$ be a vacillating tableau of weight zero. The associated triangular growth
diagram is the Ferrers shape $(n-1, n-2, \ldots, 2, 1, 0)$. Label the cells according to the following specification:
\begin{enumerate}
\item Label the north-east corners of the cells on the main diagonal from the top-left to the bottom-right with the partitions $2\mu^i$.
\item For each $i\in \{0,\dots,n-1\}$ label the corner on the first subdiagonal adjacent to the labels $2\mu^{i}$ and $2\mu^{i+1}$ with the partition
$2(\mu^{i} \cap \mu^{i+1})$ when $\mu^{i}\neq \mu^{i+1}$ and the partition obtained by removing a cell from the final row of $2\mu^i$ when $\mu^i = \mu^{i+1}$.
\item Use the backwards rules RSK B0, B1 and B2 to obtain all other labels and the fillings of the cells.
\end{enumerate}

We denote by $\Gvac(\vac)$ the symmetric $n\times n$ matrix one obtains from the filling of the growth diagram by putting zeros in the
unfilled cells and along the diagonal and completing this to a symmetric matrix.

Starting from a filling of a growth diagram one obtains the vacillating tableau by setting all vectors on corners on the bottom and left
border of the diagram to be the empty partition and applying the forwards growth rules RSK F0-F2.
\end{construction}

The triangular growth diagram of the vacillating tableau from Example~\ref{ex:vac}
is depicted in Figure~\ref{fig:vac-growth-example}.

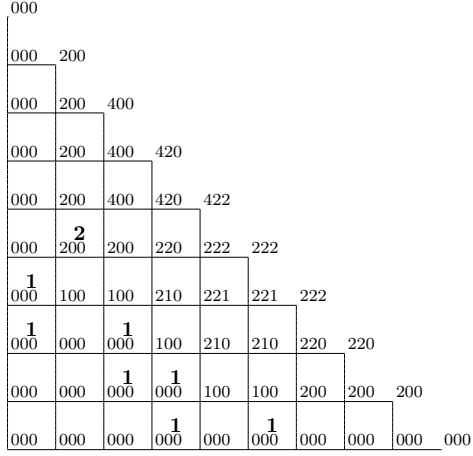
\begin{figure}
\def\N{9}\def\Nminusone{8}
\def\promotiondiagramfilling{{
{"", "", "", "", "", "1", "1", "", ""},
{"", "", "", "", "2", "", "", "", ""},
{"", "", "", "", "", "", "1", "1", ""},
{"", "", "", "", "", "", "", "1", "1"},
{"", "2", "", "", "", "", "", "", ""},
{"1", "", "", "", "", "", "", "", "1"},
{"1", "", "1", "", "", "", "", "", ""},
{"", "", "1", "1", "", "", "", "", ""},
{"", "", "", "1", "", "1", "", "", ""}}}
\def\growthlabels{{{"000"}, {"000", "000"}, {"000", "000", "000"}, {"000", "000", "000", "000"}, {"000", "100", "000", "000", "000"}, {"000", "200", "100", "100", "100", "000"}, {"000", "200", "200", "210", "210", "100", "000"}, {"000", "200", "400", "220", "221", "210", "200", "000"}, {"000", "200", "400", "420", "222", "221", "220", "200", "000"}, {"000", "200", "400", "420", "422", "222", "222", "220", "200", "000"}}}
 \begin{center}
\scalebox{0.8}{
\begin{tikzpicture}[scale=0.8,baseline={([yshift=-.8ex]current bounding box.center)}]%,baseline=(current bounding box.center)]%,font=\sffamily]
\draw (1,\N+0.5) {};

\foreach\k in {0,1,...,\N}
  {
	\draw[solid, black] (0,\k) -- (\N-\k,\k);
  	\draw[solid, black] (\N-\k,\k) -- (\N-\k,0);
  	\draw[dotted, gray] (\N-\k,\k) -- (\N-\k,0); %
\foreach \l in {0,...,\k}
   \node at (\l+0.35,\k-\l+0.2){\footnotesize\pgfmathparse{\growthlabels[\k][\l]}\pgfmathresult};
  }

   \foreach\k in {0,...,\Nminusone}
  {
  \foreach \l in {0,...,\k}
   \node at (\l+0.5,\N-\k-0.5){\bf\pgfmathparse{\promotiondiagramfilling[\k][\l]}\pgfmathresult};
  }
\end{tikzpicture}
}
\end{center}
\caption{The triangular growth diagram for the vacillating tableau $\vac = (000, 100, 200, 210, 211, 111, 111, 110, 100, 000)$.}
\label{fig:vac-growth-example}
\end{figure}

%%%%%%%%%%%%%%%%%%%%%%%%%%%%%%%%%%%%%%%%%%%%%%%%%
\section{Main results}
\label{section.main results}

In this section, we state and prove our main results for oscillating tableaux, fans of Dyck paths, and vacillating tableaux.
In particular, we show in Theorems~\ref{thm:osc-main}, \ref{thm:fans-main} and~\ref{thm:vac-main} that the fillings of the growth diagrams
coincide with the fillings of the promotion--evacuation diagrams. This in turn shows that the maps $\Mfan$, $\Mvactoosc$ and $\Mvactofan$ are
injective. Having these injective maps to chord diagrams gives a first step towards a diagrammatic basis for the invariant subspaces.
In Section~\ref{section.cyclic sieving}, we give various new cyclic sieving phenomena associate to the promotion action.

We start by the following notation used later in this section.
Let $M = (a_{i,j})_{i,j = 1}^{kn}$ be a $k n\times k n$ matrix. It will often be convenient to consider $M$ as the block matrix
$(B^{(k)}_{i,j})_{i,j = 1}^{n}$, where $B^{(k)}_{i,j}$ is the $k \times k$ matrix given by $(a_{p,q})_{p = k(i-1)+1, q = k(j-1)+1}^{ki, kj}$. We also follow
the convention that for all $p,q>n$ we have $B^{(k)}_{p,q} \coloneqq B^{(k)}_{i,j}$, where $p \equiv i \mod n$ and $q \equiv  j \mod n$.

\begin{definition}
For a $k n\times k n$ matrix $M$ with block matrix decomposition given by $(B^{(k)}_{i,j} )_{i,j = 1}^{n}$, denote by
$\mathsf{blocksum}_{k}(M)$ the $n\times n$ matrix $(b_{i,j})_{i,j = 1}^{n}$, where $b_{i,j}$ is equal to the sum of all entries in $B^{(k)}_{i,j}$.
\end{definition}

Given an $n \times n$ matrix $M = (a_{i,j})_{i,j = 1}^{n}$, we recursively define its skewed partial row sums $r_{i,j}$ by setting $r_{i,i} = 0$ for all 
$1 \leqslant i \leqslant n$ and letting $r_{i, j+1} = r_{i, j} + a_{i,j}$ for $1 \leqslant j \leqslant n-1$. Note that as before, we use the convention that 
$a_{p,q} = a_{i,j}$ whenever $p \equiv i \mod n$ and $q \equiv j \mod n$. Similarly, the skewed partial column sums $c_{i,j}$ 
can be defined. Partial inverses to $\mathsf{blocksum}_{k}$ are given by $\blowupSE_k$ and $\blowupNE_k$ which we presently define. 

\begin{definition}
Let $M= (a_{i,j})_{i,j = 1}^{n}$ be a matrix with non-negative integer entries such that for each row and for each column the sum of the entries is $k$.
Let $r_{i,j}$ and $c_{i,j}$ be its skewed partial row and column sums respectively. Let $B_{i,j}^{SE}$ be the $k\times k$ 
matrix, where $B_{i,j}^{SE}$ is the zero-matrix if $a_{i,j} = 0$ and a zero-one-matrix if $a_{i,j} \not = 0$ consisting of $1$'s in positions 
$(r_{i,j}+1, c_{i,j}+1), \ldots,(r_{i,j}+a_{i,j}, c_{i,j}+a_{i,j})$ and zeros elsewhere. We define $\blowupSE(M)$ to be the block matrix $(B_{i,j}^{SE})_{i,j=1}^n$.

Similarly, let $B_{i,j}^{NE}$ be the $k\times k$ matrix, where $B_{i,j}^{BE}$ is the zero-matrix if $a_{i,j} = 0$ and a zero-one-matrix if $a_{i,j} \not = 0$ 
consisting of $1$'s in positions $(k-r_{i,j}, k-c_{i,j}-a_{i,j}+1), \ldots,(k-r_{i,j}-(a_{i,j}-1), k-c_{i,j})$ and zeros elsewhere. We define $\blowupNE(M)$ 
to be the block matrix $(B_{i,j}^{NE})_{i,j=1}^n$.
\end{definition}

\begin{remark}
\label{remark:blowup-unique}
Note that $\blowupSE(M)$ and $\blowupNE(M)$ are the unique $k n\times k n$ zero-one-matrices whose $\mathsf{blocksum}_{k}$ 
equals $M$ and for all $1\leqslant i\leqslant n$, the nonzero entries in the matrices 
\[
\begin{split}
	[&B_{i,i}, B_{i,i+1} , B_{i,i+2}, \ldots, B_{i,i+n-1}] \qquad \text{and}\\
	[&B_{i,i}, B_{i+1,i} , B_{i+2,i}, \ldots, B_{i+n-1,i}]
\end{split}
\]
form a south-east chain or a north-east chain, respectively.
\end{remark}

%%%%%%%%%%%%%%%%%%%%%%%%%%%%%%%%%%%%%%%%%%%%%%%%%
\subsection{Results for oscillating tableaux}

The next result was not stated explicitly in~\cite{PfannererRubeyWestbury.2020}, but can be deduced from the proof in the paper.

\begin{theorem}
\label{thm:osc-main}
For an oscillating tableau of weight zero $\osc$ the fillings of the growth diagram (Construction \ref{construction.growthdiagram}) and
the fillings of the promotion-evacuation (Construction \ref{construction:tableau_to_chord}) diagram coincide, that is
\[
	\Gosc(\osc)=\Mosc(\osc).
\]
\end{theorem}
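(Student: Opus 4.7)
The plan is to verify, cell by cell, that the local rule underlying the promotion matrix construction agrees with the backward growth rules B1--B6 for oscillating tableaux. Once this equivalence of local rules is established, the two fillings must coincide by uniqueness, since both diagrams are determined by the same boundary data, namely the sequence $\osc$ on the diagonal together with the intersections $\mu^i \cap \mu^{i+1}$ on the first subdiagonal.

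First I would set up a uniform indexing. Write $\mu^{i,j}$ for the $(j-i)$-th partition of $\mathsf{pr}^i(\osc)$, with indices taken modulo $n$. After the cut-and-glue step (Figure~\ref{fig:promotion-diagram}), the promotion matrix labels the grid corner $(i,j)$ with $\mu^{i,j}$, so that the top and bottom boundaries and the left and right boundaries carry $\osc$. I would then check that the corners along the main diagonal of the promotion matrix also carry the partitions of $\osc$, and that the corners along the first subdiagonal carry $\mu^i \cap \mu^{i+1}$. For the diagonal this is immediate from $\mathsf{pr}^n = \mathsf{id}$; for the subdiagonal, one uses that for minuscule crystals Lenart's rule reduces to $\mu = \dom_{\fH_r}(\kappa + \nu - \lambda)$ and that for consecutive entries of an oscillating tableau this dominant representative is exactly the intersection $\mu^i \cap \mu^{i+1}$.

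Next I would verify the cell-level equivalence of local rules. For a cell with corners labelled $\lambda, \kappa, \nu, \mu$ as in~\eqref{equation.labelling of cells}, where $\kappa\subset\lambda$ and $\kappa\subset\nu$ differ by at most one cell, there are two cases. If $\kappa+\nu-\lambda$ has only non-negative entries, then $\mu = \dom_{\fH_r}(\kappa+\nu-\lambda)$ is simply the sorted vector, which matches exactly one of the rules B1--B5 depending on the pattern of equalities among $\lambda,\kappa,\nu$, and $\filling(\lambda,\kappa,\nu,\mu)=0$. If $\kappa+\nu-\lambda$ has a negative entry (necessarily exactly one, again by the minuscule structure), then the sign-flip required to reach $\dom_{\fH_r}(\kappa+\nu-\lambda)$ adds a box to the first part of $\mu$ compared with the sorted version, which is precisely rule B6, and $\filling(\lambda,\kappa,\nu,\mu)=1$. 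Thus Lenart's local rule together with $\filling$ computes exactly the same triple $(\mu,\text{filling})$ as the backward rules.

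Having matched the boundary data and the local rules, an induction on the cells (traversed so that three corners are known before the fourth is determined) forces $\Gosc(\osc)=\Mosc(\osc)$. The main obstacle is the bookkeeping in the first step: keeping straight how the square promotion matrix after cut-and-glue aligns with the triangular growth diagram, and verifying that the symmetrization in $\Gosc$ and the symmetry of the promotion matrix (a consequence of Proposition~\ref{proposition:rotation_promotion} applied to evacuation as well) produce compatible $n\times n$ matrices rather than matrices that differ by a transposition or a reindexing. Once that identification is made explicit, the core argument is the finite case analysis carried out in~\cite{PfannererRubeyWestbury.2020}.
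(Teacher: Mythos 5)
The paper's own proof is a citation to \cite[Corollary 6.17, Lemma 6.26]{PfannererRubeyWestbury.2020}, so the question is whether your sketch of a direct argument is sound. It is not: the boundary identification on which your uniqueness argument rests is false. By definition $\mu^{i,j}$ is the $(j-i)$-th entry of $\mathsf{pr}^{i}(\osc)$, so the corners on the main diagonal of the promotion matrix are $\mu^{i,i}$, the $0$-th entries of the iterated promotions, which are all the empty partition --- not the partitions of $\osc$. What $\mathsf{pr}^n=\mathsf{id}$ gives is that the \emph{bottom border} repeats the top border; $\osc$ appears on the four borders of the promotion matrix, its main diagonal is identically $\emptyset$, and its first subdiagonal is certainly not $\mu^i\cap\mu^{i+1}$ (in the worked example for fans it is constantly $(1,1,\dots,1)$). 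The growth diagram, by contrast, carries $\osc$ on its hypotenuse and $\emptyset$ on its two legs. So the two constructions label the corners of the common triangle with genuinely \emph{different} partitions, and the theorem asserts only that the integer \emph{fillings} agree; there is no shared boundary data from which a ``same local rules, same boundary, hence same labelling'' induction could start.

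This is exactly why the statement is a theorem rather than an observation: one has to track how the growth diagram (equivalently, the associated matching) transforms under a single application of promotion and show that reading off successive rows reproduces the promotion-matrix filling. That dynamical argument --- not a static comparison of B1--B6 with Lenart's rule on one cell --- is the content of \cite[Corollary 6.17, Lemma 6.26]{PfannererRubeyWestbury.2020}. Your cell-level observation that $\filling=1$ exactly when passing to $\dom_{\fH_r}(\kappa+\nu-\lambda)$ requires a sign flip is correct and is an ingredient of such a proof, but note also that Lenart's rule determines the bottom-right corner from the other three while the backward growth rules determine the bottom-left corner, so even the local comparison must be phrased as an equivalence of relations on labelled squares rather than of deterministic rules propagating in the same direction.
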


\begin{proof}
This follows from the proof of~\cite[Corollary 6.17, Lemma 6.26]{PfannererRubeyWestbury.2020}.
\end{proof}

%%%%%%%%%%%%%%%%%%%%%%%%%%%%%%%%%%%%%%%%%%%%%%%%%
\subsection{Results for $r$-fans of Dyck paths}

We state our main results.

\begin{theorem}\label{thm:fans-main}
For an $r$-fan of Dyck paths $\fan$
\[
	\Gfan(\fan) = \Mfan(\fan).
\]
In other words, the fillings of its growths diagram (Construction~\ref{construction.Burgegrowthrule}) and the fillings of the promotion-evacuation 
diagram coincide.
\end{theorem}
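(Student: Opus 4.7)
The plan is to leverage the virtualization map $\fantoosc$ from $r$-fans of Dyck paths to oscillating tableaux together with Theorem~\ref{thm:osc-main}, which establishes the analogous equality for oscillating tableaux. Following the schematic in Figure~\ref{figure.overview r fans}, the strategy is that on the promotion side virtualization corresponds to taking blocksums of the chord diagram matrix, while on the growth diagram side it corresponds to Krattenthaler's blow-up construction, and these two operations are inverses. Combining two commuting diagrams with the oscillating case then squeezes out the result.

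Concretely, I would first prove
\[
    \Mfan(\fan) \;=\; \mathsf{blocksum}_r\bigl(\Mosc(\fantoosc(\fan))\bigr).
\]
Since $\fantoosc$ arises from the virtual crystal embedding of Definition~\ref{definition.virtual_spintovector} and Lemma~\ref{lemma.virtual_intertwine} shows $\Psi$ intertwines crystal operators, the crystal commutor (and hence promotion) on $\mathcal{B}_{\mathsf{spin}}^{\otimes n}$ corresponds under $\fantoosc$ to promotion on $\mathcal{C}_\square^{\otimes rn}$. Consequently the $n$ iterates of promotion used to build the promotion matrix of $\fan$ sit inside the $rn$ iterates used for $\fantoosc(\fan)$ as every $r$-th row and column. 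The crux is then the combinatorial identity at the filling level: the number of negative entries of $\kappa+\nu-\lambda$ at a fan cell equals the sum over the corresponding $r\times r$ sub-block of oscillating-tableau cells of the indicator that $\kappa'+\nu'-\lambda'$ has a negative entry. This is proved by tracking, step by step, how $\Psi$ writes a single fan transition as an ordered product of $r$ vector moves, each contributing at most one box-removal.

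Second, I would prove
\[
    \Gfan(\fan) \;=\; \mathsf{blocksum}_r\bigl(\Gosc(\fantoosc(\fan))\bigr).
\]
This is essentially the content of Krattenthaler's reduction of direct Burge growth rules to $0/1$ growth rules via the SE-blow-up described in Section~\ref{section.Fomin Burge}: $\blowupSE_r$ applied to the Burge-filled triangular growth diagram of $\fan$ produces a $0/1$ filling to which the oscillating-tableau forward rules F1--F6 apply, and by the uniqueness in Remark~\ref{remark:blowup-unique} it suffices to check that the diagonal labels agree with those prescribed by $\fantoosc$. That verification reduces to matching the order in which $\Psi$ processes the coordinates of a fan step against the SE-chain order in which Krattenthaler's blow-up distributes the $\pm 1$'s; this is a direct comparison between Definition~\ref{definition.V def} and the SE convention.

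Once both displayed identities are in hand, combining them with Theorem~\ref{thm:osc-main} applied to the oscillating tableau $\fantoosc(\fan)$ gives
\[
    \Gfan(\fan)
    = \mathsf{blocksum}_r\bigl(\Gosc(\fantoosc(\fan))\bigr)
    = \mathsf{blocksum}_r\bigl(\Mosc(\fantoosc(\fan))\bigr)
    = \Mfan(\fan),
\]
which is the theorem. I expect the main obstacle to be the second step: carefully aligning the SE-chain arrangement used by Krattenthaler's blow-up with the specific ordering in which the virtualization $\Psi$ breaks a weight vector $(\pm 1,\ldots,\pm 1)$ into an ordered tensor of barred and unbarred letters, and verifying that the intermediate partitions produced by the growth rules on the blown-up diagram coincide on the nose with the weights $\mu^{tr+s}$ from the definition of $\fantoosc$. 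The first step, by contrast, is essentially bookkeeping once the commutativity of promotion with virtualization is invoked.
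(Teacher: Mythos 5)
Your proposal follows essentially the same route as the paper: it establishes $\Mfan = \mathsf{blocksum}_r \circ \Mosc \circ \fantoosc$ (the paper's Proposition~\ref{proposition.promotion fillings}, resting on the compatibility of promotion with virtualization as in Lemma~\ref{lemma:fan_osc_promotion_equivalence} and a cell-by-cell count of negative entries across each $r\times r$ block) and $\Gfan = \mathsf{blocksum}_r \circ \Gosc \circ \fantoosc$ (via the south-east chain structure of Lemma~\ref{lemma.blocksum se}, the matching of subdiagonal labels $\mu^k\cap\mu^{k+1}$, and the injectivity of the Burge rules), then sandwiches these with Theorem~\ref{thm:osc-main}. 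The decomposition, the key lemmas, and even the identification of the blow-up/diagonal-label alignment as the delicate step all coincide with the paper's argument.
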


In particular we obtain the corollary:
\begin{corollary}
The map $\Mfan$ is injective.
\end{corollary}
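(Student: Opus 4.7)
My plan is to deduce injectivity of $\Mfan$ directly from Theorem~\ref{thm:fans-main}, by reducing it to injectivity of the growth-diagram map $\Gfan$ and then invoking the reversibility of the Burge rules built into Construction~\ref{construction.Burgegrowthrule}.

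By Theorem~\ref{thm:fans-main}, $\Mfan(\fan) = \Gfan(\fan)$ for every $r$-fan of Dyck paths $\fan$, so it suffices to show that $\fan \mapsto \Gfan(\fan)$ is injective. The symmetric matrix $\Gfan(\fan)$ determines its upper-triangular part, which is precisely the filling of the triangular Ferrers shape $(n-1, n-2, \ldots, 1, 0)$ produced in Construction~\ref{construction.Burgegrowthrule}. Given only this triangular filling (and no knowledge of the original fan), I would apply the forward Burge rules F0, F1, F2 starting from empty partitions on the bottom and left borders, exactly as described in the final paragraph of Construction~\ref{construction.Burgegrowthrule}. Since F0--F2 determine the north-east corner label of each cell uniquely from the cell's filling and the three other corner labels, the labels propagate unambiguously cell by cell, yielding labels at every corner of the triangle. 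In particular, one recovers a well-defined sequence of partitions on the north-east corners along the main diagonal, which by construction equals $(\emptyset = \mu^0, \mu^1, \ldots, \mu^n = \emptyset)$, i.e., the original fan $\fan$.

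The only point that needs justification is that the forward Burge rules genuinely invert the backwards Burge rules that were used to produce $\Gfan(\fan)$. This is the standard Fomin-growth-diagram feature recorded in Section~\ref{section.Fomin Burge}: the Burge forward algorithm F0--F2 and backward algorithm B0--B2 are mutually inverse operations on a cell, so repeated application preserves all labels that were assigned when the diagram was first constructed from $\fan$. Therefore $\Gfan$ is injective, and combining this with Theorem~\ref{thm:fans-main} gives the injectivity of $\Mfan$. There is no significant obstacle here beyond recalling the invertibility of the Burge local rules; the real work has already been done in Theorem~\ref{thm:fans-main}.
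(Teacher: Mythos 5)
Your proof is correct and follows exactly the route the paper intends: the corollary is stated as an immediate consequence of Theorem~\ref{thm:fans-main} together with the reversibility of the Burge growth rules recorded in the last paragraph of Construction~\ref{construction.Burgegrowthrule}, which is precisely your argument. No gaps.
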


We now state and prove some results which are needed for the proof of Theorem~\ref{thm:fans-main}.

\begin{lemma} \label{lemma:fan_osc_promotion_equivalence}
Let $\fan$ be an $r$-fan of Dyck paths of length $n$. Then
\[
	 \fantoosc \circ \mathsf{pr}_{\spincrystal}(\fan) = \mathsf{pr}^r_{\cboxcrystal} \circ \fantoosc (\fan).
\]
\end{lemma}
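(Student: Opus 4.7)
The plan is to exploit the naturality of the crystal commutor under the virtual crystal embedding $\Psi\colon \spincrystal \to \mathcal{V} \subseteq \cboxcrystal^{\otimes r}$ of Definition~\ref{definition.virtual_spintovector}. First, I would observe that on highest weight elements of weight zero, the map $\fantoosc$ is nothing more than the restriction of $\Psi^{\otimes n}\colon \spincrystal^{\otimes n}\to (\cboxcrystal^{\otimes r})^{\otimes n}=\cboxcrystal^{\otimes rn}$: by Lemma~\ref{lemma.virtual_intertwine}, $\Psi$ and its tensor powers intertwine all crystal operators, and the dominant weights $\mu^{tr+s}$ appearing in the definition of $\fantoosc$ are exactly the weights obtained by summing successive weights of the tensor factors of $\Psi^{\otimes n}(u)$, where $u\in \spincrystal^{\otimes n}$ is the highest weight element corresponding to $\fan$.

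Second, I would argue that $\Psi^{\otimes n}$ intertwines the Lusztig involution $\eta$, and hence the crystal commutor. Since the longest Weyl group element for both $B_r$ and $C_r$ acts as $-1$, the involution $\eta$ is the unique map on each connected component that reverses all crystal arrows and swaps the highest with the lowest weight vector. Because this characterization is intrinsic to the crystal graph and $\Psi^{\otimes n}$ is a crystal morphism onto its image, we obtain the intertwining
\[
	(\Psi\otimes \Psi^{\otimes(n-1)})\circ \sigma_{\spincrystal^{\otimes(n-1)},\,\spincrystal}
	= \sigma_{\cboxcrystal^{\otimes r(n-1)},\,\cboxcrystal^{\otimes r}}\circ (\Psi^{\otimes(n-1)}\otimes \Psi)
\]
when applied to highest weight elements (so that the commutor is defined component-wise).

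Third, I would verify the coassociativity-type identity
\[
	\sigma_{\cboxcrystal^{\otimes r(n-1)},\,\cboxcrystal^{\otimes r}}
	= \bigl(\sigma_{\cboxcrystal^{\otimes(rn-1)},\,\cboxcrystal}\bigr)^{r},
\]
that is, moving the block of $r$ rightmost tensor factors past the leftmost $r(n-1)$ factors equals moving them one at a time. This follows by unwinding the definition of $\sigma$ in terms of $\eta$: both sides send a highest weight element $b\otimes v_r\otimes\dots\otimes v_1$ to the unique element obtained by cyclically rotating the $r$ rightmost factors to the front, because both correspond to the Lusztig-involution-based formula applied to the same reordering of tensor factors. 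Combining with Step 2 immediately yields
\[
	\fantoosc \circ \mathsf{pr}_{\spincrystal}
	= \sigma_{\cboxcrystal^{\otimes r(n-1)},\,\cboxcrystal^{\otimes r}} \circ \fantoosc
	= \mathsf{pr}^{r}_{\cboxcrystal}\circ \fantoosc.
\]

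The main obstacle will be Step 3, verifying that the commutor decomposes compatibly with tensor factorization. The commutor is not braided, so this identity must be deduced carefully from the explicit formula $\sigma_{A,B}(a\otimes b)=\eta_{B\otimes A}(\eta_B(b)\otimes \eta_A(a))$ rather than from any braid-type relation; an induction on $r$, using that $\eta_{A\otimes B\otimes C}$ interacts with $\eta_{B\otimes C}$ in a controlled way on highest weight elements, should suffice. As a sanity check, one can verify the statement for small cases ($n=2$ and $r=2,3$) directly using Lenart's local rule (Theorem~\ref{thm:local_commutor}), which applies since both $\spincrystal$ and $\cboxcrystal$ are minuscule and both Weyl groups $W_{B_r}$ and $W_{C_r}$ coincide with the hyperoctahedral group $\fH_r$ controlling $\dom_W$.
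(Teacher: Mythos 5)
Your Steps 1 and 2 match the second half of the paper's argument: the paper likewise identifies $\fantoosc$ with the restriction of $\Psi^{\otimes n}$ and uses that Lusztig's involution interchanges $e_i$ and $f_i$ in both types $B_r$ and $C_r$ (together with the fact that virtualization is preserved under tensor products) to conclude $\Psi(\mathsf{pr}_{\spincrystal}(w)) = \mathsf{pr}_{\mathcal{V}}(\Psi(w)) = \mathsf{pr}_{\cboxcrystal^{\otimes r}}(v)$. That part of your plan is sound.

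The gap is in Step 3. The identity $\sigma_{\cboxcrystal^{\otimes r(n-1)},\,\cboxcrystal^{\otimes r}} = \bigl(\sigma_{\cboxcrystal^{\otimes(rn-1)},\,\cboxcrystal}\bigr)^{r}$ is a hexagon-type decomposition, and the Henriques--Kamnitzer commutor satisfies only the coboundary (cactus) relations, not the hexagon axiom; so this identity is \emph{not} a formal consequence of the definition of $\sigma$ in terms of $\eta$, and your stated justification --- that both sides ``cyclically rotate the $r$ rightmost factors to the front'' --- is not a correct description of either map in general. You flag the difficulty, but the proposed fix (an induction on $r$ via properties of $\eta$) does not supply the ingredient that actually makes the identity true here. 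The paper's proof of this half is a direct computation with Lenart's local rules that crucially uses the special structure of tableaux in the image of $\fantoosc$: the first $r$ entries of $\mu=\fantoosc(\fan)$ are forced to be the columns $(1^0),(1^1),\dots,(1^r)$ (and dually for the last $r$ entries), which makes the first $r$ rows of the iterated promotion diagram collapse along a staircase, so that the $r$ successive one-column promotion grids assemble into the single $r$-column grid computing $\sigma_{\cboxcrystal^{\otimes r(n-1)},\,\cboxcrystal^{\otimes r}}$. Without invoking this boundary structure (i.e., restricting to the image of $\fantoosc$ rather than arbitrary highest weight words in $\cboxcrystal^{\otimes rn}$), Step 3 does not go through, so as written the proposal has a genuine hole at its central step.
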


\begin{proof}
Let $ \fantoosc (\fan) = \mu = (\emptyset = \mu^{(0, 0)}, \ldots , \mu^{(0,rn)} = \emptyset) $. We first prove that  
$\mathsf{pr}_{\cboxcrystal}^r(\mu) = \mathsf{pr}_{\cboxcrystal^{\otimes r}}(\mu)$. Let $\mathsf{pr}_{\cboxcrystal}^i (\mu) = 
(\emptyset = \mu^{(i,0)}, \ldots , \mu^{(i,rn)} = \emptyset)$. From the definition of $\fantoosc$, we have $\mu^{(0,k)} = (1^k)$ for all 
$0 \leqslant k \leqslant r$ where  $(1^0)$ denotes the empty partition $\emptyset$. Using the local rules for promotion and induction, we see that 
the sequence of partitions $(\mu^{(k,0)}, \ldots, \mu^{(k,r-k)})$ is equal to $((1^0), \ldots, (1^{r-k}))$ for all $0 \leqslant k \leqslant r$. This implies 
the following equality
\begin{align*}
    \mu &= ((1^0), (1^1), \ldots, (1^r), \mu^{(0,r+1)}, \ldots, \mu^{(0,rn)})\\
    &= (\mu^{(r,0)},  \mu^{(r-1,1)}, \ldots, \mu^{(0,r)}, \mu^{(0,r+1)}, \ldots, \mu^{(0,rn)}).
\end{align*}
By a similar argument, the sequence of partitions $(\mu^{(k,rn-k)}, \ldots, \mu^{(k,rn)})$ is equal to $((1^{k}), \ldots, (1^{0}))$ for all 
$1 \leqslant k \leqslant r$ implying
\begin{align*}
    \mathsf{pr}_{\cboxcrystal}^r(\mu) &= (\mu^{(r,0)},  \mu^{(r,1)}, \ldots, \mu^{(r,r(n-1)-1)}, (1^r), (1^r-1),  \ldots, (1^0))\\
    &= (\mu^{(r,0)},  \mu^{(r,1)}, \ldots, \mu^{(r,rn-r-1)}, \mu^{(r,rn-r)}, \mu^{(r-1,rn-r-1)}, \ldots, \mu^{(0, r)}).
\end{align*}
By Theorem ~\ref{thm:local_commutor}, we obtain the desired equality
\begin{align*}
	\mathsf{pr}_{\cboxcrystal^{\otimes r}}(\mu) &= \mathsf{pr}_{\cboxcrystal^{\otimes r}}(\mu^{(r,0)},  
	\mu^{(r-1,1)}, \ldots, \mu^{(0,r)}, \mu^{(0,r+1)}, \ldots, \mu^{(0,rn)})\\
	&= (\mu^{(r,0)},  \mu^{(r,1)}, \ldots, \mu^{(r,r(n-1))}, \mu^{(r-1,r(n-1)+1)}, \ldots, \mu^{(0,rn)})
	&= \mathsf{pr}_{\cboxcrystal}^r(\mu).
\end{align*}

Let $w = w_{n}\otimes w_{n-1} \otimes \cdots \otimes w_{1} \in \mathcal{B}_{\mathsf{spin}}^{\otimes n}$ and 
$v = v_{rn} \otimes v_{rn-1} \otimes \cdots \otimes v_{1} \in (\mathcal{C}_{\square}^{\otimes r})^{\otimes n}$ be the highest weight crystal elements 
associated to $\fan$ and $\mu$, respectively. In order to show $\fantoosc \circ \mathsf{pr}_{\spincrystal}(\fan)
= \mathsf{pr}_{\cboxcrystal^{\otimes r}}(\mu)$, it suffices to show that $\Psi(\mathsf{pr}_{\spincrystal}(w)) =  \mathsf{pr}_{\cboxcrystal^{\otimes r}}(v)$,
where $\Psi$ is the crystal isomorphism defined in Definition ~\ref{definition.virtual_spintovector}. Let $\mathcal{V} \subseteq
\cboxcrystal^{\otimes r}$ be the virtual crystal defined in Definition ~\ref{definition.V def}. As $\Psi$ is a crystal isomorphism, we have 
$\Psi(\mathsf{pr}_{\spincrystal}(w)) = \mathsf{pr}_{\mathcal{V}}(\Psi(w)) = \mathsf{pr}_{\mathcal{V}}(v)$. 
As Lusztig's involution for crystals of type $B_{r}$ and $C_{r}$ interchanges the crystal operators $f_{i}$ and $e_{i}$, the virtualization induced 
by the embedding $B_{r} \hookrightarrow C_{r}$ commutes with Lusztig's involution. In addition virtualization is preserved under tensor products 
(see for example ~\cite[Theorem 5.8]{BumpSchilling.2017}). Thus, we have $\mathsf{pr}_{\mathcal{V}}(v) = \mathsf{pr}_{\cboxcrystal^{\otimes r}}(v)$.
\end{proof}

\begin{lemma}
\label{lemma.blocksum se}
Let $\fan$ be an $r$-fan of Dyck paths with length $n$, and let $(B^{(r)}_{i,j})_{i,j = 1}^{n}$ be the block matrix decomposition
of the $r n\times r n$ adjacency matrix $\Mosc(\fantoosc \fan)$. Then for all $1\leqslant i \leqslant n$, the nonzero entries
in the matrices
\[
\begin{split}
	[&B^{(r)}_{i,i+1} , B^{(r)}_{i,i+2}, \ldots, B^{(r)}_{i,i+n-1}] \qquad \text{and}\\
	[&B^{(r)}_{i+1,i} , B^{(r)}_{i+2,i}, \ldots, B^{(r)}_{i+n-1,i}]
\end{split}
\]
form a south-east chain of $r$ $1$'s.
\end{lemma}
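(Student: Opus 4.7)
My plan is to establish both a count and a structural property. First, I would identify the corners of each $r\times r$ block $B^{(r)}_{I,J}$ with the corresponding corners of the fan-cell $(I,J)$: using Lemma~\ref{lemma:fan_osc_promotion_equivalence} together with the construction of the oscillating promotion matrix, the label at each $(rI', rJ')$-corner of the oscillating matrix matches the corresponding label in the fan promotion matrix. The intermediate labels along the borders of each block are obtained by applying $\Psi$ to the spin element of the corresponding fan edge, and the interior labels are then determined by Lenart's local rule.

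Next, I would count the total number of ones in the off-diagonal part of the $i$-th block-row. Each row of $\Mosc(\fantoosc(\fan))$ has row sum one because each step of a weight-zero oscillating tableau pairs uniquely with another step in its chord diagram, so across the $r$ rows of a block-row we obtain $r$ ones in total. The diagonal block $B^{(r)}_{i,i}$ vanishes because, by Definition~\ref{definition.virtual_spintovector}, the $r$ substeps produced by $\Psi$ within one fan step add boxes in rows indexed by $S^c$ and remove boxes in rows indexed by $S$, and since these sets are disjoint, no $+$ and $-$ substeps within the same block can be matched. This accounts for exactly $r$ ones in the off-diagonal part of the block-row.

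The south-east chain property is the core technical point. My plan is to identify $\Mosc(\fantoosc(\fan))$ with Krattenthaler's south-east blow-up $\blowupSE_r(\Mfan(\fan))$; by Remark~\ref{remark:blowup-unique}, this amounts to checking the blocksum identity together with the south-east chain property. The blocksum identity follows from the comparison of filling rules: at each fan cell the number of negative entries in $\kappa+\nu-\lambda$ equals the total number of cells in the associated oscillating block where the local rule produces a negative entry. For the chain property itself, within a single block the decreasing sequence $v_r>v_{r-1}>\cdots>v_1$ from $\Psi$ combined with iterated applications of Lenart's rule forces any negative entries to occur at strictly increasing row and column coordinates. Across consecutive blocks $B^{(r)}_{i,J-1}$ and $B^{(r)}_{i,J}$, the shared border labels come from $\Psi$ applied to a spin element of the promoted fan, and an induction on $J$ propagates the monotonicity. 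The main obstacle I anticipate is this cross-block compatibility, which requires careful tracking of how the dominant corrections on one border influence the filling in the next block; control of this propagation via the virtualization structure should complete the argument, and the column case follows by symmetry of the adjacency matrix.
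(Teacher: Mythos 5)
Your counting step is sound and agrees with facts the paper uses: each row and column of $\Mosc(\fantoosc(\fan))$ contains exactly one $1$ (a consequence of Theorem~\ref{thm:osc-main} and the weight-zero condition), so each block-row carries $r$ ones, and the diagonal blocks vanish (Remark~\ref{remark:diagonal zero}). The problem is the south-east chain property itself, which is the entire content of the lemma, and there your proposal has a genuine gap. You propose to obtain it by identifying $\Mosc(\fantoosc(\fan))$ with $\blowupSE_r(\Mfan(\fan))$, but by Remark~\ref{remark:blowup-unique} that identification is \emph{equivalent to} the blocksum identity together with the chain property — it cannot be used to prove the chain property, and indeed in the paper the identification is deduced from this lemma (via Proposition~\ref{proposition.promotion fillings}), not the other way around. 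Your fallback is a direct local analysis: within one block the containment-switch argument does force a south-east chain (this is essentially the within-block analysis in the proof of Proposition~\ref{proposition.promotion fillings}), but the compatibility \emph{across} consecutive blocks in a block-row is exactly the hard global statement, and you only assert that an induction on $J$ ``should'' propagate the monotonicity. That step is not routine: the position of the single $1$ in each row of the oscillating promotion matrix is a global feature of the matching, not something controlled cell-by-cell by Lenart's rule, so as written the argument does not close.

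The paper avoids this difficulty with two ideas absent from your proposal. First, by Lemma~\ref{lemma:fan_osc_promotion_equivalence} and Propositions~\ref{proposition:rotation_via_toroidal} and~\ref{proposition:rotation_promotion}, fan-promotion acts on $\Mosc(\fantoosc(\fan))$ as the toroidal shift by $r$, so it suffices to verify the chain property for a single block-row and block-column, namely the one lying along the boundary of the triangular growth diagram (the bottommost $r$ rows and leftmost $r$ columns). Second, because the first and last steps of an $r$-fan are $\emptyset\to(1^r)$ and $(1^r)\to\emptyset$, the diagonal corners of the growth diagram at positions $r$ and $r(n-1)$ are labelled by the column $(1^r)$; since $\Mosc=\Gosc$ by Theorem~\ref{thm:osc-main}, Krattenthaler's Greene-type theorem \cite[Theorem~2]{Krattenthaler.2006} applied to that corner label says precisely that the region contains $r$ ones whose longest north-east chain has length one, i.e.\ they form a south-east chain. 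If you want to salvage your direct approach, you would need to supply the cross-block monotonicity argument in full; otherwise the rotation-plus-Greene reduction is the efficient route.
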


\begin{proof}
By the definition of oscillating tableaux and the local rules for promotion, $\Mosc$ is a zero-one matrix. From 
Lemma~\ref{lemma:fan_osc_promotion_equivalence}, Proposition~\ref{proposition:rotation_via_toroidal}, and
Proposition~\ref{proposition:rotation_promotion}, it suffices to prove that the nonzero entries in
$[B^{(r)}_{n,n+1} , B^{(r)}_{n,n+2}, \ldots, B^{(r)}_{n,2n-1}]$
and $[B^{(r)}_{2,1} , B^{(r)}_{3,1}, \ldots, B^{(r)}_{n,1}]^{T}$ form a south-east chain. Recall that by construction, the Fomin growth diagram of
$\fantoosc(\fan)$ is a triangle diagram with the entries of $\fantoosc(\fan)$ labelling its diagonal. As $\fan$ is an $r$-fan of
Dyck paths, the partition $(1^{r})$  sits at the corners $(r, r(n-1))$ and $(r(n-1), r)$ in the Fomin growth diagram of $\fantoosc(\fan)$.
By Theorem~\ref{thm:osc-main}, we have $\Mosc(\fantoosc (\fan)) =\Gosc(\fantoosc (\fan))$. This implies that the filling of the leftmost $r$
columns and bottommost $r$ rows match $\Mosc(\fantoosc (\fan))$. As all the entries of $\Mosc(\fantoosc (\fan))$ are either $0$ or $1$,
we have by ~\cite[Theorem 2]{Krattenthaler.2006} that there are exactly $r$ $1$'s forming a south-east chain in the leftmost $r$ columns and 
in the bottommost $r$ rows.
\end{proof}

\begin{remark}
\label{remark:diagonal zero}
The proof of Lemma~\ref{lemma.blocksum se} implies that the diagonal block matrices $B^{(r)}_{i,i}$ of $\Mosc(\fantoosc \fan)$ 
are all zero matrices.
\end{remark}

\begin{proposition}
\label{proposition.promotion fillings}
Let $\fan$ be an $r$-fan of Dyck paths of length $n$. Then
\[
	\Mfan(\fan) = \mathsf{blocksum}_{r}(\Mosc(\fantoosc(\fan))).
\]
Moreover,
\[
\blowupSE_r(\Mfan(\mathcal{F})) = \Mosc(\fantoosc(\mathcal{F})).
\]
\end{proposition}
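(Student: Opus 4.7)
The plan is to first establish the block-sum equality, and then derive the blow-up equality by invoking the uniqueness characterization in Remark~\ref{remark:blowup-unique}. Throughout, I would fix an $r$-fan $\fan=(\emptyset=\lambda^0,\lambda^1,\dots,\lambda^n=\emptyset)$ and its image $\fantoosc(\fan)=(\emptyset=\mu^0,\mu^1,\dots,\mu^{rn}=\emptyset)$, noting that by construction $\mu^{tr}=\lambda^t$.

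First I would set up the compatibility of the two promotion matrices. By Lemma~\ref{lemma:fan_osc_promotion_equivalence}, iterating promotion $n$ times on $\fan$ corresponds, under $\fantoosc$, to iterating promotion $rn$ times on $\fantoosc(\fan)$. Combined with the identity $\mu^{tr}=\lambda^t$, this forces the $rn\times rn$ promotion--evacuation grid of $\fantoosc(\fan)$ to organize into $n\times n$ blocks of size $r\times r$ whose four corner labels reproduce exactly the four corner labels of the corresponding cells of the $n\times n$ promotion matrix of $\fan$. In particular, a single $r\times r$ block is bordered by fan corners $\lambda,\kappa,\nu,\mu$ with $\mu=\dom_W(\kappa+\nu-\lambda)$ under the hyperoctahedral Weyl group $\fH_r$.

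Next I would evaluate the sum of oscillating-tableau fillings inside one such block. The key local claim is
\[
\sum_{\text{cells of the block}} \filling_O \;=\; \#\bigl\{\text{negative entries of } \kappa+\nu-\lambda\bigr\} \;=\; \filling(\lambda,\kappa,\nu,\mu),
\]
where the right-hand side is the filling rule \eqref{eq:fillingfans} for fans. To prove this, I would track the intermediate labels produced by the local rule inside the block, exploiting the explicit form of the virtualization $\Psi$ of Definition~\ref{definition.virtual_spintovector}: each horizontal or vertical fan step decomposes as a tensor product $v_r\otimes v_{r-1}\otimes\dots\otimes v_1$ of letters of $\mathcal{C}_\square$ in which all barred letters sit to the left of all unbarred letters. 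Because the local rule $\mu'=\dom_W(\kappa'+\nu'-\lambda')$ flags a cell with a $1$ exactly when a single sign flip occurs at that step, the ordering imposed by $\Psi$ ensures that each negative component of $\kappa+\nu-\lambda$ triggers exactly one $1$-filled cell inside the block, with no double counting. Summing block by block then yields
\[
\mathsf{blocksum}_r(\Mosc(\fantoosc(\fan)))=\Mfan(\fan),
\]
which is the first stated equality.

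Finally, for the blow-up equality, I would appeal to Remark~\ref{remark:blowup-unique}: $\blowupSE_r(\Mfan(\fan))$ is characterized as the unique $rn\times rn$ zero-one matrix whose $\mathsf{blocksum}_r$ equals $\Mfan(\fan)$ and whose nonzero entries, read off in each off-diagonal strip $[B^{(r)}_{i,i+1},\dots,B^{(r)}_{i,i+n-1}]$ and $[B^{(r)}_{i+1,i},\dots,B^{(r)}_{i+n-1,i}]$, form a south-east chain. The matrix $\Mosc(\fantoosc(\fan))$ is zero-one by the filling rule \eqref{eq:fillingosc}, has the correct block sum by the first equality, has vanishing diagonal blocks by Remark~\ref{remark:diagonal zero}, and satisfies the south-east chain property by Lemma~\ref{lemma.blocksum se}. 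Uniqueness then forces $\blowupSE_r(\Mfan(\fan))=\Mosc(\fantoosc(\fan))$.

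The main obstacle is the per-block counting in the second step: identifying precisely which cells inside an $r\times r$ block pick up a $1$ under the local rule, and matching them bijectively with the negative coordinates of $\kappa+\nu-\lambda$. The bookkeeping is manageable only because the virtualization $\Psi$ places all barred entries to the left of unbarred entries, which aligns the sign flips with the columns of the block in a predictable way; without this ordering the count could fail.
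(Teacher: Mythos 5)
Your overall architecture matches the paper's proof: decompose $\Mosc(\fantoosc(\fan))$ into $r\times r$ blocks whose corner labels are the corner labels of the promotion matrix of $\fan$, show that the sum of the $0/1$ oscillating fillings over a block equals the fan filling of the corresponding cell, and then obtain the blow-up identity from the uniqueness statement in Remark~\ref{remark:blowup-unique} together with the south-east chain property of Lemma~\ref{lemma.blocksum se}. The second half of your argument is complete and correct. The problem is that the first half rests entirely on your ``key local claim,'' and the justification you offer --- that ``each negative component of $\kappa+\nu-\lambda$ triggers exactly one $1$-filled cell inside the block, with no double counting'' --- is a restatement of what must be proved rather than an argument. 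You flag this as the main obstacle yourself, but you do not resolve it, and essentially all of the content of the proposition lives there.

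Concretely, the paper closes this gap in three steps. First, it computes the fan filling $a_{i,j}$ directly: writing $\nu-\lambda$ as $k$ up-steps $\mathbf{e}_{i_1},\dots,\mathbf{e}_{i_k}$ followed by $r-k$ down-steps, and $\mu-\kappa$ as $m$ up-steps followed by $r-m$ down-steps (this unimodal ordering is exactly what the virtualization $\Psi$ buys), the action of $\dom_{\fH_r}$ shows that the number of negative entries of $\kappa+\nu-\lambda$ is $m-k$. Second, for an individual cell of the block with corners $\lambda',\nu',\kappa',\mu'$, a short case analysis of $\mu'=\dom_{\fH_r}(\kappa'+\nu'-\lambda')$ shows that the configuration $\lambda'\subset\nu'$ with $\kappa'\supset\mu'$ is impossible, and that the cell carries a $1$ if and only if $\lambda'\supset\nu'$ and $\kappa'\subset\mu'$. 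Third --- and this is the step entirely absent from your sketch --- since each row and column of $\Mosc(\fantoosc(\fan))$ contains exactly one $1$, the position where the boundary label sequence of a row of the block switches from increasing to decreasing either stays put or moves one step to the right as you pass to the next row; hence the switch position drifts from $k$ on the top edge of the block to $m$ on the bottom edge, and the block contains exactly $m-k$ ones. Without some version of this propagation argument, your claimed bijection between negative coordinates of $\kappa+\nu-\lambda$ and the $1$-cells of the block is unsupported.
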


\begin{proof}

By Remark~\ref{remark:diagonal zero}, the diagonal entries of $\Mfan(\fan)$ and $\mathsf{blocksum}_{r}(\Mosc(\fantoosc(\fan)))$ are all zero. 
Let $a_{i,j}$ with $i\neq j$ be the entry in $\Mfan(\fan)$ that is the filling of the cell labelled by $\begin{tikzpicture}[scale=0.5]
\draw (0,0) rectangle (1.5,1.5);
\node at (0,1.5)[anchor=south west]{$\lambda$};
\node at (1.5,1.5)[anchor=south west]{$\nu$};
\node at (0,0)[anchor=south west]{$\kappa$};
\node at (1.5,0)[anchor=south west]{$\mu$};
\node at (0,)[anchor=south east]{};
\end{tikzpicture}$ 
in the promotion matrix of $\fan$. To show that the number of $1$'s appearing in $B^{(r)}_{i,j}$ of $ \Mosc(\fantoosc(\fan))$ is also 
equal to $a_{i,j}$, we first compute $a_{i,j}$ for $i\neq j$. By Definition~\ref{eq:fillingfans}, $a_{i,j}$ is the number of negative entries in 
$\kappa + \nu - \lambda$. Since $\lambda, \nu$ and $\kappa, \mu$ are consecutive partitions in an $r$-fan of Dyck paths, we know that they 
differ by a vector of the form $(\pm 1, \ldots, \pm 1)$. We may write $\nu - \lambda$ and $\mu - \kappa$ as
\[
\begin{split}
	\nu - \lambda &= \mathbf{e}_{i_1} + \cdots +  \mathbf{e}_{i_k} -  \mathbf{e}_{i_{k+1}} - \cdots -  \mathbf{e}_{i_r},\\
	\mu - \kappa &=  \mathbf{e}_{j_1} + \cdots +  \mathbf{e}_{j_m} -  \mathbf{e}_{j_{m+1}} - \cdots -  \mathbf{e}_{j_r},
\end{split}
\]
where
\[
\begin{split}
 &\{i_1, \ldots, i_r\} = [r] = \{j_1, \ldots, j_r\},\\
 &i_1 < \cdots < i_k \text{ and } i_{k+1} > \cdots > i_r,\\
 & j_1 < \cdots < j_m \text{ and } j_{m+1} > \cdots > j_r.
\end{split}
\]
 By the definition of $\mu$ from the local rules of Lenart~\cite{Lenart.2008} (see Section~\ref{section.promotion local rules}), we have
\begin{align*}
	\mu &= \text{dom}_{\fH_r}(\kappa + \nu - \lambda) \\
	&= \text{dom}_{\fH_r}(\kappa +  \mathbf{e}_{i_1} + \cdots +  \mathbf{e}_{i_k} -  \mathbf{e}_{i_{k+1}} - \cdots -  \mathbf{e}_{i_r}).
\end{align*}
Recall that $\text{dom}_{\fH_r}$ applied to a weight sorts the absolute values of the entries of the weight into weakly decreasing order. In particular, 
$ \text{dom}_{\fH_r}(\kappa +  \mathbf{e}_{i_1} + \cdots +  \mathbf{e}_{i_k} -  \mathbf{e}_{i_{k+1}} - \cdots -  \mathbf{e}_{i_r})$ will change all of the 
$-1$ entries of $\kappa +  \mathbf{e}_{i_1} + \cdots +  \mathbf{e}_{i_k} -  \mathbf{e}_{i_{k+1}} - \cdots -  \mathbf{e}_{i_r}$ to $+1$ and then sort all 
entries into weakly decreasing order (note that sorting will not change the number of cells). We thus have two equations for $\mu$:
\begin{align*}
	\mu &=  \text{dom}_{\fH_r}(\kappa +  \mathbf{e}_{i_1} + \cdots +  \mathbf{e}_{i_k} -  \mathbf{e}_{i_{k+1}} - \cdots -  \mathbf{e}_{i_r}) \\
	&= \kappa +  \mathbf{e}_{j_1} + \cdots +  \mathbf{e}_{j_m} -  \mathbf{e}_{j_{m+1}} - \cdots -  \mathbf{e}_{j_r}.
\end{align*}
Therefore, $\text{dom}_{\fH_r}$ changed $m-k$ negative entries in $\kappa +\nu - \lambda$ to $+1$ in $\mu$, showing that $a_{i,j} = m-k$.

From the virtualization given in Definition~\ref{definition.virtual_spintovector}, the partitions labelling the top of the first row of cells in 
$B^{(r)}_{i,j}$ are $\lambda, \lambda^{(1)}, \ldots, \lambda^{(r-1)}, \nu$, where $\lambda^{(\ell)} = \lambda + \mathbf{e}_{i_1} + \cdots \pm \mathbf{e}_{i_\ell}$. 
Similarly, the partitions labelling the bottom of the $r$-th row of cells in $B^{(r)}_{i,j}$ are $\kappa, \kappa^{(1)}, \ldots, \kappa^{(r-1)}, \mu$, 
where $\kappa^{(\ell)} = \kappa + \mathbf{e}_{j_1} + \cdots \pm \mathbf{e}_{j_\ell}$. In particular, we have 
\[
\begin{split}
	&\lambda \subset \lambda^{(1)} \subset \cdots \subset \lambda^{(k-1)} \subset \lambda^{(k)} \supset \lambda^{(k+1)} \supset \cdots 
	\supset \lambda^{(r-1)} \supset \nu,\\
	&\kappa \subset \kappa^{(1)} \subset \cdots \subset \kappa^{(m-1)} \subset \kappa^{(m)} \supset \kappa^{(m+1)} \supset \cdots \supset \kappa^{(r-1)} 
	\supset \mu.
\end{split}
\]
Let $\begin{tikzpicture}[scale=0.5]
\draw (0,0) rectangle (1.5,1.5);
\node at (0,1.5)[anchor=south west]{$\lambda'$};
\node at (1.5,1.5)[anchor=south west]{$\nu'$};
\node at (0,0)[anchor=south west]{$\kappa'$};
\node at (1.5,0)[anchor=south west]{$\mu'$};
\node at (0,)[anchor=south east]{};
\end{tikzpicture}$ 
label a cell in the first row of  $B^{(r)}_{i,j}$, and note that the pairs $\lambda', \nu'$ and $\kappa', \mu'$ differ by a unit vector since they are 
adjacent partitions in an oscillating tableau. It is impossible for  the inclusions $\begin{tikzpicture}[scale=0.5]
\draw (0,0) rectangle (1.5,1.5);
\node at (0,1.5)[anchor=south west]{$\lambda' \subset $};
\node at (1.5,1.5)[anchor=south west]{$\nu'$};
\node at (0,0)[anchor=south west]{$\kappa' \supset$};
\node at (1.5,0)[anchor=south west]{$\mu'$};
\node at (0,)[anchor=south east]{};
\end{tikzpicture}$ since  $\lambda' \subset \nu'$ implies $\kappa' + \nu' - \lambda' = \kappa' +  \mathbf{e}_i$ for some $i$, and by definition $\mu' =  \text{dom}_{\fH_r}(\kappa' +  \mathbf{e}_i) = \kappa' + \mathbf{e}_i$ which contradicts $\mu' \subset \kappa'$. When $\begin{tikzpicture}[scale=0.5]
\draw (0,0) rectangle (1.5,1.5);
\node at (0,1.5)[anchor=south west]{$\lambda' \supset$};
\node at (1.5,1.5)[anchor=south west]{$\nu'$};
\node at (0,0)[anchor=south west]{$\kappa' \subset$};
\node at (1.5,0)[anchor=south west]{$\mu'$};
\node at (0,)[anchor=south east]{};
\end{tikzpicture}$ occurs, we know that $\kappa' + \nu' - \lambda' = \kappa' -  \mathbf{e}_i$ for some $i$ since $\nu' \subset \lambda'$. Since 
$\kappa' \subset \mu' = \text{dom}_{\fH_r}(\kappa' -  \mathbf{e}_i)$, it must be that $\mu' = \kappa' +  \mathbf{e}_i$ and therefore $\kappa' -  \mathbf{e}_i$ 
contained a negative entry. Therefore, when $\lambda' \supset \nu'$ and $\kappa' \subset \mu'$ there is a $1$ filling the cell. Conversely, when there is a 
$1$ filling a cell labelled  $\begin{tikzpicture}[scale=0.5]
\draw (0,0) rectangle (1.5,1.5);
\node at (0,1.5)[anchor=south west]{$\lambda'$};
\node at (1.5,1.5)[anchor=south west]{$\nu'$};
\node at (0,0)[anchor=south west]{$\kappa'$};
\node at (1.5,0)[anchor=south west]{$\mu'$};
\node at (0,)[anchor=south east]{};
\end{tikzpicture}$, 
then there is a negative in $\kappa' + \nu' - \lambda' = \kappa' \pm  \mathbf{e}_i$ for some $i$, which is only possible when 
$\kappa' + \nu' - \lambda' = \kappa' -  \mathbf{e}_i$. As a result, $\kappa' \subset \mu'$ and $\lambda' \supset \nu'$.

By Theorem~\ref{thm:osc-main}, each row and each column in $\Mosc(\fantoosc(\fan))$ contains exactly one $1$. Therefore there is at most 
one cell in the first row of $B^{(r)}_{i,j}$ where the containment between the top and bottom pairs of partitions is flipped.
By the cases described above, containment between pairs of partitions labelling the bottom of the first row of cells in $B^{(r)}_{i,j}$ either exactly 
matches the containment between pairs of partitions labelling the top of the first row or the switch in containment in the bottom occurs immediately 
to the right of the switch in containment in the top. The same outcome is observed recursively in the remaining rows of cells in $B^{(r)}_{i,j}$. Since 
we already knew the labels of the bottom of the $r$-th row to be increasing up to $\kappa^{(m)}$, we conclude that the number of $1$'s appearing 
in $B^{(r)}_{i,j}$ is equal to $m-k$, which we showed above is equal to $a_{i,j}$. Therefore, $\Mfan(\fan) = \mathsf{blocksum}_{r}(\Mosc(\fantoosc(\fan)))$. 
Further, since the $1$'s in $\Mosc(\fantoosc(\fan))$ form a south-east chain, by Remark~\ref{remark:blowup-unique} we have 
$\blowupSE_r(\Mfan(\fan)) = \Mosc(\fantoosc(\fan))$.
\end{proof}

We can now prove Theorem~\ref{thm:fans-main}.
\begin{proof}
Let $\fan = (\mu^{0}, \ldots, \mu^{n})$ be an $r$-fan of Dyck paths of length $n$. We have
\begin{align*}
	\Mfan(\fan) &=  \mathsf{blocksum}_{r}(\Mosc(\fantoosc(\fan))) \qquad &&\text{ by Proposition~\ref{proposition.promotion fillings}} \\
 	&= \mathsf{blocksum}_{r}(\Gosc(\fantoosc(\fan))) &&\text{ by Theorem~\ref{thm:osc-main}.}
\end{align*}
It remains to show that $\mathsf{blocksum}_{r}(\Gosc(\fantoosc(\fan))) = \Gfan(\fan)$. The diagonal entries of 
$\mathsf{blocksum}_{r}(\Gosc(\fantoosc(\fan)))$ and $\Gfan(\fan)$ are all zero by Remark ~\ref{remark:diagonal zero} and by definition of $\Gfan$ 
respectively. As $\Gosc$ and $\Gfan$ are symmetric matrices, it suffices to show that the lower triangular entries of  
$\mathsf{blocksum}_{r}(\Gosc(\fantoosc(\fan)))$ and $\Gfan(\fan)$ agree. Let $G$ denote the triangular growth diagram associated with $\fantoosc(\fan)$. 
By the definition of $\fantoosc$ and Construction~\ref{construction.growthdiagram}, the coordinate $(kr, (n-k)r)$ is labelled with partition $\mu^{k}$ 
for $0 \leqslant k \leqslant n$. As $G$ has a $0/1$ filling, the local rules guarantee that the partition $\nu^{k}$ labelling the coordinate $(kr, (n-k-1)r)$ 
of $G$ is contained within the partition $\mu^{k} \cap \mu^{k+1}$ for $0 \leqslant k \leqslant n-1$. Moreover, $\lvert \mu^{k}/\nu^{k} \rvert 
+ \lvert \mu^{k+1}/\nu^{k} \rvert$ is equal to the total number of $1$'s lying in either a column from $kr+1$ to $(k+1)r$ or in a row from 
$(n-k-1)r+1$ to $(n-k)r$. From Lemma~\ref{lemma.blocksum se} and the fact that $\Gosc$ is symmetric, there exist exactly $r$ such 
$1$'s which implies $\lvert \mu^{k}/\nu^{k} \rvert + \lvert \mu^{k+1}/\nu^{k} \rvert = r$. Since $\mu^{k}$ and $\mu^{k+1}$ differ by exactly $k$ 
boxes, $\nu^{k} = \mu^{k}\cap \mu^{k+1}$ for all $0 \leqslant k \leqslant n-1$. 

Let $H$ denote the triangular growth diagram with filling given by the lower triangular entries of $\mathsf{blocksum}_{r}(\Gosc(\fantoosc(\fan)))$ 
and local rules given by the Burge rules. From Lemma~\ref{lemma.blocksum se}, $\blowupSE(\mathsf{blocksum}_{r}(\Gosc(\fantoosc(\fan)))) = 
\Gosc(\fantoosc(\fan))$. A result by Krattenthaler \cite{Krattenthaler.2006} implies that the labellings of the hypotenuse of $H$ are given by 
$(\mu^{0}, \nu^{0}, \mu^{1}, \ldots, \nu^{n-1}, \mu^{n})$. As the Burge rules are injective and the growth diagram associated to $\fan$ under 
Construction~\ref{construction.Burgegrowthrule} has hypotenuse labelled by $(\mu^{0}, \mu^{0}\cap \mu^{1} , \mu^{1}, \ldots, 
\mu^{n-1}\cap \mu^{n}, \mu^{n})$, the lower triangular entries of  $\mathsf{blocksum}_{r}(\Gosc(\fantoosc(\fan)))$ and $\Gfan(\fan)$ are equal.
\end{proof}

%%%%%%%%%%%%%%%%%%%%%%%%%%%%%%%%%%%%%%%%%%%%%%%%%%%%%%%
\subsection{Results for vacillating tableaux}

We state our main results.

\begin{theorem}
\label{thm:vac-main}
For a vacillating tableau $\vac$
\[
	\Gvac(\vac) = \Mvactoosc(\vac) = \Mvactofan(\vac).
\]
In other words, the filling of the growth diagram (see Construction~\ref{construction.RSKgrowthrule}), the filling of the promotion matrix 
$\Mvactoosc(\vac)$, and the filling of the promotion matrix $\Mvactofan(\vac)$ coincide.
\end{theorem}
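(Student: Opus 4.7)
The plan is to reduce the statement to Theorems~\ref{thm:osc-main} and~\ref{thm:fans-main} by combining virtualization of promotion with Krattenthaler's blow-up/blocksum compatibility, mirroring the strategy that proves Theorem~\ref{thm:fans-main} from Theorem~\ref{thm:osc-main}. The proof will proceed in three stages, and I expect the bulk of the work to lie in the third one.

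First I would establish a promotion compatibility lemma analogous to Lemma~\ref{lemma:fan_osc_promotion_equivalence}. Using the virtual crystal of Definition~\ref{definition.V vector} together with the fact that Lusztig's involution, and hence the crystal commutor, commute with virtualization (as in the proof of Lemma~\ref{lemma:fan_osc_promotion_equivalence}), I would show
\begin{equation*}
\vactoosc \circ \mathsf{pr}_{\mathcal{B}_\square} = \mathsf{pr}^{2}_{\mathcal{C}_\square} \circ \vactoosc,
\qquad
\vactofan \circ \mathsf{pr}_{\mathcal{B}_\square} = \mathsf{pr}^{2}_{\mathcal{B}_{\mathsf{spin}}} \circ \vactofan,
\end{equation*}
with the extra bookkeeping required because $\mathcal{B}_\square$ is not minuscule handled by the two-step scheme~\eqref{eq:scheme_vac_pr}; the partitions on the diagonal and the first subdiagonal of that scheme are, by definition, precisely the entries of $\vactoosc(\vac)$.

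Second, I would use the definitions of $\Mvactoosc$ and $\Mvactofan$, together with Theorems~\ref{thm:osc-main} and~\ref{thm:fans-main}, to rewrite
\begin{equation*}
\Mvactoosc(\vac) = \mathsf{blocksum}_{2}(\Mosc(\vactoosc(\vac))) = \mathsf{blocksum}_{2}(\Gosc(\vactoosc(\vac))),
\end{equation*}
\begin{equation*}
\Mvactofan(\vac) = \mathsf{blocksum}_{2}(\Mfan(\vactofan(\vac))) = \mathsf{blocksum}_{2}(\Gfan(\vactofan(\vac))).
\end{equation*}
This reduces the theorem to the two growth-diagram identities
\begin{equation*}
\Gvac(\vac) = \mathsf{blocksum}_{2}(\Gosc(\vactoosc(\vac))) = \mathsf{blocksum}_{2}(\Gfan(\vactofan(\vac))).
\end{equation*}

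For the third stage I would argue that the RSK blow-up $\blowupNE_{2}$ applied to the vacillating growth diagram reproduces the oscillating growth diagram. Concretely, the diagonal of $\Gvac(\vac)$ carries the doubled partitions $2\mu^{i}$ (Construction~\ref{construction.RSKgrowthrule}), which match the positions $(2i,2(n-i))$ on the diagonal of $\Gosc(\vactoosc(\vac))$ because $\mu^{2i} = 2\lambda^{i}$ in Definition~\ref{definition.vactoosc}. The subdiagonal labels prescribed by Construction~\ref{construction.RSKgrowthrule} (namely $2(\mu^{i}\cap\mu^{i+1})$ in the generic case and $2\mu^{i}-\mathbf{e}_{r}$ in the equal case) coincide exactly with the intermediate partitions $\mu^{2i-1}$ that appear in Definition~\ref{definition.vactoosc}, after a short check using that adjacent partitions in an oscillating tableau differ by a single box. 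Once the hypotenuse labels match, Krattenthaler's theorem on blow-up/blocksum duality for the RSK rules~\cite[Theorem 7]{Krattenthaler.2006}, combined with the uniqueness clause of Remark~\ref{remark:blowup-unique} and the fact that the $1$'s in $\Gosc(\vactoosc(\vac))$ form north-east chains in each row-strip and column-strip of size $2$ (an oscillating-tableau analogue of Lemma~\ref{lemma.blocksum se}), forces $\blowupNE_{2}(\Gvac(\vac)) = \Gosc(\vactoosc(\vac))$ and hence $\Gvac(\vac) = \mathsf{blocksum}_{2}(\Gosc(\vactoosc(\vac)))$. The identity $\Gvac(\vac) = \mathsf{blocksum}_{2}(\Gfan(\vactofan(\vac)))$ follows by the parallel Burge/$\blowupSE_{2}$ argument, using the commuting square of embeddings $\fantoosc \circ \vactofan = \vactoosc$ that is summarized in Figure~\ref{figure.virtual vect and spin}.

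The main obstacle I anticipate is verifying the subdiagonal matching in the exceptional case $\lambda^{i-1} = \lambda^{i}$, where a box is removed from the last row of $2\mu^{i}$ on the vacillating side and $\mu^{2i-1} = 2\lambda^{i} - \mathbf{e}_{r}$ appears on the oscillating side; one must check that Definition~\ref{definition.Psi vector} (sending $0$ to $r\otimes\overline{r}$) produces precisely the right $2\times 2$ pattern of $1$'s in the blown-up diagram so that the RSK growth rules around this block yield the prescribed subdiagonal label. Provided this local verification (together with its analogue for $\blowupSE_{2}$ and the Burge rules on fans) goes through, the rest of the proof is a routine bookkeeping exercise.
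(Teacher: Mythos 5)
Your overall architecture is the same as the paper's: reduce everything to Theorems~\ref{thm:osc-main} and~\ref{thm:fans-main} by showing that $\Mvactoosc$ and $\Mvactofan$ are $\mathsf{blocksum}_2$ of the corresponding oscillating/fan promotion matrices, and that $\Gvac$ is $\mathsf{blocksum}_2$ of the oscillating growth diagram (via the north-east chain structure and Krattenthaler's blow-up duality). However, there is a concrete error on the fan side. The identity $\Mvactofan(\vac) = \mathsf{blocksum}_{2}(\Mfan(\vactofan(\vac)))$ is false for $r\geqslant 2$: since $\Mvactofan$ computes promotion of the vacillating tableau via the two-row scheme~\eqref{eq:scheme_vac_pr} (one application of $\mathsf{pr}_{\mathcal{B}_\square}$ per row, so the diagonal $2\times 2$ blocks contribute nothing), while $\mathsf{blocksum}_2\circ\Mfan\circ\vactofan$ applies $\mathsf{pr}_{\mathcal{B}_{\mathsf{spin}}}$ one step at a time, the two corner cells of each diagonal block acquire the filling $r-1$ under rule~\eqref{eq:fillingfans}: the cell with corners $\lambda=\mathbf{1}$, $\kappa=\emptyset$, $\nu=2\mathbf{e}_1$ has $\kappa+\nu-\lambda=(1,-1,\dots,-1)$ with $r-1$ negative entries. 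The correct statement (the paper's Lemma~\ref{lem:blocksumVac}(ii)) is $\Mvactofan+2(r-1)E = \mathsf{blocksum}_{2}\circ\Mfan\circ\vactofan$. Correspondingly, your reduction target $\Gvac(\vac)=\mathsf{blocksum}_{2}(\Gfan(\vactofan(\vac)))$ is also false: by the paper's Lemma~\ref{lemma.S diagonal} one has $\Gfan\circ\vactofan=\Gosc\circ\vactoosc+(r-1)S$, so $\mathsf{blocksum}_2(\Gfan(\vactofan(\vac)))$ carries $2(r-1)$ on its diagonal while $\Gvac$ has zero diagonal by construction. The two omissions would cancel if both were tracked, which is exactly how the paper closes the argument, but as written two of your displayed identities fail and nothing in your proposal signals awareness of this diagonal discrepancy.

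A secondary inaccuracy: the subdiagonal labels of Construction~\ref{construction.RSKgrowthrule} do \emph{not} ``coincide exactly'' with the $\mu^{2i-1}$ of Definition~\ref{definition.vactoosc} in the generic case, since $2(\lambda^{i-1}\cap\lambda^{i})$ and $\lambda^{i-1}+\lambda^{i}$ differ by one box when $\lambda^{i-1}\neq\lambda^{i}$; moreover $\mu^{2i-1}$ sits at an odd hypotenuse position that is discarded by the shrink-back, so it is not the partition you need to match. What must actually be verified is that the even-indexed corner one step inside the hypotenuse of $\Gosc(\vactoosc(\vac))$ (the label $\alpha_{2i,2i+1}$ in the paper's notation, computed via the backward rules as in the proof of Lemma~\ref{lemma.S diagonal}) agrees with the prescribed subdiagonal label of Construction~\ref{construction.RSKgrowthrule}. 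This is a short case analysis ($\lambda^{i-1}=\lambda^i$, $\subset$, $\supset$), but it is genuine work that your sketch does not correctly locate.
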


In particular we obtain the corollary:
\begin{corollary}
The maps $\Mvactoosc$ and $\Mvactofan$ are injective.
\end{corollary}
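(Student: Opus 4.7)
My plan is to deduce the corollary from Theorem~\ref{thm:vac-main} by reducing both injectivity statements to the injectivity of the single map $\Gvac$. Since Theorem~\ref{thm:vac-main} asserts $\Gvac(\vac)=\Mvactoosc(\vac)=\Mvactofan(\vac)$ for every vacillating tableau of weight zero, an equality $\Mvactoosc(\vac_1)=\Mvactoosc(\vac_2)$ (and analogously for $\Mvactofan$) immediately forces $\Gvac(\vac_1)=\Gvac(\vac_2)$, so it suffices to exhibit a procedure that reconstructs $\vac$ from the matrix $\Gvac(\vac)$.

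To do this I would invoke the standard Fomin--Krattenthaler reversibility of the triangular growth construction. Given $M:=\Gvac(\vac)$, its lower triangular entries (the diagonal is zero by definition) provide the filling of the triangular Ferrers shape $(n-1,n-2,\dots,1,0)$ used in Construction~\ref{construction.RSKgrowthrule}. I would label the entire left and bottom borders of this shape with $\emptyset$ and propagate northeast via the forward RSK rules F0--F2. Because F0--F2 and B0--B2 are inverse bijections on cells (Section~\ref{section.Fomin RSK}), this forward sweep reproduces every corner label that the backward construction originally produced, and in particular the diagonal labels $2\mu^0,2\mu^1,\dots,2\mu^n$ along the hypotenuse. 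Halving these partitions then reads off $\vac=(\mu^0,\mu^1,\dots,\mu^n)$, and injectivity follows.

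The one point requiring verification, and the main obstacle in the argument, is that the empty-border boundary condition is in fact what Construction~\ref{construction.RSKgrowthrule} produces internally: the construction only prescribes labels along the hypotenuse and its first subdiagonal, and we must know that the backward rules deliver $\emptyset$ all along the left and bottom edges. Since $\vac$ has weight zero, $2\mu^0=2\mu^n=\emptyset$ at the two endpoints of the diagonal, and the backward RSK rules can only shrink partitions along arrows (one always has $\gamma\subset\delta$ and $\gamma\subset\alpha$), so the labels must propagate down to $\emptyset$ along both borders. The mild subtlety is the special subdiagonal label when $\mu^i=\mu^{i+1}$ (a partition obtained by removing a cell from the last row of $2\mu^i$), but it is still contained in both adjacent diagonal labels $2\mu^i$ and $2\mu^{i+1}$, so the shrinking argument goes through unchanged. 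This compatibility check is essentially the only technical step beyond invoking Theorem~\ref{thm:vac-main}, and with it in hand the injectivity of $\Mvactoosc$ and $\Mvactofan$ is immediate.
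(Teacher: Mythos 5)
Your proposal is correct and matches the paper's (implicit) argument: the paper derives the corollary directly from Theorem~\ref{thm:vac-main} together with the reversibility of the growth-diagram construction stated in Construction~\ref{construction.RSKgrowthrule}, which is exactly the reduction you carry out. Your verification that the backward RSK rules force empty partitions along the left and bottom borders (since $\gamma\subset\alpha$ and $\gamma\subset\delta$ and the endpoints of the hypotenuse are $\emptyset$ by the weight-zero hypothesis) is a correct filling-in of a step the paper leaves unstated.
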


We will first prove the second equality in Theorem~\ref{thm:vac-main}. To do so, we need the following lemma.

\begin{lemma}\label{lem:blocksumVac}
We have the following:
\begin{enumerate}[(i)]
\item $\Mvactoosc = \mathsf{blocksum}_{2} \circ \Mosc \circ \vactoosc$.
\item Denote by $\mathsf{E}$ the $r\times r$ identity matrix, then
\[
	\Mvactofan+2(r-1)E = \mathsf{blocksum}_{2} \circ \Mfan \circ \vactofan.
\]
\end{enumerate}
\end{lemma}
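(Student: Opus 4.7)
The plan is to reduce both parts to the observation that the $n$-fold iteration of Scheme~\eqref{eq:scheme_vac_pr} on $\vac$ reproduces, row by row, the $2n \times 2n$ promotion matrix of $\vactoosc(\vac)$ (respectively of $\vactofan(\vac)$). This uses a vacillating analog of Lemma~\ref{lemma:fan_osc_promotion_equivalence}: since Lusztig's involution is compatible with the virtualizations $\bboxcrystal \hookrightarrow \cboxcrystal^{\otimes 2}$ and $\bboxcrystal \hookrightarrow \spincrystal^{\otimes 2}$ of Definition~\ref{definition.vactoosc} and Section~\ref{section.vacillating}, so is the crystal commutor, and consequently one application of $\mathsf{pr}_{\bboxcrystal}$ equals two consecutive applications of $\mathsf{pr}_{\cboxcrystal}$ (respectively of $\mathsf{pr}_{\spincrystal}$) after virtualizing. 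The argument of Lemma~\ref{lemma:fan_osc_promotion_equivalence} then transfers verbatim, and the squares of Scheme~\eqref{eq:scheme_vac_pr} are exactly two vertically stacked Lenart squares.

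Given this identification, part (i) follows immediately: applying filling rule~\eqref{eq:fillingosc} to this common $2n \times 2n$ matrix gives $\Mosc(\vactoosc(\vac))$, and $\mathsf{blocksum}_{2}$ yields $\Mvactoosc(\vac)$ by construction. Part (ii) proceeds similarly for the off-diagonal $2\times 2$ blocks, which match directly, so the main obstacle becomes the analysis of the diagonal blocks $B^{(2)}_{i,i}$. By the fan analog of Remark~\ref{remark:diagonal zero}, the two main-diagonal entries of $\Mfan$ inside such a block vanish, so only the cells $(2i-1,2i)$ and $(2i,2i-1)$ contribute.

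To determine the two nonzero fillings we first record a rigidity observation: for any vacillating tableau $\vac'=(\emptyset=\lambda'^0,\dots,\lambda'^n=\emptyset)$ one has $\lambda'^1=\lambda'^{n-1}=\mathbf{e}_1$, since equality with $\emptyset$ is forbidden (it would require all $r$ parts of $\emptyset$ to be positive), and the only way to obtain a partition from $\emptyset$ by adding one box is to place it in the first row. Hence $\vactofan(\vac')$ always begins $\emptyset,\mathbf{1},2\mathbf{e}_1,\dots$ and ends $\dots,2\mathbf{e}_1,\mathbf{1},\emptyset$, irrespective of which promotion iterate of $\vac$ is being considered; this is what allows a uniform computation for every $i$.

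Combining this with the general structural fact that the promotion matrix of any fan places $\emptyset$ on the main diagonal and $\mathbf{1}$ on both adjacent diagonals (because the first and second-to-last entries of any fan must equal $\mathbf{1}$), the four corners of the two relevant cells are pinned down as $(\lambda,\kappa,\nu,\mu)=(\mathbf{1},\emptyset,2\mathbf{e}_1,\mathbf{1})$ for cell $(2i-1,2i)$ and $(\mathbf{1},2\mathbf{e}_1,\emptyset,\mathbf{1})$ for cell $(2i,2i-1)$. In each case $\kappa+\nu-\lambda=(1,-1,\dots,-1)$ has exactly $r-1$ negative entries, so rule~\eqref{eq:fillingfans} assigns the value $r-1$ to each cell, and each diagonal block sums to $2(r-1)$. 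This is precisely the discrepancy $2(r-1)\mathsf{E}$ appearing on the left-hand side of (ii).
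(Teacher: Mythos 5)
Your proof is correct and follows essentially the same route as the paper's: both rest on identifying the rows of the $n$-fold iteration of Scheme~\eqref{eq:scheme_vac_pr} with the $2n$ single promotions of the virtualized tableau, and both locate the entire discrepancy in the two cells $(2i-1,2i)$ and $(2i,2i-1)$ of each diagonal block, whose corners $(\mathbf{1},\emptyset,2\mathbf{e}_1,\mathbf{1})$ give filling $r-1$ exactly as in the paper's computation of $\filling(\mu^1,\emptyset,\mu^2,\hat{\hat{\mu}}^1)$. The only cosmetic differences are that the paper justifies the row identification by directly checking that the two omitted corner squares satisfy the local rule (rather than via the crystal-theoretic transfer of Lemma~\ref{lemma:fan_osc_promotion_equivalence}), and that in part (i) you should still record, as the paper does, that these same corner cells have filling $0$ in the oscillating case so that the zero diagonal of $\Mvactoosc$ is accounted for.
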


\begin{proof}
Let $\vac$ be a vacillating tableau of length $n$ and weight zero and let $X\in\{O,F\}$. Denote by $\mathsf{T}=(\emptyset=\mu^0,\mu^1,\dots,\mu^{2n}
=\emptyset)$ the corresponding oscillating tableau (resp. $r$-fan of Dyck path) to $\vac$ using $\iota_{V\to X}$.

Recall that $\mathsf{M}_{V\to X}$ is defined using the Schema~\eqref{eq:schema_vac_pr} to calculate promotion. Let 
$\hat{\hat{\mu}}^1,\dots,\hat{\hat{\mu}}^{2n-1}$ be the partitions in the middle row in of this schema.

Note that we have $\mu^2= \hat\mu^{2n-2} = 2\mathbf{e}_1$ and
\[
	\mu^1= \hat{\hat{\mu}}^1 = \hat{\hat{\mu}}^{2n-1} = \hat{{\mu}}^{2n-1} 
	= \begin{cases}\mathbf{e}_1&\text{if $X=O$,}\\\mathbf{1}&\text{if $X=F$.}\end{cases}
\]
It is easy to see that the squares
\[\begin{tikzpicture}[scale=0.7]
\draw (0,0) rectangle (2,2);
\node at (0,2)[anchor=south west]{$\mu^1$};
\node at (2,2)[anchor=south west]{$\mu^2$};
\node at (0,0)[anchor=south west]{$\emptyset$};
\node at (2,0)[anchor=south west]{$\hat{\hat{\mu}}^1$};
\end{tikzpicture} \qquad \raisebox{0.6cm}{\text{and}} \qquad
\begin{tikzpicture}[scale=0.7]
\draw (0,0) rectangle (2,2);
\node at (0,2)[anchor=south west]{$\hat{\hat{\mu}}^{2n-1}$};
\node at (2,2)[anchor=south west]{$\emptyset$};
\node at (0,0)[anchor=south west]{$\hat\mu^{2n-2}$};
\node at (2,0)[anchor=south west]{$\hat\mu^{2n-1}$};
\end{tikzpicture}
\]
satisfy the local rule and
\[
	\filling(\mu^1,\emptyset,\mu^2,\hat{\hat{\mu}}^1)=\filling(\hat{\hat{\mu}}^{2n-1},\hat\mu^{2n-2},\emptyset,\hat\mu^{2n-1})
	= \begin{cases}0&\text{if $X=O$,}\\ r-1&\text{if $X=F$.}\end{cases}
\]
Thus we have
\[
\mathsf{pr}_X(\iota_{V\to X}(\vac)) =  (\emptyset,\hat{\hat{\mu}}_1,\dots,\hat{\hat{\mu}}_{2n-1},\emptyset)
\]
and obtain $\mathsf{M}_{V\to X} + \mathbbm{1}_{X=F}\cdot2(r-1)E = \mathsf{blocksum}_{2} \circ \mathsf{M}_{X} \circ \iota_{V\to X}$.
\end{proof}

The following relates the growth diagrams for $\vactoosc(\vac)$ and $\vactofan(\vac)$.

\begin{lemma}
\label{lemma.S diagonal}
Denote by $S$ the $2r \times 2r$ block diagonal matrix consisting of $r$ copies of the block $\begin{bmatrix}0&1\\1&0\end{bmatrix}$ along 
the diagonal and zeros everywhere else. Then
\[
\Gfan \circ \vactofan = \Gosc \circ \vactoosc + (r-1)S.
\]
\end{lemma}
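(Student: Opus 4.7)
The plan is to reduce the lemma to a comparison of promotion matrices via Theorems~\ref{thm:fans-main} and~\ref{thm:osc-main}. Since $\vactofan(\vac)$ is an $r$-fan of Dyck paths of length $2n$ and $\vactoosc(\vac)$ is an oscillating tableau of weight zero and length $2n$, these theorems give $\Gfan(\vactofan(\vac)) = \Mfan(\vactofan(\vac))$ and $\Gosc(\vactoosc(\vac)) = \Mosc(\vactoosc(\vac))$. The lemma therefore reduces to showing that these two $2n \times 2n$ promotion matrices differ by $(r-1)S$.

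The key observation is that $\vactofan(\vac)$ and $\vactoosc(\vac)$ are built from the same vacillating tableau $\vac$, so that they share all even-indexed partitions (both equal $2\lambda^i$), while a direct comparison of Definitions~\ref{definition.vactofan} and~\ref{definition.vactoosc} shows that at each odd position they differ by a vector $\mathbf{1}-\mathbf{e}_j$, where $j$ is the index at which $\lambda^{i-1}$ and $\lambda^i$ disagree (and $j=r$ in the equal case $\lambda^{i-1}=\lambda^i$). Moreover, the filling rule~\eqref{eq:fillingfans} for fans counts every negative entry of $\kappa+\nu-\lambda$, whereas the filling rule~\eqref{eq:fillingosc} for oscillating tableaux only records whether any negative entry appears. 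Since a shift by $\mathbf{1}-\mathbf{e}_j$ adds exactly $r-1$ extra positive entries to the relevant corner partition, this predicts a discrepancy of $r-1$ in the filling counts in the cells of the promotion matrix adjacent to a $\mu^{2i-1}$ node.

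Next, I would compare the two promotion matrices cell by cell, starting with the unpromoted row of Lenart's scheme. In each pair of cells straddling an odd-indexed partition $\mu^{2i-1}$, the fan diagram records an extra $r-1$ negative entries compared to the oscillating diagram. After the cut-and-glue procedure producing the promotion matrix, these discrepancies sit precisely at positions $(2i-1,2i)$ and $(2i,2i-1)$, which is the support of the matrix $S$, giving the claimed identity.

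The main obstacle is to verify that the iterated applications of Lenart's local rule (used $n$ times to fill in the full promotion matrix) do not propagate the difference outside the support of $S$. The essential input is an intertwining property of $\dom_{\fH_r}$ under the $\mathbf{1}-\mathbf{e}_j$ shift on odd-indexed corners: the shift must commute with the local rule in a controlled way, so that the extra $r-1$ negative entries remain confined to the same pair of columns and rows under subsequent promotions. As a consistency check on the whole computation, applying $\mathsf{blocksum}_{2}$ to the conjectured difference $(r-1)S$ yields $2(r-1)E$, which matches exactly the offset between Lemma~\ref{lem:blocksumVac}(i) and (ii) provided the complementary identity $\Mvactofan=\Mvactoosc$ holds; the latter is expected from a parallel blow-up analysis.
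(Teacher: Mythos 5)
Your proposal has a genuine gap at exactly the point you flag as ``the main obstacle.'' Reducing to promotion matrices via Theorems~\ref{thm:fans-main} and~\ref{thm:osc-main} is legitimate (both are proved before this lemma), and your first-row computation correctly locates the $r-1$ discrepancy at the cells straddling the odd-indexed partitions. But the claim that the difference stays confined to positions $(2i-1,2i)$ and $(2i,2i-1)$ under $2n$ iterations of Lenart's local rule is precisely what needs to be proved, and you only assert that the $\mathbf{1}-\mathbf{e}_j$ shift ``must commute with the local rule in a controlled way.'' This is not routine: the rows of the two promotion matrices are $\mathsf{pr}^k_{\cboxcrystal}$ (resp.\ the fan analogue) of the initial sequences, and only the even-indexed rows are again of the form $\iota(\mathsf{pr}^{k/2}(\vac))$ for the respective embedding; the odd-indexed rows are intermediate stages of the two-step scheme~\eqref{eq:scheme_vac_pr}, so the two label arrays are not related entrywise by a fixed shift, and tracking how $\dom_{\fH_r}$ transports the discrepancy through every cell is a substantial unfinished argument. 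Your closing consistency check is also circular as stated, since $\Mvactofan=\Mvactoosc$ is part of Theorem~\ref{thm:vac-main}, whose proof in the paper rests on this very lemma.

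The paper avoids the propagation problem entirely by working with the \emph{growth diagrams} and the backward rules rather than the promotion matrices. One checks directly (in the three cases $\lambda^{i-1}=\lambda^i$, $\lambda^{i-1}\subset\lambda^i$, $\lambda^{i-1}\supset\lambda^i$) that the diagonal and first-subdiagonal labels of the two triangular growth diagrams coincide, that the second-subdiagonal labels coincide, and that the cells adjacent to the odd diagonal entries receive filling $0$ in the oscillating diagram and $r-1$ in the fan diagram. After that, all consecutive corner labels differ by at most one cell, and in that regime the Burge backward rules degenerate to the Fomin backward rules, so the remaining labels and fillings of the two diagrams are literally identical --- no propagation analysis is needed. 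If you want to salvage your route, you would need to prove the confinement statement for the local rule $\mu=\dom_{\fH_r}(\kappa+\nu-\lambda)$; otherwise the growth-diagram argument is the shorter path.
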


\begin{proof}
Let $\vac=(\lambda^0,\dots, \lambda^{n})$ be a vacillating tableau of weight zero.
Denote with $\osc = (\mu^{0},\dots,\mu^{2n}) = \vactoosc(\vac)$ the corresponding oscillating tableaux and denote with $\fan = (\nu^{0},\dots,\nu^{2n}) = 
\vactoosc(F)$ the $r$-fan of Dyck paths.

Consider the portion of the growth diagram for the oscillating tableau involving only $(\mu^{2i-2},\mu^{2i-1},\mu^{2i})$ and the portion of the growth 
diagram for the fan of Dyck paths involving only $(\nu^{2i-2},\nu^{2i-1},\nu^{2i})$ . We label the partitions as follows.

\begin{equation}\label{eq:parts_of_growth_diagrams}
\raisebox{-0.5\height}{\includegraphics[scale=1]{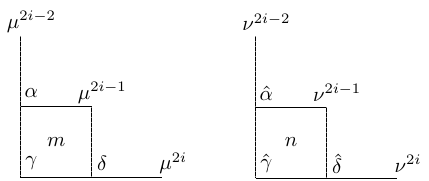}}
\end{equation}

\paragraph{\bf Claim:} We have $\mu^{2i-2}=\nu^{2i-2}$, $\mu^{2i}=\nu^{2i}$, $\alpha=\hat \alpha$, $\gamma=\hat \gamma$, $\delta=\hat \delta$, $m=0$ 
and $n=r-1$. Moreover all partitions on consecutive corners on the lower left border of the diagrams in \eqref{eq:parts_of_growth_diagrams} differ by at 
most one cell.

We consider the three cases $\lambda^{i-1}=\lambda^i$, $\lambda^{i-1}\subset\lambda^i$ and $\lambda^{i-1}\supset\lambda^i$.

By Definition \ref{definition.vactofan}, Construction \ref{construction.growthdiagram}, Definition \ref{definition.vactoosc} and Construction \ref{construction.Burgegrowthrule} we have
\begin{align*}
&\mu^{2i-2}=\nu^{2i-2}=2\lambda^{i-1},&& \mu^{2i}=\nu^{2i}=2\lambda^{i},\\
&\alpha = \mu^{2i-2} \cap \mu^{2i-1},&&\delta = \mu^{2i-1} \cap \mu^{2i},\\
&\hat\alpha = \nu^{2i-2} \cap \nu^{2i-1},&&\hat\delta = \nu^{2i-1} \cap \nu^{2i}.
\end{align*}

\paragraph{\bf Case I} Assume $\lambda^{i-1}=\lambda^i$.
In this case we have $\mu^{2i-1}= 2\lambda^i-\mathbf{e}_r$ and $\nu^{2i-1}= 2\lambda^i+\mathbf{1}-2\mathbf{e}_r$ and get
\begin{align*}
\alpha &= \delta = (2\lambda^i) \cap (2\lambda^i-\mathbf{e}_r) = 2\lambda^i-\mathbf{e}_r,\\
\hat\alpha &= \hat\delta = (2\lambda^i) \cap (2\lambda^i+\mathbf{1}-2\mathbf{e}_r) = 2\lambda^i-\mathbf{e}_r.
\end{align*}
Using the backwards rules for growth diagrams we obtain
\begin{align*}
\gamma = \hat\gamma = 2\lambda^i-\mathbf{e}_r,\quad m=0\quad\text{and}\quad n=r-1.
\end{align*}
\paragraph{\bf Case II} Assume $\lambda^{i-1}\subset\lambda^i$.
In this case we have $\mu^{2i-1}= \lambda^{i-1}+\lambda^i$ and $\nu^{2i-1}= 2\lambda^{i-1}+\mathbf{1}$. Furthermore we obtain
\begin{align*}
\alpha &= (2\lambda^{i-1}) \cap (\lambda^{i-1}+\lambda^i) = 2\lambda^{i-1},\\
\hat\alpha &= (2\lambda^{i-1}) \cap (2\lambda^{i-1}+\mathbf{1}) = 2\lambda^{i-1},\\
\delta &= (\lambda^{i-1}+\lambda^i) \cap (2\lambda^{i}) = \lambda^{i-1}+\lambda^i,\\
\hat\delta &= (2\lambda^{i-1}+\mathbf{1}) \cap (2\lambda^{i}) = \lambda^{i-1}+\lambda^i.
\end{align*}
Using the backwards rules for growth diagrams we obtain
\begin{align*}
\gamma = \hat\gamma = 2\lambda^{i-1},\quad m=0\quad\text{and}\quad n=r-1.
\end{align*}
\paragraph{\bf Case III} Assume $\lambda^{i-1}\supset\lambda^i$. This case is symmetric to Case II.

This proves the claim.

The rest of the growth diagrams must agree, as the Burge growth rules and Fomin growth rules agree in the case where labels on consecutive corners differ by at most one cell.
\end{proof}

Note that Lemma~\ref{lemma.S diagonal} implies
\begin{equation}\label{eq:relation_Gfan_Gosc}
\mathsf{blocksum}_{2} \circ \Gfan \circ \vactofan = \mathsf{blocksum}_{2} \circ \Gosc \circ \vactoosc + 2(r-1)E.
\end{equation}

Now we can prove the second identity of Theorem~\ref{thm:vac-main}.

\begin{proof}
We have
\begin{align*}
\Mvactoosc &= \mathsf{blocksum}_{2} \circ \Mosc \circ \vactoosc&&\text{by Lemma \ref{lem:blocksumVac} (i)}\\
&= \mathsf{blocksum}_{2} \circ \Gosc \circ \vactoosc&&\text{by Theorem \ref{thm:osc-main}}\\
&= \mathsf{blocksum}_{2} \circ \Gfan \circ \vactofan - 2(r-1)E&&\text{by Equation \eqref{eq:relation_Gfan_Gosc}}\\
&= \mathsf{blocksum}_{2} \circ \Mfan \circ \vactofan - 2(r-1)E&&\text{by Theorem \ref{thm:fans-main}}\\
&= \Mvactofan &&\text{by Lemma \ref{lem:blocksumVac} (ii)}.
\end{align*}
\end{proof}

It is possible to invert Lemma~\ref{lem:blocksumVac} (i) as follows.

\begin{lemma}
\label{lemma.blocksum ne}
Let $\vac$ be a vacillating tableau of weight zero with length $n$, and let $(B^{(2)}_{i,j})_{i,j = 1}^{n}$ be the block matrix decomposition
of the $2n \times 2n$ adjacency matrix $\Mosc(\vactoosc \vac)$. Then for all $1\leqslant i \leqslant n$, the nonzero entries
in the matrices
\[
\begin{split}
	[&B^{(2)}_{i,i+1} , B^{(2)}_{i,i+2}, \ldots, B^{(2)}_{i,i+n-1}] \qquad \text{and}\\
	[&B^{(2)}_{i+1,i} , B^{(2)}_{i+2,i}, \ldots, B^{(2)}_{i+n-1,i}]
\end{split}
\]
form a north-east chain. In particular, we have
\[
\blowupNE_2 \circ \Mvactoosc = \Mosc \circ \vactoosc.
\]
\end{lemma}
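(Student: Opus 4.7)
The strategy mirrors the proof of Lemma~\ref{lemma.blocksum se}, but in the vacillating setting one must identify north-east rather than south-east chains, reflecting the passage from Burge rules (vertical strips) to RSK rules (horizontal strips). First, I would exploit the commutativity of promotion with rotation (Proposition~\ref{proposition:rotation_promotion} combined with Proposition~\ref{proposition:rotation_via_toroidal}) together with the compatibility of $\vactoosc$ with promotion established via Lemma~\ref{lem:blocksumVac}(i), to reduce the problem to verifying the chain property for the single row-strip $[B^{(2)}_{n,n+1},\ldots, B^{(2)}_{n,2n-1}]$ and the analogous column-strip $[B^{(2)}_{2,1},\ldots, B^{(2)}_{n,1}]^T$.

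Second, I would invoke Theorem~\ref{thm:osc-main} to replace $\Mosc(\vactoosc(\vac))$ by the Fomin growth diagram $\Gosc(\vactoosc(\vac))$, whose triangular form has diagonal labels $\mu^{0},\mu^{1},\ldots,\mu^{2n}$ and first-subdiagonal labels $\mu^{i}\cap\mu^{i+1}$. The row-strip under consideration corresponds to the two bottom-most rows of this triangular diagram, so the task becomes: within each $2\times 2$ block along the bottom, determine the positions of the $1$'s in the Fomin $0/1$ filling produced by the backward rules.

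Third, I would perform a case analysis based on Definition~\ref{definition.vactoosc}. For a fixed $k$, the triple $\mu^{2k}=2\lambda^{k}$, $\mu^{2k+1}$, $\mu^{2k+2}=2\lambda^{k+1}$ falls into three cases: $\lambda^{k}\subsetneq\lambda^{k+1}$ (both sub-steps add in the same row $a$), $\lambda^{k}\supsetneq\lambda^{k+1}$ (both sub-steps remove in the same row $a$), or $\lambda^{k}=\lambda^{k+1}$ (a removal in row $r$ followed by an addition in row $r$). In each case, propagating the Fomin backward rules from the diagonal labels through the two rows of $2\times 2$ blocks, one checks that within every block and between neighbouring blocks the $1$'s align into a north-east chain. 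The structural reason is that the virtualization $\Psi(a)=a\otimes a$ and $\Psi(\overline{a})=\overline{a}\otimes\overline{a}$ forces consecutive pairs of unit-vector sub-steps to share the same row index, so that when two adjacent pairs meet inside a $2\times 2$ block their $1$'s are forced into an NE (horizontal-strip) pattern rather than an SE (vertical-strip) one; this is the direct analogue of how the fan of Dyck paths version produced SE chains in Lemma~\ref{lemma.blocksum se}.

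Finally, combining the chain property just proved with the identity $\mathsf{blocksum}_{2}\circ \Mosc\circ \vactoosc = \Mvactoosc$ from Lemma~\ref{lem:blocksumVac}(i) and the uniqueness statement of Remark~\ref{remark:blowup-unique} yields $\blowupNE_{2}\circ\Mvactoosc = \Mosc\circ\vactoosc$, completing the proof. The main obstacle is the case analysis in the third step; the ``equal'' case $\lambda^{k}=\lambda^{k+1}$ is the most delicate, since the two sub-steps go in opposite directions (row $r$ down, then row $r$ up), and one must use the vacillating tableau condition that the $r$-th part is strictly positive whenever consecutive partitions coincide, to ensure the local Fomin rule still produces an NE rather than an SE arrangement within the block.
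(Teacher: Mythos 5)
Your first two steps (the reduction to the bottom two rows of the triangular growth diagram via Propositions~\ref{proposition:rotation_via_toroidal} and~\ref{proposition:rotation_promotion}, and the replacement of $\Mosc(\vactoosc(\vac))$ by $\Gosc(\vactoosc(\vac))$ via Theorem~\ref{thm:osc-main}) agree with the paper, and your final step combining the chain property with Lemma~\ref{lem:blocksumVac}(i) and Remark~\ref{remark:blowup-unique} is also what is needed. The gap is in your third step. The chain property must be verified for the filling of the two bottom-most rows of the triangle, but your case analysis is performed on the diagonal triples $\mu^{2k},\mu^{2k+1},\mu^{2k+2}$; the filling of the bottom strip is obtained from the diagonal only after propagating the backward rules through the entire triangle, and the ``structural reason'' you offer (the virtualization $\Psi(a)=a\otimes a$ forcing consecutive sub-steps into the same row) controls only the cells adjacent to the diagonal, not the cells far below it. Moreover, checking ``within every block and between neighbouring blocks'' does not determine the relative (north-east versus south-east) position of two $1$'s lying in distant blocks of the strip, which is exactly what the lemma asserts.

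The paper closes this with one observation your sketch misses: every vacillating tableau begins $\emptyset,\mathbf{e}_1,\dots$, so $\vactoosc(\vac)$ begins $\emptyset,\mathbf{e}_1,2\mathbf{e}_1$, and hence the corner of the growth diagram bounding the bottom two rows (and, symmetrically, the left two columns) is labelled by the one-row partition $(2)$. By Krattenthaler's theorem relating corner labels to chains (\cite[Theorem 2]{Krattenthaler.2006}), this partition records the chain structure of the $0/1$ filling of the entire two-row strip: there are $|\lambda|=2$ ones and the longest north-east chain has length $\lambda_1=2$, so the two $1$'s necessarily form a north-east chain. (Compare Lemma~\ref{lemma.blocksum se}, where the corresponding corner label is the one-column partition $(1^r)$ and the same theorem forces a south-east chain of $r$ ones.) No case analysis is needed; if you insist on a hands-on argument, you would have to formulate and prove an inductive invariant on the labels of the even-indexed corners valid throughout the whole triangle, which your proposal does not supply.
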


\begin{proof}
From Propositions~\ref{proposition:rotation_via_toroidal} and~\ref{proposition:rotation_promotion}, it suffices to prove that the nonzero entries in
$[B^{(2)}_{n,n+1} , B^{(2)}_{n,n+2}, \ldots, B^{(2)}_{n,2n-1}]$
and $[B^{(2)}_{2,1} , B^{(2)}_{3,1}, \ldots, B^{(2)}_{n,1}]^{T}$ form a south-east chain. Recall that by construction, the Fomin growth diagram of
$\vactoosc(\vac)$ is a triangle diagram with the entries of $\vactoosc(\vac)$ labelling its diagonal. As $\vac$ is a vacillating tableau of weight zero, the partition $(2)$  sits at the corners $(2, 2(n-1))$ and $(2(n-1), 2)$ in the Fomin growth diagram of $\vactoosc(\vac)$.
By Theorem~\ref{thm:osc-main}, we have $\Mosc(\vactoosc (\vac)) =\Gosc(\vactoosc (\vac))$. This implies that the filling of the first $2$
columns and first $2$ rows match $\Mosc(\vactoosc (\vac))$. As all the entries of $\Mosc(\vactoosc (\vac))$ are either $0$ or $1$,
we have that all the nonzero entries in the first $2$ rows and the first $2$ rows form a north-east chain by~\cite[Theorem 2]{Krattenthaler.2006}.
\end{proof}

We can now prove the first part of Theorem \ref{thm:vac-main}.

\begin{proof}
Putting together the current results we obtain:
\begin{align*}
\blowupNE_2 \circ \Mvactoosc &= \Mosc \circ \vactoosc&&\text{by Lemma \ref{lemma.blocksum ne}}\\
&= \Gosc \circ \vactoosc &&\text{by Theorem \ref{thm:osc-main}.}
\end{align*}
It thus remains to show: $\Gvac = \mathsf{blocksum}_2 \circ \Gosc \circ \vactoosc$.
Let $\vac$ be a fixed vacillating tableau of weight zero and length $n$. Let $\osc = \vactoosc(\vac)$.
Let $M = (m_{i,j})_{1\leqslant i,j\leqslant 2n} = \Gosc(\osc)$ and let $B_{i,j}^{(2)}$ be its block matrix decomposition. Let $\alpha_{i,j}$ for 
$0\leqslant j \leqslant i \leqslant 2n$ be the partition in the $i$-th row and $j$-th column in the growth diagram of $\osc$.
Above calculation shows that the nonzero entries in the matrices
\[
\begin{split}
	[&B^{(2)}_{i,i+1} , B^{(2)}_{i,i+2}, \ldots, B^{(2)}_{i,i+n-1}] \qquad \text{and}\\
	[&B^{(2)}_{i+1,i} , B^{(2)}_{i+2,i}, \ldots, B^{(2)}_{i+n-1,i}]
\end{split}
\]
form north-east chains.

Thus the squares
\[\begin{tikzpicture}[scale=0.8]
\draw (0,0) rectangle (2,2);
\node at (0,2)[anchor=south west]{$\alpha_{2i,2j}$};
\node at (2,2)[anchor=south west]{$\alpha_{2i,2(j+1)}$};
\node at (0,0)[anchor=south west]{$\alpha_{2(i+1),2j}$};
\node at (2,0)[anchor=south west]{$\alpha_{2(i+1),2(j+1)}$};
\end{tikzpicture}\]
with entry $m_{2i,2j}+m_{2i+1,2j}+m_{2i,2j+1}+m_{2i+1,2j+1}$
satisfy the rules RSK F0-F2 and RSK B0-B2.
As in proof of Lemma~\ref{lemma.S diagonal}, the entries of the first subdiagonal of $M$ are zero. Hence $M$ is uniquely determined by the labels 
$\alpha{2i,2i}$ and $\alpha_{2i,2i+1}$.
Again by proof of Lemma \ref{lemma.S diagonal} we have $\alpha_{2i,2i}=2\lambda^i$ and $\alpha_{2i,2i+1}=(2\lambda^i)\cup (2\lambda^{i+1})$.
As these partitions agree with the labels in Construction \ref{construction.RSKgrowthrule}, we get $\Gvac(\vac) = \mathsf{blocksum}_2(\Gosc(\osc))$.
\end{proof}

\begin{problem}
Find a characterization of the image of the injective maps $\Mfan$, $\Mvactoosc$ and $\Mvactofan$.
\end{problem}

\begin{remark}
For $\Mosc$ the solution to the above problem is known (see~\cite{PfannererRubeyWestbury.2020}). The set of $r$-symplectic 
oscillating tableaux of weight zero are in bijection with the set of $(r+1)$-noncrossing perfect matchings of $\{1,2, \ldots, n\}$.
\end{remark}

%%%%%%%%%%%%%%%%%%%%%%%%%%%%%%%%%%%%%%%%%%%%%%%%%%%%%%%
\subsection{Cyclic sieving}
\label{section.cyclic sieving}

The cyclic sieving phenomenon was introduced by Reiner, Stanton and White~\cite{RSW.2004} as a generalization of
Stembridge's $q=-1$ phenomenon.

\begin{definition}
Let $X$ be a finite set and $C$ be a cyclic group generated by $c$ acting on $X$.
Let $\zeta \in \mathbb{C}$ be a $|C|^{th}$ primitive root of unity and $f(q) \in \mathbb{Z}[q]$ be a polynomial in $q$.
Then the triple $(X,C,f)$ exhibits the \defn{cyclic sieving phenomenon} if for all $d\geqslant 0$ we have that the size of the fixed point set
of $c^d$ (denoted $X^{c^d}$) satisfies $|X^{c^d}| = f(\zeta^d)$.
\end{definition}

In this section, we will state cyclic sieving phenomena for the promotion action on oscillating tableaux, fans of Dyck paths, and 
vacillating tableaux. In Section~\ref{section.energy} we review an approach using the energy function. In Sections~\ref{section.cyclic fans}
and~\ref{section.cyclic vac} we give new cyclic sieving phenomena for fans of Dyck paths and vacillating tableaux, respectively.

%%%%%%%%%%%%%%%%%%%%%%%%%%%%%%%%%%%%%%%%%%%%%%%%%%%%%%%
\subsubsection{Cyclic sieving using the energy function}
\label{section.energy}

We first introduce the energy function on tensor products of crystals. The energy function is defined on affine crystals,
meaning that the crystal $\mathcal{C}_\square$ needs to be upgraded to a crystal of affine Kac--Moody type $C_r^{(1)}$ and the crystals
$\mathcal{B}_\square$ and $\mathcal{B}_{\mathsf{spin}}$ need to be upgraded to crystals of affine Kac--Moody type $B_r^{(1)}$. In particular,
these affine crystals have additional crystals operators $f_0$ and $e_0$. For further details, see for example~\cite{OSS.2003, OS.2008, FOS.2009}.

For an affine crystal $\mathcal{B}$, the \defn{local energy function}
\[
	H \colon \mathcal{B} \otimes \mathcal{B} \to \mathbb{Z}
\]
is defined recursively (up to an overall constant) by
\[
	H(e_i(b_1 \otimes b_2)) = H(b_1 \otimes b_2) + \begin{cases} +1 & \text{if $i=0$ and $\varepsilon_0(b_1)> \varphi_0(b_2)$,}\\
	-1 &\text{if $i=0$ and $\varepsilon_0(b_1)\leqslant \varphi_0(b_2)$,}\\
	0 & \text{otherwise.} \end{cases}
\]
The crystals we consider here are simple, meaning that there exists a dominant weight $\lambda$ such that $\mathcal{B}$ contains a unique
element, denoted $u(\mathcal{B})$, of weight $\lambda$ such that every extremal vector of $\mathcal{B}$ is contained in the Weyl group orbit of $\lambda$.
We normalize $H$ such that
\[
	H(u(\mathcal{B}) \otimes u(\mathcal{B})) = 0.
\]

\begin{example}
The affine crystal $\mathcal{C}_\square^{\mathsf{af}}$ of type $C_r^{(1)}$ is, for example, constructed in~\cite[Theorem 5.7]{FOS.2009}.
The case of type $C_2^{(1)}$ is depicted in Figure~\ref{figure.C affine}.
Using the ordering $1<2<\cdots<r<\overline{r}< \cdots < \overline{2}<\overline{1}$, we have that $H(a\otimes b)=0$ if $a\leqslant b$ and
$H(a\otimes b)=1$ if $a>b$.
\begin{figure}
\scalebox{0.7}{
\begin{tikzpicture}[>=latex,line join=bevel,]
\node (node_0) at (32.0bp,238.0bp) [draw,draw=none] {${\def\lr#1{\multicolumn{1}{|@{\hspace{.6ex}}c@{\hspace{.6ex}}|}{\raisebox{-.3ex}{$#1$}}}\raisebox{-.6ex}{$\begin{array}[b]{*{1}c}\cline{1-1}\lr{1}\\\cline{1-1}\end{array}$}}$};
  \node (node_1) at (8.0bp,162.0bp) [draw,draw=none] {${\def\lr#1{\multicolumn{1}{|@{\hspace{.6ex}}c@{\hspace{.6ex}}|}{\raisebox{-.3ex}{$#1$}}}\raisebox{-.6ex}{$\begin{array}[b]{*{1}c}\cline{1-1}\lr{2}\\\cline{1-1}\end{array}$}}$};
  \node (node_2) at (29.0bp,10.0bp) [draw,draw=none] {${\def\lr#1{\multicolumn{1}{|@{\hspace{.6ex}}c@{\hspace{.6ex}}|}{\raisebox{-.3ex}{$#1$}}}\raisebox{-.6ex}{$\begin{array}[b]{*{1}c}\cline{1-1}\lr{\overline{1}}\\\cline{1-1}\end{array}$}}$};
  \node (node_3) at (9.0bp,86.0bp) [draw,draw=none] {${\def\lr#1{\multicolumn{1}{|@{\hspace{.6ex}}c@{\hspace{.6ex}}|}{\raisebox{-.3ex}{$#1$}}}\raisebox{-.6ex}{$\begin{array}[b]{*{1}c}\cline{1-1}\lr{\overline{2}}\\\cline{1-1}\end{array}$}}$};
  \draw [blue,->] (node_0) ..controls (21.319bp,223.19bp) and (17.348bp,216.53bp)  .. (15.0bp,210.0bp) .. controls (11.794bp,201.08bp) and (10.054bp,190.74bp)  .. (node_1);
  \definecolor{strokecol}{rgb}{0.0,0.0,0.0};
  \pgfsetstrokecolor{strokecol}
  \draw (24.0bp,200.0bp) node {$1$};
  \draw [red,->] (node_1) ..controls (7.0341bp,142.75bp) and (6.5246bp,127.33bp)  .. (7.0bp,114.0bp) .. controls (7.0958bp,111.32bp) and (7.243bp,108.49bp)  .. (node_3);
  \draw (16.0bp,124.0bp) node {$2$};
  \draw [black,<-] (node_0) ..controls (34.849bp,182.43bp) and (37.686bp,103.94bp)  .. (33.0bp,38.0bp) .. controls (32.561bp,31.828bp) and (31.586bp,24.949bp)  .. (node_2);
  \draw (44.0bp,124.0bp) node {$0$};
  \draw [blue,->] (node_3) ..controls (9.7359bp,66.395bp) and (11.106bp,50.861bp)  .. (15.0bp,38.0bp) .. controls (15.946bp,34.876bp) and (17.219bp,31.676bp)  .. (node_2);
  \draw (24.0bp,48.0bp) node {$1$};
\end{tikzpicture}}
\hspace{2cm}
\scalebox{0.7}{
\begin{tikzpicture}[>=latex,line join=bevel,]
\node (node_0) at (23.0bp,238.0bp) [draw,draw=none] {${\def\lr#1{\multicolumn{1}{|@{\hspace{.6ex}}c@{\hspace{.6ex}}|}{\raisebox{-.3ex}{$#1$}}}\raisebox{-.6ex}{$\begin{array}[b]{*{1}c}\cline{1-1}\lr{2}\\\cline{1-1}\end{array}$}}$};
  \node (node_1) at (31.0bp,86.0bp) [draw,draw=none] {${\def\lr#1{\multicolumn{1}{|@{\hspace{.6ex}}c@{\hspace{.6ex}}|}{\raisebox{-.3ex}{$#1$}}}\raisebox{-.6ex}{$\begin{array}[b]{*{1}c}\cline{1-1}\lr{\overline{2}}\\\cline{1-1}\end{array}$}}$};
  \node (node_2) at (8.0bp,10.0bp) [draw,draw=none] {${\def\lr#1{\multicolumn{1}{|@{\hspace{.6ex}}c@{\hspace{.6ex}}|}{\raisebox{-.3ex}{$#1$}}}\raisebox{-.6ex}{$\begin{array}[b]{*{1}c}\cline{1-1}\lr{\overline{1}}\\\cline{1-1}\end{array}$}}$};
  \node (node_3) at (45.0bp,314.0bp) [draw,draw=none] {${\def\lr#1{\multicolumn{1}{|@{\hspace{.6ex}}c@{\hspace{.6ex}}|}{\raisebox{-.3ex}{$#1$}}}\raisebox{-.6ex}{$\begin{array}[b]{*{1}c}\cline{1-1}\lr{1}\\\cline{1-1}\end{array}$}}$};
  \node (node_4) at (27.0bp,162.0bp) [draw,draw=none] {${\def\lr#1{\multicolumn{1}{|@{\hspace{.6ex}}c@{\hspace{.6ex}}|}{\raisebox{-.3ex}{$#1$}}}\raisebox{-.6ex}{$\begin{array}[b]{*{1}c}\cline{1-1}\lr{0}\\\cline{1-1}\end{array}$}}$};
  \draw [red,->] (node_0) ..controls (22.972bp,218.73bp) and (23.15bp,203.31bp)  .. (24.0bp,190.0bp) .. controls (24.172bp,187.31bp) and (24.409bp,184.48bp)  .. (node_4);
  \definecolor{strokecol}{rgb}{0.0,0.0,0.0};
  \pgfsetstrokecolor{strokecol}
  \draw (33.0bp,200.0bp) node {$2$};
  \draw [blue,->] (node_1) ..controls (24.546bp,64.675bp) and (18.483bp,44.639bp)  .. (node_2);
  \draw (31.0bp,48.0bp) node {$1$};
  \draw [black,<-] (node_3) ..controls (57.083bp,257.72bp) and (69.507bp,177.49bp)  .. (50.0bp,114.0bp) .. controls (47.826bp,106.92bp) and (43.189bp,100.03bp)  .. (node_1);
  \draw (69.0bp,200.0bp) node {$0$};
  \draw [black,<-] (node_0) ..controls (15.886bp,205.33bp) and (12.367bp,187.7bp)  .. (10.0bp,172.0bp) .. controls (7.4676bp,155.21bp) and (6.8901bp,150.96bp)  .. (6.0bp,134.0bp) .. controls (3.7655bp,91.428bp) and (6.1839bp,40.532bp)  .. (node_2);
  \draw (15.0bp,124.0bp) node {$0$};
  \draw [blue,->] (node_3) ..controls (37.141bp,299.1bp) and (34.027bp,292.33bp)  .. (32.0bp,286.0bp) .. controls (29.09bp,276.92bp) and (27.0bp,266.55bp)  .. (node_0);
  \draw (41.0bp,276.0bp) node {$1$};
  \draw [red,->] (node_4) ..controls (28.116bp,140.79bp) and (29.156bp,121.03bp)  .. (node_1);
  \draw (38.0bp,124.0bp) node {$2$};
\end{tikzpicture}}
\hspace{2cm}
\scalebox{0.7}{
\begin{tikzpicture}[>=latex,line join=bevel,]
\node (node_0) at (18.0bp,16.0bp) [draw,draw=none] {${\def\lr#1{\multicolumn{1}{|@{\hspace{.6ex}}c@{\hspace{.6ex}}|}{\raisebox{-.3ex}{$#1$}}}\raisebox{-.6ex}{$\begin{array}[b]{*{1}c}\cline{1-1}\lr{-}\\\cline{1-1}\lr{-}\\\cline{1-1}\end{array}$}}$};
  \node (node_1) at (45.0bp,104.0bp) [draw,draw=none] {${\def\lr#1{\multicolumn{1}{|@{\hspace{.6ex}}c@{\hspace{.6ex}}|}{\raisebox{-.3ex}{$#1$}}}\raisebox{-.6ex}{$\begin{array}[b]{*{1}c}\cline{1-1}\lr{+}\\\cline{1-1}\lr{-}\\\cline{1-1}\end{array}$}}$};
  \node (node_2) at (18.0bp,280.0bp) [draw,draw=none] {${\def\lr#1{\multicolumn{1}{|@{\hspace{.6ex}}c@{\hspace{.6ex}}|}{\raisebox{-.3ex}{$#1$}}}\raisebox{-.6ex}{$\begin{array}[b]{*{1}c}\cline{1-1}\lr{+}\\\cline{1-1}\lr{+}\\\cline{1-1}\end{array}$}}$};
  \node (node_3) at (45.0bp,192.0bp) [draw,draw=none] {${\def\lr#1{\multicolumn{1}{|@{\hspace{.6ex}}c@{\hspace{.6ex}}|}{\raisebox{-.3ex}{$#1$}}}\raisebox{-.6ex}{$\begin{array}[b]{*{1}c}\cline{1-1}\lr{-}\\\cline{1-1}\lr{+}\\\cline{1-1}\end{array}$}}$};
  \draw [black,<-] (node_2) ..controls (18.0bp,199.81bp) and (18.0bp,75.469bp)  .. (node_0);
  \definecolor{strokecol}{rgb}{0.0,0.0,0.0};
  \pgfsetstrokecolor{strokecol}
  \draw (27.0bp,148.0bp) node {$0$};
  \draw [red,->] (node_1) ..controls (36.18bp,75.253bp) and (30.431bp,56.517bp)  .. (node_0);
  \draw (42.0bp,60.0bp) node {$2$};
  \draw [red,->] (node_2) ..controls (26.82bp,251.25bp) and (32.569bp,232.52bp)  .. (node_3);
  \draw (42.0bp,236.0bp) node {$2$};
  \draw [blue,->] (node_3) ..controls (45.0bp,163.38bp) and (45.0bp,144.88bp)  .. (node_1);
  \draw (54.0bp,148.0bp) node {$1$};
\end{tikzpicture}}
\caption{Left: Affine crystal $\mathcal{C}_\square^{\mathsf{af}}$ of type $C_2^{(1)}$.
Middle: Affine crystal $\mathcal{B}_\square^{\mathsf{af}}$ of type $B_2^{(1)}$.
Right: Affine crystal $\mathcal{B}_{\mathsf{spin}}^{\mathsf{af}}$ of type $B_2^{(1)}$.
\label{figure.C affine}}
\end{figure}
\end{example}

\begin{example}
The affine crystal $\mathcal{B}_\square^{\mathsf{af}}$ of type $B_r^{(1)}$ is, for example, constructed in~\cite[Theorem 5.1]{FOS.2009}.
The case $B_2^{(1)}$ is depicted in Figure~\ref{figure.C affine}.
Using the ordering $1<2<\cdots<r<0<\overline{r}< \cdots < \overline{2}<\overline{1}$, we have that $H(a\otimes b)=0$ if $a\leqslant b$ and 
$a\otimes b \neq 0\otimes 0$, $H(\overline{1}\otimes 1)=2$, and $H(a\otimes b)=1$ otherwise.
\end{example}

\begin{example}
The affine crystal $\mathcal{B}_{\mathsf{spin}}^{\mathsf{af}}$ of type $B_r^{(1)}$ is constructed in~\cite[Theorem 5.3]{FOS.2009}.
The case $B_2^{(1)}$ is depicted in Figure~\ref{figure.C affine}.
The classical highest weight elements in  $\mathcal{B}_{\mathsf{spin}}^{\mathsf{af}} \otimes \mathcal{B}_{\mathsf{spin}}^{\mathsf{af}}$
are $(\epsilon_1,\ldots,\epsilon_r) \otimes (+,+,\ldots,+)$ with $\epsilon_i=+$ for $1\leqslant i \leqslant k$ and $\epsilon_i=-$ for $k< i\leqslant r$
for some $0\leqslant k \leqslant r$. Denoting by $m(\epsilon_1,\ldots,\epsilon_r)$ the number of $-$ in the $\epsilon_i$, we have
\[
	H((\epsilon_1,\ldots,\epsilon_r) \otimes (+,\ldots,+)) = \Big\lfloor \frac{m(\epsilon_1,\ldots,\epsilon_r)+1}{2} \Big\rfloor.
\]
By definition, the local energy is constant on classical components.
\end{example}

The \defn{energy function}
\[
	E \colon \mathcal{B}^{\otimes n} \to \mathbb{Z}
\]
is defined as follows for $b_1\otimes \cdots \otimes b_n \in \mathcal{B}^{\otimes n}$
\[
	E(b_1 \otimes \cdots \otimes b_n) = \sum_{i=1}^{n-1} i H(b_i \otimes b_{i+1}).
\]
Let us now define a polynomial in $q$ using the energy function for highest weight elements in $\mathcal{B}^{\otimes n}$ of weight zero
\[
	f_{n,r}(q) = q^{c_{n,r}} \sum_{\substack{b \in \mathcal{B}^{\otimes n}\\ \mathsf{wt}(b)=0\\ e_i(b)=0 \text{ for } 1\leqslant i \leqslant r}} q^{E(b)},
\]
where $r$ is the rank of the type of the underlying root system and $c_{n,r}$ is a constant depending on the type. Namely,
\[
	c_{n,r} = \begin{cases} 0 & \text{for $\mathcal{B}_\square$ all $r$ and $\mathcal{B}_{\mathsf{spin}}$ for $r \equiv 0,3 \pmod 4$,}\\
	q^{\frac{n}{2}} & \text{for $\mathcal{C}_\square$ all $r$ and $\mathcal{B}_{\mathsf{spin}}$ for $r \equiv 1,2 \pmod 4$.}
	\end{cases}
\]
The following theorem clarifies statements in~\cite{Westbury.2016}.

\begin{theorem}
\label{theorem.CSP}
Let $X$ be the set of highest weight elements in $\mathcal{B}^{\otimes n}$ of weight zero, where the Kirillov--Reshetikhin crystal corresponding to
$\mathcal{B}$ is classically irreducible.
Then $(X,C_{n},f_{n,r}(q))$ exhibits the cyclic sieving phenomenon, where $C_{n}$ is the cyclic group of order $n$ on $n$ tensor factors inherited
from the evaluation modules as in~\cite[Theorem 4.2]{FK.2014}.
%given by the action of promotion $\mathsf{pr} acting on $\mathcal{B}^{\otimes n}$.
\end{theorem}

\begin{proof}
In~\cite[Proof of Theorem 4.2]{FK.2014}, Fontaine and Kamnitzer proved that $(X,C_{n},\widetilde{f}_{n,r}(q))$ exhibits the cyclic sieving phenomenon, 
where $\widetilde{f}_{n,r}(q)$ is a polynomial defined in terms of current algebra actions on Weyl modules of Fourier and Littelmann~\cite{FL.2007}.
These arguments use that the fusion product is independent of the parameters, which was proven by Ardonne and Kedem~\cite{AK.2007}.
When the Kirillov--Reshetikhin crystals are classically irreducible, the cyclic vectors for the evaluation representations are uniquely determined as
the tensor product of classically highest weight elements. 
By~\cite{FSS.2007}, this polynomial is equal to the energy function polynomial up to an overall constant, proving the claim.
\end{proof}

When the crystal $\mathcal{B}$ is minuscule, it was shown by Fontaine and Kamnitzer~\cite{FK.2014} that the cyclic action on $\mathcal{B}^{\otimes n}$
is given by promotion. In particular, for oscillating tableaux and fans of Dyck paths Theorem~\ref{theorem.CSP} gives a cyclic sieving phenomenon with
the promotion action since the corresponding crystals are minuscule. The crystals corresponding to vacillating tableaux are not minuscule.

For the vector representation of type $A$, highest weight elements in the tensor product of weight zero under RSK are in correspondence with 
standard tableaux of rectangular shape. The energy function relates to the major index under correspondence. Hence in this case,
Theorem~\ref{theorem.CSP} relates to results in~\cite{Rhoades.2010}.

Note that the Kirillov--Reshetikhin crystals corresponding to $\mathcal{C}_\square$, $\mathcal{B}_{\mathsf{spin}}$, and $\mathcal{B}_\square$
are classically irreducible, and hence Theorem~\ref{theorem.CSP} gives a cyclic sieving phenomenon for oscillating tableaux, fans of Dyck paths,
and vacillating tableaux.

%%%%%%%%%%%%%%%%%%%%%%%%%%%%%%%%%%%%%%%%%%%%%%%%%%%%%%%
\subsubsection{Cyclic sieving for fans of Dyck paths}
\label{section.cyclic fans}

Recall from Section~\ref{section.r fans} that highest weight elements of weight zero in $\mathcal{B}_{\mathsf{spin}}^{\otimes 2n}$ of type $B_r$
are in bijection with $r$-fans of Dyck paths of length $2n$. Denote by $D_{n}^{(r)}$ the set of all $r$-fans of Dyck paths of length $2n$.
The cardinality of this set is given by $\prod_{1 \leqslant i \leqslant j \leqslant n-1} \frac{i+j+2r}{i+j}$, see~\cite{dSCV.1986,Krattenthaler.1995}.
Define the $q$-analogue of this formula as
\begin{equation}
\label{equation.g}
	g_{n,r}(q) = \prod_{1 \leqslant i \leqslant j \leqslant n-1} \frac{[i+j+2r]_q}{[i+j]_q},
\end{equation}
where $[m]_q = 1+q+q^2+\cdots+q^{m-1}$.

\begin{conjecture}
\label{conjecture.cyclic sieving}
The triple $(D_{n}^{(r)}, C_{2n}, g_{n,r}(q))$ exhibits the cyclic sieving phenomenon, where $C_{2n}$ is the cyclic group of order $2n$ that acts on
$D_{n}^{(r)}$ by applying promotion.
\end{conjecture}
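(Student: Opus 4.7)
The plan is to leverage Theorem~\ref{theorem.CSP}, which already establishes the cyclic sieving phenomenon for $(D_n^{(r)}, C_{2n}, f_{2n,r}(q))$ via the bijection between $D_n^{(r)}$ and highest weight elements of weight zero in $\mathcal{B}_{\mathsf{spin}}^{\otimes 2n}$ (the spin crystal is minuscule, so the hypothesis is met). The conjecture thereby reduces to proving the polynomial congruence $g_{n,r}(q) \equiv f_{2n,r}(q) \pmod{q^{2n}-1}$, equivalently $g_{n,r}(\zeta) = f_{2n,r}(\zeta)$ for every $2n$-th root of unity $\zeta$. The first sanity check is at $q=1$, where both sides evaluate to $|D_n^{(r)}| = \prod_{1\leqslant i\leqslant j\leqslant n-1}\frac{i+j+2r}{i+j}$ by the classical Desainte-Catherine--Viennot/Krattenthaler enumeration.

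For the remaining roots of unity I would pursue two complementary approaches in parallel. The first is representation-theoretic: recognize $g_{n,r}(q)$ as a principal specialization of a symplectic character, or as the graded dimension of an $\mathrm{Sp}(2r)$-invariant subspace of an appropriately $q$-deformed module, and then interpret promotion as the action of a regular Coxeter-type element whose fixed-point count is governed by Springer's theory of regular elements --- in the spirit of Rhoades~\cite{Rhoades.2010} and Fontaine--Kamnitzer~\cite{FK.2014}. The second is bijective: use the growth-diagram description $\Gfan$ from Section~\ref{section.Fomin Burge}, combined with the rotation--promotion compatibility of Proposition~\ref{proposition:rotation_promotion}, to parametrize the fixed points of $\mathsf{pr}^d$ by $d$-symmetric fillings of the triangular growth diagram, and enumerate the latter via a cyclic Lindstr\"om--Gessel--Viennot type argument on families of non-intersecting lattice paths; the product shape of $g_{n,r}(q)$ is exactly what one would expect from such a determinantal computation specialized at $\zeta^d$.

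The main obstacle is bridging the clean product form of $g_{n,r}(q)$ with the energy statistic defining $f_{2n,r}(q)$. While the product strongly resembles a $q$-analogue of the $\mathrm{Sp}(2r)$-invariant dimension formula, the energy polynomial is a priori a weighted sum over crystal elements with no manifest product structure, and matching them requires a delicate computation of the energy generating function, for instance via a Kirillov--Reshetikhin-type expansion for tensor product multiplicities. A cleaner route would be to identify a single representation-theoretic model --- for example through the $R$-matrix action on $V_{\omega_r}^{\otimes 2n}$ with spectral parameter tuned so that the crystal limit recovers the commutor --- in which both polynomials arise as traces of the same group element, so that the congruence modulo $q^{2n}-1$ follows from a character identity. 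The chord-diagram formulation established in this paper, with rotation mirroring promotion, should serve as a useful combinatorial bookkeeping device throughout.
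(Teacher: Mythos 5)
The statement you are asked to prove is stated in the paper as a \emph{conjecture}, and the paper offers no proof of it: the authors only verify computationally that $g_{n,r}(q) = f_{2n,r}(q) \pmod{q^{2n}-1}$ for all $n+r \leqslant 10$. Your opening reduction is exactly the one the paper makes --- since Theorem~\ref{theorem.CSP} gives a cyclic sieving phenomenon for $(D_n^{(r)}, C_{2n}, f_{2n,r}(q))$ (the spin crystal being minuscule), the conjecture is equivalent to the congruence $g_{n,r}(q) \equiv f_{2n,r}(q) \pmod{q^{2n}-1}$, and your $q=1$ check via the Desainte-Catherine--Viennot/Krattenthaler product is correct. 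But everything after that is a research programme, not a proof, and you acknowledge as much: the central obstacle you name (matching the energy generating function, which has no manifest product structure, with the product formula $g_{n,r}$) is precisely the open problem. Neither of your two proposed routes is carried out to the point where any root-of-unity evaluation beyond $\zeta = 1$ is actually established.

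Concretely, the gaps are these. For the Springer-regular-element route, you would need to exhibit a graded vector space whose Hilbert series is $g_{n,r}(q)$ together with an action of a regular element realizing promotion on a basis indexed by $D_n^{(r)}$; no such model is identified, and the paper's own remark about the Serrano--Stump conjecture for $r$-triangulations (where the analogous polynomial appears but with a different cyclic group) signals that this family of CSPs resists the standard Rhoades/Fontaine--Kamnitzer machinery. For the growth-diagram route, Proposition~\ref{proposition:rotation_promotion} does let you parametrize fixed points of $\mathsf{pr}^d$ by $d$-fold rotationally symmetric fillings, but the claimed ``cyclic Lindstr\"om--Gessel--Viennot'' enumeration of such symmetric fillings, and its match with $g_{n,r}(\zeta^d)$, is asserted rather than performed; this is where all the difficulty lives. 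As it stands your proposal establishes nothing beyond what the paper already records (the reduction to the congruence and its truth at $q=1$), so the conjecture remains open under your argument.
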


\begin{example}
We have
\[
	q^{-4} f_{4,2} (q)= g_{2,2}(q) = q^4 + q^2 + 1
\]
and
\[
\begin{split}
	g_{3,2}(q) &= q^{12} + q^{10} + q^9 + 2q^8 + q^7 + 2q^6 + q^5 + 2q^4 + q^3 + q^2 + 1,\\
	q^{-6} f_{6,2}(q) &= q^{10} + q^9 + 2q^8 + q^7 + 3q^6 + q^5 + 2q^4 + q^3 + q^2 + 1.
\end{split}
\]
Note that $g_{3,2}(q) = f_{6,2}(q) \pmod{q^6-1}$.
\end{example}

In general, we conjecture that $g_{n,r}(q) = f_{2n,r}(q) \pmod{q^{2n}-1}$ which has been verified for all $n+r \leqslant 10$.

Note that by~\cite[Theorem 10]{Krat.1995}
\[
	g_{n,r}(q) = \prod_{1 \leqslant i \leqslant j \leqslant n-1} \frac{[i+j+2r]_q}{[i+j]_q}
	= \sum_{\substack{\lambda\\ \lambda_1 \leqslant r}} s_{2\lambda}(q,q^2,\ldots,q^{n-1}).
\]

\begin{remark}
Conjecture~\ref{conjecture.cyclic sieving} is equivalent to~\cite[Conjecture 5.2]{Hopkins.2020a}, \cite[Conjecture 4.28]{Hopkins.2022}, 
and~\cite[Conjecture 5.9]{Hopkins.2020} on plane partitions and root posets.
\end{remark}

\begin{remark}
There is a bijection between $r$-fans of Dyck paths of length $2(n-2r)$ and $r$-triangulations of $n$-gons. A cyclic sieving phenomenon
in this setting was conjectured by Serrano and Stump~\cite{SerranoStump.2012}. Even though the polynomial in this conjectured cyclic sieving
phenomenon is $g_{n-2r,r}$, the cyclic group acting is $C_{2n}$, which is different from our setting.
\end{remark}

%%%%%%%%%%%%%%%%%%%%%%%%%%%%%%%%%%%%%%%%%%%%%%%%%%%%%%%
\subsubsection{Cyclic sieving for vacillating tableaux}
\label{section.cyclic vac}

Before giving our cyclic sieving phenomenon result for vacillating tableaux, we review Jagenteufel's major statistic for vacillating 
tableaux~\cite{Jagenteufel.2020}. As vacillating tableaux are in bijection with highest weight elements of $\bboxcrystal^{\otimes n}$, it suffices to 
define the major statistic on highest weight elements of $\bboxcrystal^{\otimes n}$.

Let $u = u_n \otimes \cdots \otimes u_{2} \otimes u_{1}$ be a highest weight element in $\bboxcrystal^{\otimes n}$ of type $B_{r}$. As before let $<$ 
denote the ordering $1  < 2 < \cdots < r < 0 < \bar{r} < \cdots < \bar{2} < \bar{1}$ on the elements of $\bboxcrystal$. We say that position $i$ is a 
\defn{descent} for $u$ if 
\begin{enumerate}
\item $u_{i+1} > u_{i}$, and 
\item if the suffix $u_{i-1} \otimes \cdots \otimes u_{2} \otimes u_{1}$  has an equal number of $j$'s and $\bar{j}$'s, then 
$u_{i+1} \otimes u_{i} \not = \bar{j} \otimes j$.
\end{enumerate}
Denote the set of descents of $u$ by $\mathsf{Des}(u)$. Define the \defn{major index} of $u$, denoted by $\maj(u)$, as the sum of its descents 
$\sum_{i \in \mathsf{Des}(u)} i$. Let $h_{n,r}(q)$ denote the polynomial in $q$ given by
\begin{equation*}
	h_{n,r}(q) = \sum_{u \in V^{(r)}_{n}} q^{\maj(u)}
\end{equation*}
where $V^{(r)}_{n}$ denotes the set of all highest weight elements of weight zero in $\bboxcrystal^{\otimes n}$ of type $B_{r}$.

From~\cite[Theorem 2.1]{Jagenteufel.2020} and~\cite[Theorem 6.8]{Westbury.2016}, we obtain the following result.

\begin{theorem}
The triple $(V^{(r)}_{n}, C_{n}, h_{n,r}(q))$ exhibits the cyclic sieving phenomenon, where the cyclic group on $n$ elements, $C_{n}$, acts on 
$V^{(r)}_{n}$ by applying promotion.
\end{theorem}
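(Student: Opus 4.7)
The plan is to combine two existing theorems directly: Westbury's Theorem 6.8 in \cite{Westbury.2016}, which establishes a cyclic sieving phenomenon for the promotion action on the set of highest weight elements of weight zero in the tensor product of a crystal (with a polynomial defined via a character-theoretic/fake-degree type formula, coming from Schur--Weyl duality and the long-cycle action of $C_n$), and Jagenteufel's Theorem 2.1 in \cite{Jagenteufel.2020}, which identifies that polynomial combinatorially as $h_{n,r}(q) = \sum_u q^{\maj(u)}$ using his descent statistic on vacillating tableaux.

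First I would translate the problem into representation theory. Under the bijection $u \leftrightarrow \vac$ between highest weight elements of weight zero in $\bboxcrystal^{\otimes n}$ of type $B_r$ and vacillating tableaux of length $n$ and weight zero (recorded in Section~\ref{section.vacillating}), the set $V_n^{(r)}$ indexes a basis of the invariant subspace of $V^{\otimes n}$, where $V$ is the vector representation of type $B_r$. The promotion action defined here via the crystal commutor of Henriques--Kamnitzer corresponds to the action of the long cycle in $\mathfrak{S}_n$ on this invariant subspace; this is the content of Westbury's identification \cite{Westbury.2016}, applicable here because $\bboxcrystal$ is a crystal of a genuine quantum group representation.

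Second, I would apply Westbury's Theorem 6.8 to the setting $\mathcal{B} = \bboxcrystal$ to obtain that the triple $(V_n^{(r)}, C_n, P(q))$ exhibits the cyclic sieving phenomenon, where $P(q)$ is the polynomial that Westbury writes in terms of the character (equivalently, via Fourier--Littelmann current algebra actions, as in \cite{FL.2007, FSS.2007}, or via the fake-degree formula relevant in type $B_r$ acting on the zero weight space of $V^{\otimes n}$). Third, I would invoke Jagenteufel's Theorem 2.1, which gives an explicit combinatorial evaluation of exactly this polynomial as $h_{n,r}(q) = \sum_{u \in V_n^{(r)}} q^{\maj(u)}$. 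Concatenating the two identifications proves the theorem.

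The only real obstacle is a bookkeeping one: aligning conventions between the three sources. I would carefully check (i) that Jagenteufel's vacillating-tableau model for the odd orthogonal group matches Definition in Section~\ref{section.vacillating} above (both parameterize highest weight elements of weight zero in $\bboxcrystal^{\otimes n}$ of type $B_r$), (ii) that his long-cycle/promotion convention matches the one fixed via the crystal commutor in Section~\ref{section.vacillating} (taking into account the remark distinguishing promotion from its inverse), and (iii) that the polynomial appearing in Westbury's statement is literally the one Jagenteufel computes as $h_{n,r}(q)$ rather than a rotation of it by a root of unity. Once these conventions are reconciled, no further argument is required.
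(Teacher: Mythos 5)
Your proposal matches the paper's argument exactly: the paper derives this theorem directly from Jagenteufel's Theorem 2.1 and Westbury's Theorem 6.8, just as you do, with the convention-checking you describe left implicit. No further comment is needed.
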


Using the descent-preserving bijection in~\cite{Jagenteufel.2020}, we obtain another interpretation of $h_{n,r}(q)$ in terms of standard Young 
tableaux. Adopting the notation and terminology of~\cite{Stanley.EC2} for standard Young tableaux, we say that $i$ is a descent for the standard 
Young tableau $T$ if $i+1$ sits in a lower row than $i$ in $T$ in English notation. Given this, we analogously define $\maj(T)$ to be the sum of the 
descents of $T$. Letting $\SYT(\lambda)$ denote the set of all standard Young tableaux of shape $\lambda$, the polynomial $h_{n,r}(q)$ can be 
reinterpreted as follows.

\begin{theorem} \cite{Jagenteufel.2020}
Let $n, r \geqslant 1$.  Then
\begin{equation*}
	h_{n,r}(q) = \sum_{T \in \SYT(\lambda)} q^{\maj(T)},
\end{equation*}
where $\lambda$ ranges over all partitions of $n$ with only even parts and length at most $2r+1$ when $n$ is even and
$\lambda$ ranges over all partitions of $n$ with only odd parts and length exactly $2r+1$ when $n$ is odd.
\end{theorem}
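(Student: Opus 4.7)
The theorem is a restatement, via Jagenteufel's descent-preserving bijection, of his analogue of the Robinson--Schensted--Knuth correspondence for vacillating tableaux of weight zero of type $B_r$. The plan is to invoke this bijection directly. Writing $\phi$ for the bijection from $V_n^{(r)}$ to $\bigsqcup_\lambda \SYT(\lambda)$ (with $\lambda$ ranging over the shapes in the statement) constructed in~\cite{Jagenteufel.2020}, once I have $\maj(u) = \maj(\phi(u))$ for every $u \in V_n^{(r)}$, summing $q^{\maj}$ on both sides yields
\[
	h_{n,r}(q) = \sum_{u \in V_n^{(r)}} q^{\maj(u)} = \sum_{u \in V_n^{(r)}} q^{\maj(\phi(u))} = \sum_\lambda \sum_{T \in \SYT(\lambda)} q^{\maj(T)},
\]
which is the desired identity.

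To describe $\phi$ concretely, I would recall Jagenteufel's insertion algorithm for type $B$. Scanning the letters $u_n, u_{n-1}, \ldots, u_1$ in turn, one maintains a growing standard tableau and performs ordinary row-insertion on unbarred letters, a jeu-de-taquin-style deletion for barred letters, and a special parity-switching move whenever the letter $0$ is read. The highest weight condition keeps every intermediate shape within at most $r$ rows, while the weight-zero condition forces the final tableau to be standard on $[n]$. The parity restrictions on the output shape (all even parts with length at most $2r+1$ when $n$ is even; all odd parts with length exactly $2r+1$ when $n$ is odd) then fall out of a bookkeeping invariant tracking the number of times the letter $0$ has been read together with the number of matched barred/unbarred pairs.

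The main obstacle is the descent-preservation identity $\maj(u) = \maj(\phi(u))$. The descent set $\mathsf{Des}(u)$ is defined using the total ordering $1 < 2 < \cdots < r < 0 < \bar r < \cdots < \bar 1$, modified by the suffix condition that excludes adjacent pairs $u_{i+1} \otimes u_i = \bar j \otimes j$ in which the preceding suffix already has equal numbers of $j$'s and $\bar j$'s. The technical heart of Jagenteufel's argument is to show that this excluded-pair condition is exactly the condition under which the cells labelled $i$ and $i+1$ in $\phi(u)$ end up in the same row of the output tableau (so that $i$ is not a descent of $\phi(u)$ in the classical sense). This is verified by a case analysis tracing the effect on the tableau of consecutively inserting $u_i$ and $u_{i+1}$ in each of the possible configurations (unbarred/unbarred, unbarred/barred, barred/barred, and those involving $0$), checking in each case that the relative row position of the two newly affected cells correctly reflects whether $i \in \mathsf{Des}(u)$. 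Granting this compatibility, the displayed chain of equalities finishes the proof.
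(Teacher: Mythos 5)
Your proposal is correct and takes essentially the same route as the paper, which offers no argument beyond citing Jagenteufel's descent-preserving bijection between the weight-zero highest weight elements of $\bboxcrystal^{\otimes n}$ and standard Young tableaux of the stated shapes, whence the $\maj$-generating functions coincide. The additional detail you sketch about the insertion algorithm and the case analysis for descent preservation is all deferred to~\cite{Jagenteufel.2020}, exactly as in the paper.
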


\begin{example}
We have 
\begin{align*}
f_{7, 2}(q) &= q^{22} + q^{21} + q^{20} + q^{19} + 2q^{18} + 2q^{17} + 2q^{16} + q^{15} + 2q^{14} + q^{13} + q^{12}\\
h_{7,2}(q) &= q^{18} + q^{17} + 2q^{16} + 2q^{15} + 3q^{14} + 2q^{13} + 2q^{12} + q^{11} + q^{10}
\end{align*}
Note that $f_{7,2}(q) = h_{7,2}(q) \pmod{q^7-1}$.
\end{example}

%%%%%%%%%%%%%%%%%%%%%%%%%%%%%%%%%%%%%%%%%%%%%%%%%%%%%
\bibliography{growthdiagrams_for_fans}{}
\bibliographystyle{amsalpha}

\end{document}